\tikzset{>= angle 60}
\newtheorem{theorem}{Theorem}
\newtheorem{proposition}[theorem] {Proposition}
\newtheorem{definition}{Definition}
\newtheorem*{corollary}  {Corollary}
\newtheorem{lemma}  {Lemma}[section]
\def\R{{\mathbb R}}
\def\Z{{\mathbb Z}}
\def\O{{\mathcal O}}
\def\o{\mathrm o}
\def\lf{\lfloor}
\def\rf{\rfloor}
\def\lc{\lceil}
\def\rc{\rceil}
\begin{document}

\title{Dynamics of 2-interval piecewise affine maps and Hecke-Mahler series}

\maketitle

\centerline{ Michel Laurent and  Arnaldo Nogueira}

\footnote{\footnote \rm 2010 {\it Mathematics Subject Classification:}   
11J91,   37E05. }


\marginsize{2.5cm}{2.5cm}{1cm}{2cm}
  \begin{abstract}
  Let $f : [0,1)\rightarrow [0,1)$ be a $2$-interval piecewise affine increasing  map which is injective but not surjective  (see Figure 1).  Such a map $f$ has a rotation number and can be parametrized by three real numbers. We make fully explicit the dynamics of $f$ thanks to two specific functions  $\delta$ and $\phi$  depending on these parameters  whose definitions involve Hecke-Mahler series. As an application, we show that the rotation number of $f$ is  rational, whenever the three parameters are all algebraic numbers, extending thus the main result of \cite{LN} dealing with the particular case of $2$-interval piecewise affine contractions with constant slope.
  \end{abstract}

\section{Introduction}\label{sec:intro}


\begin{definition} 
Let $I=[0,1)$ be the unit interval. Let $\lambda, \mu, \delta$ be three real numbers. Assume 
 $$
 0< \lambda  <1, \, \mu >0,  \, 1-\lambda < \delta <d_{\lambda,\mu} : = \begin{cases}    1 \quad  if \quad  \lambda \mu <1, 
\\
 {\mu - \lambda\mu  \over \mu-1},     \quad   if  \quad \lambda \mu \ge 1.
\end{cases} 
 $$
 Set $\eta = {1-\delta\over \lambda}$ and  define  a map $f=f_{\lambda,\mu, \delta}: I \to I$  by the splited formula
$$
f(x) =  \begin{cases} \lambda x + \delta,    \quad  if \quad 0\le x < \eta , 
\\
\mu(\lambda x + \delta -1),     \quad   if  \quad \eta \le x <1.
\end{cases}
$$
\end{definition}

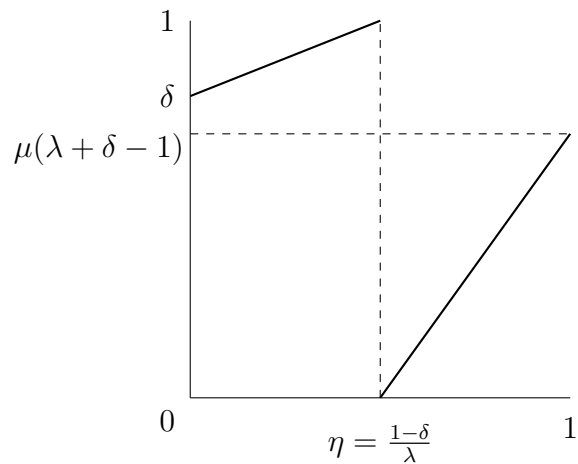
\begin{figure}[ht] 
\begin{tikzpicture}
\draw (0,0) -- (5,0);
\draw (0,0) -- (0,5);
\draw[thick]   (0,4) -- (2.5,5);
\draw[thick]  (2.5,0) -- (5,3.5);
\node   at (-0.3,-0.3) {$0$};
\node at (2.5,-0.6) {$\eta = \frac{1-\delta}{\lambda}$};
\node   at (-0.3,5) {$1$};
\node   at (-0.3,4) {$\delta$};
\draw[dashed]   (0,3.5) -- (5,3.5);
\node   at (-1.2,3.3) {$\mu(\lambda + \delta -1)$};
\node   at (5,-0.4) {$1$};
\draw[dashed]  (2.5,0) -- (2.5,5);
\end{tikzpicture}
\caption{ A plot of $f_{\lambda, \mu, \delta}$}
\end{figure}

The restrictions of $f$ to the  intervals $[0,\eta)$ and $[\eta,1)$  are  increasing affine functions with slopes $\lambda$ and $\lambda\mu$,  respectively.  The symmetry $(x,y)\mapsto (1-x,1-y)$ on $[0,1]^2$  exchanges the  two slopes. Our assumption that the first segment of the graph has  a slope $\lambda$  less than $1$ is thus unrestrictive. Observe that $1-\lambda < d_{\lambda,\mu} \le 1$ and that the bound $\delta \le d_{\lambda,\mu}$ yields the injectivity of $f $.  Indeed the inequality
  $$
\delta > \mu(\lambda + \delta -1)
  $$
   holds true for any $\delta$ in the interval $1-\lambda < \delta < 1$ when $\lambda \mu <1$ ($f$ is then a piecewise contracting map) and is equivalent to 
   $$
   \delta <  {\mu-\lambda\mu\over \mu-1} = d_{\lambda,\mu},
   $$
    when $ \lambda \mu \ge 1$ (see Figure 1). Notice  that in
   the limit case $\delta =d_{\lambda,\mu}$ with $\lambda \mu >1$,  $f=f_{\lambda,\mu, \delta}$ becomes a bijection which was studied in  \cite{Bo}.

We are concerned with the dynamics of the   family of interval maps $f=f_{\lambda,\mu, \delta}$.
We plan to relate their dynamics to the so-called Hecke-Mahler series in two variables $\lambda$ and $\mu$  (see Section 2 for definitions).
The paper \cite{LN} deals with the case $\mu =1$ where the slope is constant. Although the map $f=f_{\lambda,\mu, \delta}$ is not necessarily a piecewise contraction, we extend part of the results established in \cite{LN} to a $2$-slope setting.

 The dynamics of interval piecewise affine contractions  has been studied by many authors, amongst others \cite{Br, Bu, BuC, Co, DH, FC, Ha, NS,NP,NPR}. 
According to   \cite{RT},  every   map $f=f_{\lambda,\mu, \delta}$ has a rotation number  $\rho = \rho_{\lambda,\mu, \delta}$, $0<\rho<1$. Although $f$ is not necessarily a piecewise contraction,
 we will prove that if $ \rho $ takes an irrational value,  then the closure $\overline{C}$ of  the limit set  $C:= \cap_{k\ge 1}f^k(I)$ of $f$  is a Cantor set  and $f$ is topologically conjugated to the rotation map $x \in I \mapsto x + \rho   \mod 1$ on C. When the  rotation number is rational,  the map $f$ has at most one periodic orbit (exactly one in most cases)  and the limit set $C$ equals the periodic orbit when it does exist. More precisely, either $f$ or $f^-$ (a slight modification of the map $f$ whose definition is postponed to Section 8) has a periodic cycle.

 We make the above mentioned qualitative results fully explicit thanks to  formulae involving Hecke-Mahler series. 
Our approach is based on the study of a conjugation function $\phi$ which may be  written down in terms of Hecke-Mahler series. The method was already performed in the special case $\mu=1$ where the two slopes are equal. It is motivated by Coutinho's thesis \cite{Co} and has been recently reworked in \cite{BoSa,JaOb,LN}. The general case involving two different slopes  is quite similar.  Theorem 1 gives  the value of the rotation number $\rho_{\lambda,\mu,\delta}$ in terms of the values of the three parameters $\lambda,\mu,\delta$, while Theorem 3 describes the behaviour of  the orbits of $f$ and their relations with the conjugation $\phi$.

 Next we introduce some standard notations. 
 For any real function $f(x)$ of the real variable $x$, we denote by 
 $$
 f(x^-) = \lim_{y\nearrow x} f(y) \quad {\rm and}\quad  f(x^+) = \lim_{y\searrow x} f(y),
 $$
respectively, the left limit and the right limit  of $f$ at $x$, whenever these limits do exist. As usual $\lf x \rf$ and $\lc x \rc$ stand, respectively, for the {\it integer floor} and the {\it integer ceiling} of the real number $x$. In particular, we have $\lc x \rc = \lf x \rf +1$ for any real number $x\notin \Z$  and $\lc x \rc = \lf x \rf $ when  $x\in \Z$. We denote by $\{ x \}= x -\lf x \rf $ the {\it fractional part} of $x$. The length of an interval $J \subset \R$ is denoted by $| J |$.

We first define a real function $\delta(\lambda ,\mu ,\rho)$ as follows. 

\begin{definition} For positive real numbers  $\lambda, \mu$ and $\rho$ such that $\lambda\mu^\rho < 1$, set
$$
\sigma=  \sigma(\lambda,\mu,\rho):=\sum_{k\ge 1}\left(  \left\lfloor (k+1) \rho \right\rfloor   -  \left\lfloor k \rho \right\rfloor   \right)\lambda^{k} \mu^{\lfloor k \rho\rfloor  }
$$
and
$$
\delta(\lambda,\mu, \rho)={(1-\lambda) (1+ \mu \sigma)\over 1+(\mu-1)\sigma}.
$$
For real numbers $\lambda$ and $\mu$ with $0< \lambda <1$ and $\mu >0$, set
$$
r_{\lambda,\mu} =
 \begin{cases}    1 \quad  if \quad  \lambda \mu <1, 
\\
 {-\log\lambda  \over \log\mu},     \quad   if  \quad \lambda \mu \ge 1.
\end{cases} 
$$

\end{definition}

\begin{figure}[ht]
\begin{tikzpicture}
\node at (0,5)
{\includegraphics[width=.33\textwidth]{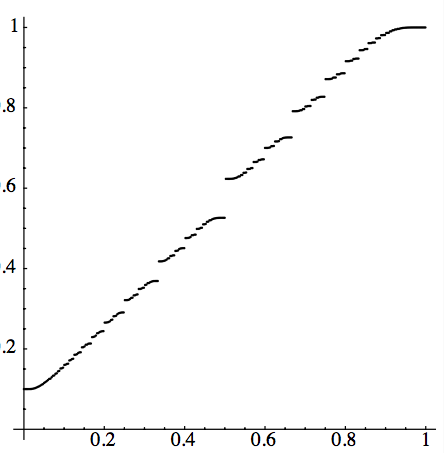}} ;
\end{tikzpicture}
\caption{Plot of the map $\rho \mapsto \delta(0.9,0.8, \rho)$ }
\end{figure}

The series $\sigma(\lambda,\mu,\rho)$ converges when $0 \le \rho < r_{\lambda,\mu}$. 
 For fixed $\lambda$ and $\mu$ with $0 < \lambda<1, \mu >0 $, the map $\rho \mapsto \delta(\lambda,\mu,\rho)$ is increasing in the interval $0\le \rho <r_{\lambda,\mu}$ and it has a left discontinuity at each rational value (see Figure 2). It is continuous for any irrational $\rho$  and right continuous everywhere. 
 The function $\delta$ enables us to compute the rotation number of $f_{\lambda,\mu,\delta}$ thanks to the

\begin{theorem} 
Let $\lambda$ and $\mu$ be real numbers with $0<\lambda<1$ and $\mu >0$. Then the application   $\delta \mapsto \rho_{\lambda,\mu,  \delta}$
is a continuous non decreasing function sending the interval $(1-\lambda, d_{\lambda,\mu})$ onto the interval $(0, r_{\lambda,\mu})$ and satisfying the following properties: 
\\
(i) Let $p/q$ be a rational number with $ 0 < p/q < r_{\lambda,\mu}$  where $p$ and $q$  are relatively prime integers. Then   $  \rho_{\lambda,\mu,\delta}$ takes the value $p/q $ if, and only if, $\delta$ is located in the interval
$$
 \delta\left(\lambda,\mu,(p/ q)^-\right) \le \delta \le \delta\left(\lambda,\mu , p/ q\right)
$$
with the explicit formulae
$$
\begin{aligned}
\delta\left(\lambda,\mu , p/ q\right) &= { (1-\lambda)(1 +\mu S + \lambda^{q-1}\mu^p(1-\lambda)) \over 1 +(\mu-1)S + \lambda^{q-1}\mu^{p-1}(\mu-\lambda\mu-1) }
\\
\delta\left(\lambda,\mu,(p/ q)^-\right) & = 
{(1-\lambda) (1+ \mu S)\over 1 +(\mu -1)S - \lambda^q\mu^{p-1}}
\end{aligned}
$$
where
$$
S  = S(\lambda, \mu, p/q) := \sum_{k=1}^{q-2}  \left( \left\lfloor (k+1) p/q \right\rfloor   -  \left\lfloor k p/q \right\rfloor   \right)\lambda^{k}\mu^{\lfloor kp/q\rfloor  }
$$
and the  sum $S$ equals $0$ when $q=2$.
\\
(ii) For every irrational number $\rho$ with $0 < \rho < r_{\lambda,\mu}$, there exists one and only one 
real number $\delta$ such that $1-\lambda <\delta < d_{\lambda,\mu}$ and   $\rho_{\lambda,\mu, \delta}= \rho$
 which is given by 
$$
\delta=\delta(\lambda,\mu, \rho).
$$
\end{theorem}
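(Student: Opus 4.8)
The plan is to build the map $\delta \mapsto \rho_{\lambda,\mu,\delta}$ by inverting the function $\rho\mapsto\delta(\lambda,\mu,\rho)$, using the monotonicity and continuity properties of $\delta$ already recorded in the excerpt together with a direct verification that $\delta(\lambda,\mu,\rho)$ is precisely the threshold value of the parameter at which the rotation number reaches $\rho$. The first step is to establish the structural dictionary between the parameters and the combinatorics of orbits: for fixed $\lambda,\mu$, I would show that $\delta\mapsto\rho_{\lambda,\mu,\delta}$ is non-decreasing. This follows because increasing $\delta$ moves the graph of $f$ upward (pointwise), and the rotation number of a circle-type map is monotone in the map; one must be careful that $f$ is only defined on $[0,1)$ and has a discontinuity at $\eta=(1-\delta)/\lambda$, so I would pass to the associated degree-one circle map lift used in \cite{RT} and invoke monotonicity of rotation number there. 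Continuity of $\delta\mapsto\rho_{\lambda,\mu,\delta}$ comes from the fact that rotation number depends continuously on the map in the relevant topology, plus the absence of intervals of constancy at irrational levels.

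The second and main step is the computation identifying the rational plateaus. Here I would use the conjugation function $\phi$ (Theorem 3) or, more elementarily, the explicit periodic-orbit analysis: when $\rho_{\lambda,\mu,\delta}=p/q$ with $\gcd(p,q)=1$, the map $f$ has a periodic orbit of combinatorial rotation type $p/q$, meaning the itinerary of a periodic point follows the Sturmian pattern with slope $p/q$ (the symbol at step $k$ being determined by $\lfloor(k+1)p/q\rfloor-\lfloor kp/q\rfloor$). Writing out the condition that the orbit of length $q$ closes up, i.e. $f^q(x_0)=x_0$ for the appropriate $x_0$, produces a linear equation in $\delta$ whose solution is exactly the stated formula for $\delta(\lambda,\mu,p/q)$; the lower endpoint $\delta(\lambda,\mu,(p/q)^-)$ arises from the dual itinerary (the symbol at step $k$ governed by $\lceil(k+1)p/q\rceil-\lceil kp/q\rceil$), which is the boundary case where the periodic orbit degenerates on the other side — this is the modified map $f^-$ alluded to in the introduction. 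The key computation is that the product-telescoping of the affine pieces along a Sturmian word of slope $p/q$ collapses: $f^q$ restricted to the relevant sub-interval is affine with slope $\lambda^q\mu^p$, and the constant term is $(1-\lambda)(1+\mu S)$ up to the correction terms $\lambda^{q-1}\mu^{p}(1-\lambda)$ and $\lambda^{q}\mu^{p-1}$ coming from the last step of the orbit. Matching against the finite truncation of $\sigma$ gives the consistency $\delta(\lambda,\mu,(p/q)^-)\le\delta(\lambda,\mu,p/q)$ and shows these are the two one-sided limits of $\rho\mapsto\delta(\lambda,\mu,\rho)$ at $p/q$.

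The third step handles the irrational values: given irrational $\rho\in(0,r_{\lambda,\mu})$, continuity and monotonicity of $\rho\mapsto\delta(\lambda,\mu,\rho)$ at $\rho$ (stated in the excerpt) show that $\delta(\lambda,\mu,\rho)$ is not an endpoint of any plateau, so by the intermediate value theorem applied to the continuous non-decreasing surjection $\delta\mapsto\rho_{\lambda,\mu,\delta}$ there is a unique $\delta$ with $\rho_{\lambda,\mu,\delta}=\rho$, and part (i) forces this $\delta$ to equal $\delta(\lambda,\mu,\rho)$ (otherwise one could squeeze a rational plateau in between). The surjectivity onto $(0,r_{\lambda,\mu})$ follows once we check the boundary behaviour: $\rho_{\lambda,\mu,\delta}\to 0$ as $\delta\to(1-\lambda)^+$ because then $\eta\to 1^-$ and $f$ becomes an almost-everywhere contraction toward a fixed point, while $\rho_{\lambda,\mu,\delta}\to r_{\lambda,\mu}$ as $\delta\to d_{\lambda,\mu}^-$, matching the known rotation number of the Boshernitzan-type bijection in the limit \cite{Bo}.

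I expect the main obstacle to be the bookkeeping in the second step: correctly identifying which Sturmian word governs the periodic orbit, tracking the two distinct endpoints (the role of $f$ versus $f^-$), and verifying that the telescoped affine composition yields exactly the rational-function formulae stated — in particular checking that the denominators $1+(\mu-1)S+\lambda^{q-1}\mu^{p-1}(\mu-\lambda\mu-1)$ and $1+(\mu-1)S-\lambda^q\mu^{p-1}$ are positive under the standing hypotheses, so that $\delta$ stays in the admissible range $(1-\lambda,d_{\lambda,\mu})$. The rest is a fairly standard rotation-number-monotonicity argument.
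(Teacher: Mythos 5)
Your overall architecture (monotonicity and continuity of $\delta\mapsto\rho_{\lambda,\mu,\delta}$ via the lift, explicit location of the rational plateaus by a periodic-orbit computation, then a squeezing argument at irrational values) is a viable alternative to the paper's route, which instead reads the rotation number directly off the conjugacy $F\circ\phi=\phi\circ R_\rho$ of Lemma 4.2 and obtains the two endpoint formulae by pure algebra, via $\sigma=(S+\lambda^{q-1}\mu^{p-1})/(1-\lambda^q\mu^p)$ and $\sigma^-=(S+\lambda^{q}\mu^{p-1})/(1-\lambda^q\mu^p)$ from Lemma 3.2. However, your central step has a genuine gap. The closing-up condition $f^q(x_0)=x_0$ along a Sturmian itinerary cannot ``produce a linear equation in $\delta$ whose solution is $\delta(\lambda,\mu,p/q)$'': a $p/q$-cycle exists for \emph{every} $\delta$ in the plateau $\delta(\lambda,\mu,(p/q)^-)\le\delta<\delta(\lambda,\mu,p/q)$ (Theorem 3(ii)), so for a fixed admissible itinerary the map $f^q$ is affine with slope $\lambda^q\mu^p<1$ on a subinterval and the closure equation determines the periodic point $x_0$ as a function of $\delta$, not $\delta$ itself. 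The endpoints are instead characterized by degeneracy conditions: the cycle contains $0$ (equivalently $f^q(0)=0$, equivalently $\eta$ lies on the cycle), which happens exactly at $\delta=\delta\left(\lambda,\mu,(p/q)^-\right)$, while the cycle of the left-limit map $f^-$ contains $1$ (i.e. $(f^-)^q(1)=1$), which happens exactly at $\delta=\delta\left(\lambda,\mu,p/q\right)$. A quick check with $\mu=1$, $p/q=1/2$ confirms this: $f^2(0)=0$ gives $\delta=1/(1+\lambda)=\delta\left(\lambda,1,(1/2)^-\right)$, whereas $(f^-)^2(1)=1$ gives $\delta=(1+\lambda-\lambda^2)/(1+\lambda)=\delta(\lambda,1,1/2)$. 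So besides being underdetermined as stated, your attribution is swapped: $f^-$ governs the upper endpoint, not the lower one. To repair the step you must anchor the two closure equations at $0$ and at $1$ respectively, show that for $\delta$ strictly between these two values a $p/q$-cycle (hence rotation number $p/q$) still exists, e.g. by verifying that the fixed point of the affine map $f^q$ stays in the correct subinterval precisely when the corresponding inequalities on $\delta$ hold, and justify the ``only if'' direction, namely that rotation number $p/q$ forces a periodic orbit of $f$ or of $f^-$ with the prescribed combinatorics; this last point is asserted but not proved, and it fails for $f$ alone at $\delta=\delta(\lambda,\mu,p/q)$ (Section 8). The paper avoids all of this by the tiling of the parameter interval $(1-\lambda,d_{\lambda,\mu})$ into the range of the strictly increasing map $\rho\mapsto\delta(\lambda,\mu,\rho)$ and its jump intervals, each piece being assigned its rotation number through Lemma 4.2, with continuity in $\delta$ (from \cite{RT2}) invoked only at the right endpoints.

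Your first and third steps are sound in outline: pointwise monotonicity of the lift $F_{\lambda,\mu,\delta}$ in $\delta$ does give monotonicity of the rotation number, continuity is exactly the cited Rhodes--Thompson result, and once part (i) is known with \emph{both} closed endpoints attained, the squeeze $\sup_{p/q<\rho}\delta(\lambda,\mu,p/q)=\delta(\lambda,\mu,\rho)=\inf_{p'/q'>\rho}\delta\left(\lambda,\mu,(p'/q')^-\right)$ (using continuity of $\rho\mapsto\delta(\lambda,\mu,\rho)$ at irrationals, Corollary of Lemma 2.1) does identify the unique parameter in part (ii). But note that this identification leans on the full strength of part (i), including the value at $\delta=\delta(\lambda,\mu,p/q)$ itself, so the repair of step 2 is not optional bookkeeping: it is where the theorem actually lives.
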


 Roughly speaking, the two maps $\rho \mapsto \delta(\lambda,\mu,\rho)$ and $\delta \mapsto \rho_{\lambda,\mu,\delta}$ are ``inverse'' from each other, meaning that their  graphs are symmetric with respect to the main diagonal.  In  the special case $\mu =1$, we recover the formulae obtained in \cite{LN} for the map $f_{\lambda,1,\delta}$, which coincides with the contracted rotation $x\mapsto \{\lambda x + \delta\}$.  
 Notice that the formulae of our Theorem 1 are consistent with those of Theorem 4.15 in \cite{Br},  dealing with the subfamily of contractions $f_{\lambda, \mu,\delta}$ with $\lambda\mu <1$, although the formulations greatly differ.

Applying now a classical transcendence result,  which is stated as  Theorem 4 below,   to the number $\delta(\lambda,\mu,\rho)$,
  we  deduce from the assertion (ii) of Theorem 1 the following:

\begin{theorem} 
Let $\lambda, \mu, \delta$ be algebraic real numbers with $0<\lambda <1$ $ \mu >0$ and   $1-\lambda <\delta<d_{\lambda,\mu}$. Then,  the rotation number $\rho_{\lambda,\mu, \delta}$ takes a rational value.
\end{theorem}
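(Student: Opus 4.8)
The plan is to deduce Theorem 2 from assertion (ii) of Theorem 1 by contraposition, invoking the transcendence result stated later as Theorem 4. Suppose, for contradiction, that $\lambda,\mu,\delta$ are algebraic with $0<\lambda<1$, $\mu>0$, $1-\lambda<\delta<d_{\lambda,\mu}$, but that the rotation number $\rho:=\rho_{\lambda,\mu,\delta}$ is irrational. By Theorem 1(ii), since $0<\rho<r_{\lambda,\mu}$, we must have $\delta=\delta(\lambda,\mu,\rho)$, where
$$
\delta(\lambda,\mu,\rho)=\frac{(1-\lambda)(1+\mu\sigma)}{1+(\mu-1)\sigma},\qquad
\sigma=\sigma(\lambda,\mu,\rho)=\sum_{k\ge 1}\bigl(\lfloor(k+1)\rho\rfloor-\lfloor k\rho\rfloor\bigr)\lambda^k\mu^{\lfloor k\rho\rfloor}.
$$

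First I would solve this relation for $\sigma$: it is a Möbius (linear-fractional) expression in $\sigma$ with coefficients that are polynomials in the algebraic numbers $\lambda,\mu$, so $\sigma=\sigma(\lambda,\mu,\rho)$ is itself an algebraic number. Explicitly, $\delta\bigl(1+(\mu-1)\sigma\bigr)=(1-\lambda)(1+\mu\sigma)$ gives $\sigma\bigl(\delta(\mu-1)-(1-\lambda)\mu\bigr)=(1-\lambda)-\delta$, hence $\sigma\in\overline{\Q}$ (one checks the denominator is nonzero: if $\delta(\mu-1)=(1-\lambda)\mu$ then also $(1-\lambda)=\delta$, contradicting $\delta>1-\lambda$). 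Thus the Hecke–Mahler–type series $\sigma(\lambda,\mu,\rho)$ takes an algebraic value at the algebraic point $(\lambda,\mu)$.

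Next I would recognize $\sigma(\lambda,\mu,\rho)$ as (up to elementary manipulation) a Hecke–Mahler series in the two variables $\lambda,\mu$ associated with the irrational slope $\rho$, so that the hypotheses of the transcendence theorem (Theorem 4, to be invoked here) are met: $\lambda,\mu$ algebraic with $0<\lambda<1$ and the relevant convergence condition $\lambda\mu^\rho<1$ holding (which is exactly $\rho<r_{\lambda,\mu}$, guaranteed since $\rho$ lies in the range produced by Theorem 1). That theorem asserts such a series takes a transcendental value whenever $\rho$ is irrational and the arguments are algebraic in the domain of convergence. This contradicts $\sigma\in\overline{\Q}$, so $\rho$ cannot be irrational; being a rotation number it lies in $(0,1)$, hence it is rational, proving Theorem 2.

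The only genuine subtlety is the bookkeeping needed to match $\sigma(\lambda,\mu,\rho)$ to the precise normal form of Hecke–Mahler series covered by Theorem 4 — for instance rewriting the coefficient $\lfloor(k+1)\rho\rfloor-\lfloor k\rho\rfloor\in\{0,1\}$ as an indicator and performing an Abel summation or a change of summation index to pass between $\sum_k\lambda^k\mu^{\lfloor k\rho\rfloor}$-type sums and the classical form $\sum_{m,n}$ over lattice points under a line of slope $\rho$ — together with checking that the convergence condition transcribes correctly as $\lambda\mu^\rho<1$. Everything else is the elementary algebra of inverting the Möbius relation and the one-line contraposition; the transcendence input is entirely outsourced to Theorem 4.
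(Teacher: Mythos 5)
Your argument is correct and follows essentially the same route as the paper: Theorem 1(ii) forces $\delta=\delta(\lambda,\mu,\rho)$ for an irrational $\rho$, the homographic (M\"obius) relation between $\delta$ and $\sigma$ (equivalently $\Psi_\rho(\lambda,\mu)$, via the identity $\sigma=\frac{1-\lambda}{\lambda\mu}\Psi_\rho(\lambda,\mu)$ of Lemma 2.1, which is exactly the ``bookkeeping'' you defer) transfers algebraicity, and the Loxton--van der Poorten transcendence theorem (Theorem 4) gives the contradiction. Your explicit check that the M\"obius inversion is non-degenerate is a small refinement the paper leaves implicit; otherwise the proofs coincide.
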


Notice that  Theorem 2 no longer holds for  the value $\delta= d_{\lambda,\mu}$  when $\lambda\mu >1$.  Indeed, $d_{\lambda,\mu} = { \mu -\lambda\mu\over \mu -1}$ is an algebraic number when $\lambda$ and $\mu$ are algebraic, while $f$ has rotation number $-\log\lambda/\log\mu$ by \cite{Bo}. This ratio
is a transcendental number when $\lambda$ and $\mu $ are non-zero algebraic numbers, unless $\lambda$ and $\mu$ are multiplicatively dependent.

We now investigate the behaviour of the iterates of $f= f_{\lambda,\mu,\delta}$ thanks to an explicit conjugation map $\phi$.

\begin{definition} Let $\lambda,\mu,\rho$ be three positive real numbers such that $\lambda\mu^\rho <1$, and let $\delta$ be an arbitrary real number.  Let 
 $\phi_{\lambda,\mu,\delta, \rho}  : \R \to \R$ be the real function defined   by the convergent series
$$
\phi_{\lambda,\mu,\delta, \rho}(y) =  \lfloor y\rfloor   + {1-\delta\over \lambda} + \sum_{k\ge 0}\lambda^k\mu^{\lfloor  y\rfloor   -\lfloor y-k\rho\rfloor  }\left( {\lambda +\delta -1 \over \lambda} + \lfloor y-(k+1)\rho \rfloor   - \lfloor y-k\rho\rfloor  \right).
$$
\end{definition}

\begin{figure}[ht]
\begin{tikzpicture}
\node at (0,5)
{\includegraphics[width=.33\textwidth]{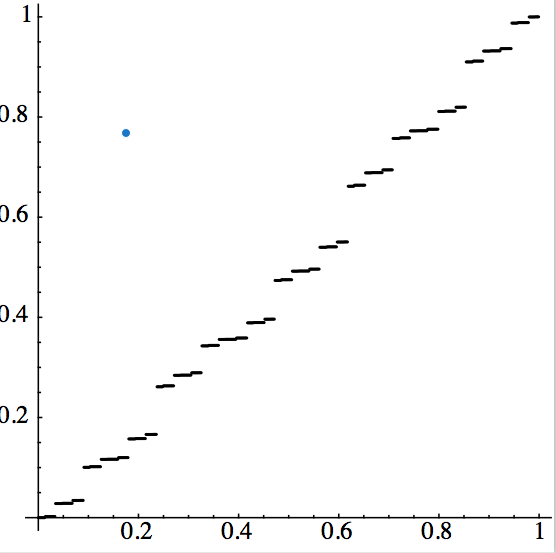}} ;
\end{tikzpicture}
\caption{Plot of the function $\phi_{0.95, 0.9, \delta, (\sqrt{5}-1)/2}$ in the range $0\le y \le1$, where $\delta=\delta(0.95,0.9,(\sqrt{5}-1)/2)=0.6617...$. }
\end{figure}


\eject

\begin{theorem}
Let $\lambda, \mu, \delta$ be three real numbers with
 $0< \lambda  <1$, $\mu >0 $ and  $1-\lambda < \delta <d_{\lambda,\mu}$. 
Set $ \rho= \rho_{\lambda,\mu,\delta}$
 and $\phi=\phi_{\lambda,\mu,\delta,\rho}$.
 \\
(i) Assume that $\rho$ is  irrational.   Then $C = \phi(I)$ and the restriction of $f=f_{\lambda,\mu,\delta}$ to the invariant set $C$ is conjugate by $\phi$ to the rotation $R_\rho : y \mapsto y+ \rho \, \hbox{ mod } \, 1$. In other words, we have the commutative diagramm: 
\begin{equation}
\xymatrix{
I \ar[r]^{R_\rho} \ar[d]  _\phi & I \ar[d]^\phi
\\
C\ar[r]^f & C.
}
\label{diag1}
\end{equation}
Moreover  $\overline{\phi(I)}$ is a Cantor set  and for every $x \in I$, the $\omega$-limit set $$
\omega(x):= \bigcap_{n\rightarrow +\infty}\overline{\bigcup_{k\ge n}f^k(x)}
$$
equals  $ \overline{C}$.
\\
(ii)
Assume that  $\rho ={p/ q}$ is rational, where $p$ and $q$ are relatively prime,  and that 
\begin{equation}
 \delta\left(\lambda,\mu,(p/ q)^-\right) \le \delta < \delta\left(\lambda,\mu , p/q\right).
 \label{eq12}
\end{equation}
Then 
$$
C = \phi(I) =\left\{ \phi({m/ q})\,  ; 0 \le m \le q-1\right\}
$$ 
is a cycle of order $q$ and we have the commutative diagramm: 
\begin{equation}
\xymatrix{
\left\{ {m\over q}\,  ; 0 \le m \le q-1\right\}
 \ar[r]^{R_{p/q}} \ar[d]  _\phi & \left\{ {m\over q}\,  ; 0 \le m \le q-1\right\}
 \ar[d]^\phi
\\
C\ar[r]^f & C
}
\label{diag2}
\end{equation}
where $R_{p/q}$ denotes the rotation $ y \mapsto y+ {p\over q} \, \hbox{ mod } \, 1$. 
Moreover, for every $x \in I$, the $\omega$-limit set $\omega(x)$ equals  $ C$.
\\
(iii)
When $\delta = \delta\left(\lambda,\mu , {p\over q}\right)$, the limit set 
 $C$ is empty and $   \phi(I)$ is a finite set with $q$ elements containing $1$. For every $x \in I$, the $\omega$-limit set $\omega(x)$
 coincides with  $ \phi(I)$.
\end{theorem}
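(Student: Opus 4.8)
The proof rests on a single functional identity for $\phi=\phi_{\lambda,\mu,\delta,\rho}$, which I would isolate first. Note that the defining series converges, since $\rho=\rho_{\lambda,\mu,\delta}\in(0,r_{\lambda,\mu})$ by Theorem 1 forces $\lambda\mu^{\rho}<1$. Writing $\eta=(1-\delta)/\lambda$ and using the elementary identities $\lambda\eta+\delta=1$ and $(\lambda+\delta-1)/\lambda=1-\eta$, I would verify by reindexing the series that $\phi(y+1)=\phi(y)+1$ for all $y\in\R$, and that for $y\in I$ one has $\phi(y+\rho)=1+\mu^{c}\bigl(\lambda\phi(y)+\delta-1\bigr)$, where $c=\lfloor y+\rho\rfloor\in\{0,1\}$ records whether $R_\rho$ wraps past $1$: the series for $\phi(y+\rho)$ is the shift $k\mapsto k-1$ of that for $\lambda\phi(y)+\delta$, up to the overall factor $\mu^{c}$ on the tail. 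Thus $c=0$ gives $\phi(\{y+\rho\})=\lambda\phi(y)+\delta$ and $c=1$ gives $\phi(\{y+\rho\})=\phi(y+\rho)-1=\mu(\lambda\phi(y)+\delta-1)$. For these to be exactly the two branches of $f$ I need in addition that $\phi$ is non-decreasing on $I$ with $\phi(y)<\eta\iff y<1-\rho$, i.e. the rotation wraps precisely when $\phi(y)$ lands in $[\eta,1)$; monotonicity I would obtain by checking the jumps of the series are non-negative (or by exhibiting $\phi$ as an increasing limit of the rational-$\rho$ step functions), and pinning the threshold to $1-\rho$ comes down to the inequality $\phi((1-\rho)^-)\le\eta\le\phi(1-\rho)$, which via the explicit formulae of Theorem 1 reduces to $\delta=\delta(\lambda,\mu,\rho)$ in case (i) and to \eqref{eq12} in case (ii). This makes the diagrams \eqref{diag1} and \eqref{diag2} commute.

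Granting the semiconjugacy, the inclusion $\phi(I)\subseteq C$ is formal: $R_\rho(I)=I$ and $f\circ\phi=\phi\circ R_\rho$ give $f(\phi(I))=\phi(I)$, hence $\phi(I)=f^{k}(\phi(I))\subseteq f^{k}(I)$ for every $k$, so $\phi(I)\subseteq C$ whenever $\phi(I)\subseteq I$ (cases (i) and (ii)). The reverse inclusion is the crux. Here I would use the explicit shape of the limit set: with $J:=I\setminus f(I)=[\mu(\lambda+\delta-1),\delta)$ the ``gap'' (nonempty by the injectivity inequality of the introduction), injectivity of $f$ yields $f^{k}(I)=I\setminus\bigsqcup_{j=0}^{k-1}f^{j}(J)$, whence $C=I\setminus\bigsqcup_{j\ge0}f^{j}(J)$. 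It then remains to show that the countably many jump discontinuities of the monotone function $\phi$ lie exactly over the components $f^{j}(J)$, so that $\phi(I)$ omits no point of $C$. This matching is, I expect, the main obstacle of the whole argument; I would carry it out symbolically, by showing each $z\in C$ has a well-defined $f$-itinerary relative to $\eta$, that this itinerary is the Sturmian sequence of an $R_\rho$-orbit, and that $\phi$ evaluated at the corresponding angle returns $z$.

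For part (i), with $\rho$ irrational, I would then feed in the classical structure of the irrational rotation. Density of $R_\rho$-orbits in $I$, transported through the monotone $\phi$, makes every $f$-orbit inside $C$ dense in $\overline{C}=\overline{\phi(I)}$; $\overline{C}$ is perfect (rotation orbits have no isolated points) and nowhere dense (it avoids each $f^{j}(J)$, and the $f^{j}(J)$, being the $\phi$-images of a dense $R_\rho$-orbit, are dense in $\overline C$), hence a Cantor set. Finally $\omega(x)=\overline{C}$ for every $x\in I$, not merely for $x\in C$: the inclusion $\omega(x)\subseteq\bigcap_k\overline{f^k(I)}=\overline C$ is immediate from $f^{n}(x)\in f^{k}(I)$ for $n\ge k$, and the reverse holds because the orbit of any $x$ is eventually trapped between consecutive points of the $\phi$-image of an $R_\rho$-orbit and so inherits its density, the finitely-transient portion of the orbit before it enters $C$ being irrelevant to $\omega(x)$.

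When $\rho=p/q$ with $\gcd(p,q)=1$, the floors $\lfloor y-k\rho\rfloor$ are locally constant off $\tfrac1q\Z$, so $\phi$ is a step function and $\phi(I)=\{\phi(m/q):0\le m\le q-1\}$ has at most $q$ elements. Under \eqref{eq12} the semiconjugacy turns the $R_{p/q}$-cycle $\{m/q\}$ into an $f$-invariant set of exactly $q$ points — the length is $q$ because $\phi$ is injective on this cycle, which is what the strict inequality $\delta<\delta(\lambda,\mu,p/q)$ ensures — so $\phi(I)$ is a genuine $q$-cycle; together with $\phi(I)\subseteq C$ and the fact (recalled in the introduction) that a rational rotation number forces $C$ to reduce to the unique periodic orbit, this yields $C=\phi(I)$ and diagram \eqref{diag2}, and $\omega(x)=C$ is then the convergence of every orbit to that cycle. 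Case (iii), $\delta=\delta(\lambda,\mu,p/q)$, is the degenerate endpoint: the step-function count still gives $q$ values, but a direct evaluation of $\phi$ at this exact $\delta$ (using the closed form for $\delta(\lambda,\mu,p/q)$ in Theorem 1) shows one of them equals $1\notin I$, so $\phi(I)$ is a $q$-cycle of the left-continuous modification $f^-$ rather than of $f$; correspondingly the forward iterates of $J$ now exhaust $I$, giving $C=\bigcap_k f^k(I)=\emptyset$, while $\omega(x)=\phi(I)$ follows exactly as in case (ii) by convergence to this generalized cycle.
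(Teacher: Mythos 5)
Your architecture does track the paper's: the functional equation $F(\phi(y))=\phi(y+\rho)$ coming from the series, the threshold ``$\phi(y)<\eta$ iff $y<1-\rho$'' pinned by the values of $\phi$ at $0$ and $1-\rho$ (this is exactly what Lemmas 3.1 and 3.2 encode, and it does reduce to $\delta=\delta(\lambda,\mu,\rho)$ resp.\ \eqref{eq12}), and the formal inclusion $\phi(I)\subseteq C$. But the two load-bearing steps are not carried out. The reverse inclusion $C\subseteq\phi(I)$ in the irrational case, which you yourself call the main obstacle, is left as a plan whose key claim --- that every $z\in C$ has a backward itinerary which is Sturmian of slope $\rho$, with $\phi$ of the corresponding angle returning $z$ --- is itself a nontrivial unproved statement. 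The paper never needs it: since $\phi(0)=0$ (Lemma 3.1), one has $F(0)=\delta=\phi(\rho)$ and $F(0^-)=\mu(\lambda+\delta-1)=\phi(\rho^-)$, so the gap $J=I\setminus f(I)$ is precisely the jump $[\phi(\rho^-),\phi(\rho))$ of $\phi$; because $\eta=\phi(1-\rho)\in\phi(I)$, $f$ is continuous on each gap, and your own conjugation identity, applied also to left limits, gives by induction $f^{l-1}(J)=[\phi(\{l\rho\}^-),\phi(\{l\rho\}))$; monotone right continuity then identifies $I$ minus the gaps with $\phi(I)$ (Proposition 5). So the matching you propose to do symbolically is a short corollary of your first paragraph extended to left limits --- but as written it is missing. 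Also, your phrase about the ``finitely-transient portion of the orbit before it enters $C$'' is wrong: an orbit starting in a gap never enters $C$; what is true, and what the paper proves via Lemma 4.1 in the form $x_k=\phi(y+k\rho)+\lambda^k\mu^{\lfloor y+k\rho\rfloor}(x-\xi_l)$, is exponential shadowing of the orbit of $\xi_l$, which needs the gap widths $\lambda^{l-1}\mu^{\lfloor l\rho\rfloor}\bigl(\delta-\mu(\lambda+\delta-1)\bigr)\to0$; you never supply this estimate, and it is also what makes ``trapped between consecutive points'' yield $\omega(x)=\overline{C}$.

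The rational case is circular: you invoke ``the fact (recalled in the introduction) that a rational rotation number forces $C$ to reduce to the unique periodic orbit'', but that sentence of the introduction is an announcement of Theorem 3(ii) itself, not a prior result --- it is exactly what must be proved. The paper proves it in Section 7 (Proposition 6) by decomposing $f^n(I)$ into $q$ circular intervals anchored at the cycle points $\zeta_m=\phi(m/q)$ and establishing $|f^{n+q}(I)|=\lambda^q\mu^p\,|f^n(I)|$ with $\lambda^q\mu^p<1$ (from $p/q<r_{\lambda,\mu}$); this single estimate yields both $C=\{\zeta_0,\dots,\zeta_{q-1}\}$ and $\omega(x)=C$ for every $x\in I$. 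Note $f$ need not be a contraction when $\lambda\mu\ge1$, so ``convergence of every orbit to the cycle'' is genuinely not automatic. The same gap propagates to your case (iii): ``the forward iterates of $J$ now exhaust $I$'' is merely a restatement of $C=\emptyset$, asserted rather than proved, and ``$\omega(x)=\phi(I)$ follows exactly as in case (ii)'' leans on the convergence you have not established; the paper handles this endpoint through the left-continuous modifications $f^-,F^-,\phi^-$ of Section 8, showing $\phi(I)$ is an $f^-$-cycle containing $1$, that consequently no $f$-cycle exists, and that every $f$-orbit converges to $\phi(I)$.
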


The paper is organized as follows. In Section 2, we introduce Hecke-Mahler series and relate them to our functions $\delta$ and $\phi$. Then,  Theorem 2 easily follows from Theorem 1.
The purpose of Sections 3 and 4 is to establish the basic conjugation  equations \eqref{diag1} and \eqref{diag2}. This goal is achieved thanks to Lemma 4.2 where    some relations  connecting the parameter $\delta$ with values of the function $\delta(\lambda,\mu,\rho)$ are needed, as for instance the inequalities \eqref{eq12} in the case \eqref{diag2}. 
It turns out that these constraints characterize the rotation number $\rho_{\lambda,\mu,\delta}$. As a consequence of the method, we establish Theorem 1 in Section 5. The next two sections provide additional information on the dynamics of $f$ in the case of an irrational rotation number (Proposition 5 in Section 6), or a rational one (Proposition 6 of Section 7). 
In both cases, we explicitly describe the  iterated images  $f^n(I), \, n\ge 1$.  Finally Section 8 deals with  the exceptional values of the form  $\delta = \delta(\lambda,\mu,p/q)$ for which no periodic cycle exists.

\eject

 \section{Hecke-Mahler series and transcendental  numbers}
 
 \subsection{On Hecke-Mahler series.}
 We  introduce the following sums:
 \begin{definition}
Let  $\lambda$, $\mu$ and $\rho$ be positive real numbers such that $0< \lambda <1, 0< \lambda  \mu^\rho <1$.   We set,  for every real number $x$,
$$
\begin{aligned}
\Psi_\rho(\lambda,\mu) =& \sum_{k\ge 1}\sum_{1\le h \le k\rho}\lambda^k\mu^h,
\\
\Phi_\rho(\lambda, \mu, x) = & \sum_{k\ge 0}\sum_{0 \le l < k \rho + x}\lambda^k\mu^l,
\end{aligned}
$$
with the convention that a sum indexed by an empty set equals zero.
\end{definition}

Notice that $\Psi_\rho(\lambda,\mu)$ is a right continuous function in the variable $\rho$,
while the function $\Phi_\rho(\lambda, \mu, x)$ is left continuous in both variables $\rho$ and $x$.
 Viewed as  power series in the two variables $\lambda$ and $\mu$, these two functions 
 are called Hecke-Mahler series which have been studied  especially from a diophantine point of view \cite{AdDa, Boh, BoBo, Da, Ko, LoVdPA, LoVdPB, Ni, NST}. We relate our functions $\delta(\lambda,\mu,\rho)$ and $\phi_{\lambda,\mu,\delta,\rho}(x)$ respectively to  $\Psi_\rho(\lambda,\mu)$ and $\Phi_\rho(\lambda, \mu, x)$. 
\begin{lemma}
Let $\lambda,\mu, \rho$ be real numbers with $0 < \lambda<1, \mu >0$ and $ 0 < \rho < r_{\lambda,\mu}$, then 
the following equality holds 
$$
\sigma(\lambda,\mu,\rho) = \sum_{k\ge 1}\left(\lfloor (k+1) \rho \rfloor   -  \lfloor k \rho \rfloor \right) \lambda^k \mu^{\lfloor k\rho\rfloor  }
={ 1-\lambda \over \lambda\mu} \Psi_\rho(\lambda,\mu) .
$$
\end{lemma}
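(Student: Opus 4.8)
The plan is to prove the identity by a direct rearrangement of the double series defining $\Psi_\rho(\lambda,\mu)$, exploiting the fact that for each fixed $k$ the number of integers $h$ with $1\le h\le k\rho$ is exactly $\lfloor k\rho\rfloor$, and then summing the inner geometric sum. Concretely, I would first write
\[
\Psi_\rho(\lambda,\mu)=\sum_{k\ge 1}\lambda^k\sum_{h=1}^{\lfloor k\rho\rfloor}\mu^h .
\]
When $\mu\neq 1$ the inner sum is a finite geometric series equal to $\mu\,\dfrac{\mu^{\lfloor k\rho\rfloor}-1}{\mu-1}$; when $\mu=1$ it is simply $\lfloor k\rho\rfloor$, which must be handled as a (trivial) limiting case. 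Substituting gives
\[
\Psi_\rho(\lambda,\mu)=\frac{\mu}{\mu-1}\Bigl(\sum_{k\ge 1}\lambda^k\mu^{\lfloor k\rho\rfloor}-\sum_{k\ge 1}\lambda^k\Bigr)
=\frac{\mu}{\mu-1}\Bigl(\sum_{k\ge 1}\lambda^k\mu^{\lfloor k\rho\rfloor}-\frac{\lambda}{1-\lambda}\Bigr),
\]
all series converging because $0\le\rho<r_{\lambda,\mu}$ forces $\lambda\mu^\rho<1$, hence $\lambda^k\mu^{\lfloor k\rho\rfloor}$ decays geometrically.

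The second ingredient is an Abel-type (summation by parts) manipulation of $\sigma(\lambda,\mu,\rho)$. Writing $a_k=\lambda^k$ and noting that $\lfloor(k+1)\rho\rfloor-\lfloor k\rho\rfloor$ is the "jump" of the sequence $\lfloor k\rho\rfloor$, I would telescope: since $\mu^{\lfloor(k+1)\rho\rfloor}-\mu^{\lfloor k\rho\rfloor}$ is a multiple of $\mu^{\lfloor k\rho\rfloor}$ when the jump is $0$ or $1$ (the jump is always in $\{0,1\}$ because $0<\rho<r_{\lambda,\mu}\le 1$ — this is where one uses $\rho<1$), one has
\[
\bigl(\lfloor(k+1)\rho\rfloor-\lfloor k\rho\rfloor\bigr)\mu^{\lfloor k\rho\rfloor}=\frac{\mu^{\lfloor(k+1)\rho\rfloor}-\mu^{\lfloor k\rho\rfloor}}{\mu-1}
\]
when $\mu\neq 1$, and equals the jump itself when $\mu=1$. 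Therefore
\[
\sigma(\lambda,\mu,\rho)=\frac{1}{\mu-1}\sum_{k\ge 1}\lambda^k\bigl(\mu^{\lfloor(k+1)\rho\rfloor}-\mu^{\lfloor k\rho\rfloor}\bigr),
\]
and a shift of index $k\mapsto k-1$ in the first part, together with $\lfloor\rho\rfloor=0$, collapses this to
\[
\sigma(\lambda,\mu,\rho)=\frac{1}{\mu-1}\Bigl(\tfrac1\lambda-1\Bigr)\sum_{k\ge 1}\lambda^k\mu^{\lfloor k\rho\rfloor}-\frac{1}{\mu-1}\cdot\frac{?}{?},
\]
i.e. a constant multiple of $\sum_{k\ge 1}\lambda^k\mu^{\lfloor k\rho\rfloor}$ plus an explicit geometric correction term. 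Comparing this with the closed form for $\Psi_\rho(\lambda,\mu)$ obtained above yields $\sigma=\frac{1-\lambda}{\lambda\mu}\Psi_\rho(\lambda,\mu)$ after elementary algebra; the factor $\frac{1-\lambda}{\lambda\mu}$ appears precisely from matching the coefficient $(\frac1\lambda-1)=\frac{1-\lambda}{\lambda}$ against the $\frac{\mu}{\mu-1}$ in $\Psi_\rho$.

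The main obstacle I anticipate is bookkeeping rather than conceptual: one must be careful that all index shifts are legitimate (which requires the established convergence, and the convention that empty sums vanish, to justify splitting and reindexing absolutely convergent double series), and one must treat the degenerate case $\mu=1$ separately or by a continuity argument, since the formula $\frac{\mu}{\mu-1}$ is then a $0/0$ expression whose limit is the correct one. A cleaner route that avoids the case distinction — and the one I would ultimately prefer to write up — is to observe directly that both sides, as power series in $\lambda$ and $\mu$, have matching coefficients: the coefficient of $\lambda^k\mu^h$ on the right-hand side of $\frac{1-\lambda}{\lambda\mu}\Psi_\rho(\lambda,\mu)$ counts (with signs) the pairs contributing through the $(1-\lambda)$ factor, and this count equals $\lfloor(k+1)\rho\rfloor-\lfloor k\rho\rfloor$ exactly when $h=\lfloor k\rho\rfloor$ and vanishes otherwise, which is the assertion. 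I would carry out the coefficient comparison after reducing to the geometric-sum identity above, and invoke the convergence hypothesis $\rho<r_{\lambda,\mu}$ only to legitimize the rearrangements.
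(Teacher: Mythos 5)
Your plan is correct, and the algebra does close: with $T=\sum_{k\ge 1}\lambda^k\mu^{\lfloor k\rho\rfloor}$ your two computations give $\Psi_\rho=\frac{\mu}{\mu-1}\bigl(T-\frac{\lambda}{1-\lambda}\bigr)$ and, since $\lfloor\rho\rfloor=0$, the index shift makes your unnamed correction term equal to $\frac{1}{\mu-1}$, so $\sigma=\frac{1-\lambda}{\lambda(\mu-1)}T-\frac{1}{\mu-1}=\frac{1-\lambda}{\lambda\mu}\Psi_\rho$, exactly as claimed. Your route is, however, genuinely different from the paper's. You keep the summation order of the definition, sum the inner geometric series in $\mu$ for each fixed $k$ (using that there are exactly $\lfloor k\rho\rfloor$ admissible values of $h$), convert $\sigma$ into a telescoping sum via the jump identity $(\lfloor(k+1)\rho\rfloor-\lfloor k\rho\rfloor)\mu^{\lfloor k\rho\rfloor}=\frac{\mu^{\lfloor(k+1)\rho\rfloor}-\mu^{\lfloor k\rho\rfloor}}{\mu-1}$, and match the two closed forms in $T$. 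The paper instead reverses the order of summation, sums the geometric series in $\lambda$ for each fixed $h$ to get $\frac{1}{1-\lambda}\sum_{h\ge1}\lambda^{\lceil h/\rho\rceil}\mu^h$, and then reinterprets this as a sum over $k$ by characterizing which integers $k$ occur as $\lceil h/\rho\rceil$ (exactly those with $\lfloor k\rho\rfloor-\lfloor(k-1)\rho\rfloor=1$, and then $h=\lfloor k\rho\rfloor$), which uses $\rho<1$ in the same place you use it for the jump being $0$ or $1$. The trade-off: the paper's counting argument never divides by $\mu-1$, so it needs no case distinction, whereas your version requires treating $\mu=1$ separately; note though that continuity is not actually needed there, since for $\mu=1$ the same index shift applied to $\sigma=\sum_{k\ge1}(\lfloor(k+1)\rho\rfloor-\lfloor k\rho\rfloor)\lambda^k$ and $\Psi_\rho=\sum_{k\ge1}\lfloor k\rho\rfloor\lambda^k$ gives $\sigma=\frac{1-\lambda}{\lambda}\Psi_\rho$ directly. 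Your approach buys a purely mechanical computation with no combinatorial reinterpretation of indices; the paper's buys uniformity in $\mu$ at the cost of the small counting lemma about ceilings. All the rearrangements you invoke are justified by absolute convergence, which follows from $\lambda\mu^{\rho}<1$ as you say.
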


\begin{proof}
Reverting the summation order for the indices $h,k$ involved in $\Psi_\rho(\lambda,\mu)$, we obtain
$$
\Psi_\rho(\lambda,\mu)= \sum_{h\ge 1} \sum_{k\ge {h\over \rho}}\lambda^k\mu^h
= \sum_{h\ge 1} \sum_{k\ge {\left\lc h\over \rho\right\rc }}\lambda^k\mu^h
= {1\over 1-\lambda} \sum_{h\ge 1} \lambda^{\left\lc{h\over \rho}\right\rc }\mu^h .
$$
A positive  integer $k$ is of the form 
$\left\lc{h\over \rho}\right\rc$ for some some positive integer $h$ if and  only if 
$k-1< {h\over \rho} \le k$, or equivalently 
$ (k-1) \rho < h \le k\rho$. There exists at most one integer $h$ in the interval $((k-1)\rho, k\rho]$ whose length is $\rho<r_{\lambda,\mu} <1$. The integer $h$ does exist exactly when $\lf (k-1)\rho\rf  = \lf k\rho\rf -1$ and then $h =\lf k\rho\rf  = \lf (k-1)\rho\rf +1$. Otherwise, $\lf (k-1)\rho\rf   = \lf k\rho\rf  $.
Thus
$$
\begin{aligned}
\sum_{h\ge 1} \lambda^{\left\lc{h\over \rho}\right\rc  }\mu^h =&  \sum_{k\ge 1} 
\left(\lf k\rho \rf   -  \lf (k-1) \rho \rf \right) \lambda^k\mu^{\lf k \rho\rf  } = \lambda \mu \sum_{k\ge 1}\left(\lf k \rho \rf   -  \lf (k -1)\rho \rf \right) \lambda^{k-1}\mu^{\lf (k-1)\rho\rf  } 
\\
=&  \lambda\mu\sum_{k\ge 0}\left(\lf (k +1)\rho \rf   -  \lf k\rho \rf \right) \lambda^k\mu^{\lf k \rho\rf  }
= \lambda\mu\sum_{k\ge 1}\left(\lf (k +1)\rho \rf   -  \lf k\rho \rf \right) \lambda^k\mu^{\lf k \rho\rf  },
\end{aligned}
$$
since $0< \rho <1$.
\end{proof}

We deduce 
\begin{corollary}
Let $0 < \lambda <1 $ and $\mu>0$, then the map $\rho\mapsto \delta(\lambda,\mu,\rho)$ is increasing on the interval $0< \rho<r_{\lambda,\mu}$ and
sends the interval $(0,r_{\lambda,\mu})$ into the interval  $(1-\lambda,d_{\lambda,\mu})$. Moreover,  it is
 right continuous everywhere and continuous at any  irrational point $\rho$. 
 \end{corollary}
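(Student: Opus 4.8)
The plan is to transfer all four assertions — monotonicity, the image inclusion, right continuity, and continuity at irrationals — from the Hecke–Mahler series $\Psi_\rho(\lambda,\mu)$ to $\delta(\lambda,\mu,\rho)$ via the identity of Lemma 2.1, namely $\sigma(\lambda,\mu,\rho) = \frac{1-\lambda}{\lambda\mu}\Psi_\rho(\lambda,\mu)$, together with the definition $\delta(\lambda,\mu,\rho) = \frac{(1-\lambda)(1+\mu\sigma)}{1+(\mu-1)\sigma}$. First I would observe that, for fixed $\lambda,\mu$, the map $\sigma \mapsto \frac{(1-\lambda)(1+\mu\sigma)}{1+(\mu-1)\sigma}$ is a Möbius transformation in $\sigma$; computing its derivative gives numerator $(1-\lambda)\bigl(\mu(1+(\mu-1)\sigma) - (\mu-1)(1+\mu\sigma)\bigr) = (1-\lambda)\cdot 1 > 0$, so it is strictly increasing in $\sigma$ on the range where the denominator $1+(\mu-1)\sigma$ stays positive (which it does: $\sigma \ge 0$, and if $\mu < 1$ one checks $1+(\mu-1)\sigma$ remains positive throughout the admissible range, e.g. because $\sigma$ is bounded — see below). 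Hence it suffices to show $\rho \mapsto \sigma(\lambda,\mu,\rho)$, equivalently $\rho \mapsto \Psi_\rho(\lambda,\mu)$, is increasing, right continuous, and continuous at irrationals, and to track the endpoint values of $\sigma$ to get the image interval.

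For monotonicity of $\Psi_\rho$: from the definition $\Psi_\rho(\lambda,\mu) = \sum_{k\ge 1}\sum_{1\le h\le k\rho}\lambda^k\mu^h$, each inner index set $\{h : 1\le h\le k\rho\}$ grows (as a set) when $\rho$ increases, and every term $\lambda^k\mu^h$ is positive, so $\Psi_\rho$ is non-decreasing in $\rho$; strict monotonicity follows since increasing $\rho$ past the point where some $\lfloor k\rho\rfloor$ jumps adds a strictly positive term. Right continuity in $\rho$ is already noted in the remark after Definition 2.1 and comes from the fact that the floor function $k\mapsto \lfloor k\rho\rfloor$, hence the truncation $\{h\le k\rho\}$, is right continuous in $\rho$; continuity at irrational $\rho$ holds because no $k\rho$ is an integer there, so each inner sum is locally constant and the series converges uniformly on a neighbourhood (using $0<\lambda\mu^\rho<1$, which gives a geometric majorant valid on a small interval of $\rho$-values). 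These are exactly the analytic facts one needs, and they are elementary once the uniform convergence on compact $\rho$-intervals inside $[0, r_{\lambda,\mu})$ is established — which follows from $\lambda\mu^{\rho} < 1$ for $\rho < r_{\lambda,\mu}$ by comparison with $\sum_k k\,(\lambda\mu^{\rho'})^k$ for a slightly larger $\rho'$.

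For the image inclusion: at $\rho = 0$ the series $\sigma$ is empty, so $\sigma = 0$ and $\delta(\lambda,\mu,0) = 1-\lambda$. As $\rho \to r_{\lambda,\mu}^-$, I would compute $\lim \sigma$ and check it yields $d_{\lambda,\mu}$: when $\lambda\mu < 1$ we have $r_{\lambda,\mu} = 1$, and as $\rho \to 1^-$ one has $\lfloor(k+1)\rho\rfloor - \lfloor k\rho\rfloor \to 1$ and $\lfloor k\rho\rfloor \to k-1$ termwise, giving $\sigma \to \sum_{k\ge 1}\lambda^k\mu^{k-1} = \frac{\lambda}{1-\lambda\mu}$, and substituting into the Möbius formula yields $\delta \to 1 = d_{\lambda,\mu}$; when $\lambda\mu \ge 1$ we have $r_{\lambda,\mu} = -\log\lambda/\log\mu$, so $\lambda\mu^{r_{\lambda,\mu}} = 1$ and the series $\sigma$ diverges to $+\infty$ as $\rho \to r_{\lambda,\mu}^-$, whence $\delta(\lambda,\mu,\rho) \to \frac{(1-\lambda)\mu}{\mu-1} = \frac{\mu-\lambda\mu}{\mu-1} = d_{\lambda,\mu}$. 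Combined with strict monotonicity and continuity at irrationals, this shows the image lies in $(1-\lambda, d_{\lambda,\mu})$. The main obstacle is the bookkeeping in this endpoint computation — in particular, justifying the termwise limit of the series as $\rho \to 1^-$ in the contracting case (dominated convergence with the geometric majorant $k(\lambda\mu^{\rho'})^k$ valid near $\rho = 1$ when $\lambda\mu < 1$), and checking in the case $\mu < 1$ that the denominator $1+(\mu-1)\sigma$ never vanishes; for the latter note $\mu < 1$ forces $\lambda\mu < 1$, hence $\sigma \le \frac{\lambda}{1-\lambda\mu}$ is bounded, and $1 + (\mu-1)\sigma \ge 1 - (1-\mu)\frac{\lambda}{1-\lambda\mu} = \frac{1-\lambda\mu - \lambda + \lambda\mu}{1-\lambda\mu} = \frac{1-\lambda}{1-\lambda\mu} > 0$, so the Möbius transformation is well-defined and increasing throughout.
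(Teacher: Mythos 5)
Your proposal is correct and follows essentially the same route as the paper: rewrite $\delta(\lambda,\mu,\rho)$ as a homographic (M\"obius) function of the Hecke--Mahler series via Lemma 2.1, deduce monotonicity from the enlarging index set, compute the endpoint limits ($\sigma\to 0$ as $\rho\to 0^+$, $\sigma\to\lambda/(1-\lambda\mu)$ or $+\infty$ as $\rho\to r_{\lambda,\mu}^-$ according to $\lambda\mu<1$ or $\lambda\mu\ge 1$), and obtain right continuity and continuity at irrationals from the corresponding properties of the floor function. Your explicit check that the denominator $1+(\mu-1)\sigma$ stays positive when $\mu<1$ is a slightly more careful rendering of a point the paper handles implicitly, but the argument is the same.
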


\begin{proof} 
Using Lemma 2.1, we can rewrite $\delta(\lambda, \mu, \rho)$ in the form 
\begin{equation}
\delta(\lambda,\mu, \rho) 
= {(1-\lambda) (1+ \mu \sigma(\lambda,\mu,\rho))\over 1+(\mu-1)\sigma(\lambda,\mu,\rho)}
= \mu (1-\lambda) { \lambda + (1-\lambda)\Psi_\rho(\lambda, \mu)\over \lambda \mu + (1-\lambda)(\mu-1)\Psi_\rho(\lambda,\mu)}.
\label{eqdelta}
\end{equation}
We distinguish two cases whether $\lambda\mu <1$ or not.

When $\lambda \mu <1$, we have $d_{\lambda,\mu} = r_{\lambda,\mu}=1$.  The series $\Psi_\rho(\lambda,\mu)$  converges for any $\rho\in (0,1)$ and  
the map $\rho\mapsto \Psi_\rho(\lambda,\mu)$ is obviously  increasing, since $\Psi_\rho(\lambda,\mu)$ is a sum of  powers of $\lambda$ and $\mu$  and that the set of  summation indices $(h,k)$  enlarges when $\rho$ grows. We easily compute that
$$
\lim_{\rho\searrow 0}\Psi_\rho(\lambda, \mu) = 0 \quad {\rm and \, \, that} \quad
\lim_{\rho\nearrow 1}\Psi_\rho(\lambda, \mu) = {\lambda^2\mu\over (1-\lambda)(1-\lambda\mu)}.
$$
It follows that 
$$
0< \Psi_\rho(\lambda,\mu) < {\lambda^2\mu\over (1-\lambda)(1-\lambda\mu)}
 $$  
  for any $\rho\in (0,1)$.
 Since $\lambda$ differs from $0$ and $1$, the homographic function  
 $$
x\mapsto \mu(1-\lambda) { \lambda + (1-\lambda) x\over \lambda \mu + (1-\lambda)(\mu-1)x}
$$
is increasing  on the interval $0 < x <{\lambda^2\mu\over (1-\lambda)(1-\lambda\mu)}$,  
and sends this interval onto $(1-\lambda, 1)$. By composition, we obtain that the  image of  $(0,1)$ by the map $\rho\mapsto \delta(\lambda,\mu,\rho)$ is contained  in the interval $(1-\lambda,1)$.

When $\lambda \mu \ge 1$, we have $ r_{\lambda,\mu}= -\log\lambda/ \log\mu $ and the series  $\Psi_\rho(\lambda,\mu)$  tends to $+ \infty$ when $\rho $ tends to $r_{\lambda,\mu}$ from below. Thus, we obtain in this case,
$$
0< \Psi_\rho(\lambda,\mu) < + \infty
 \quad {\rm and} \quad 
 1- \lambda <   \delta(\lambda,\mu,\rho) < { \mu(1-\lambda)\over \mu -1} = d_{\lambda,\mu}.
 $$

For the continuity's property, observe that the floor function $x\mapsto \lf x\rf$ is right continuous on $\R$ and continuous on $\R \setminus\Z$. 
 \end{proof}

We now  give an alternative formula for the function $\phi_{\lambda,\mu,\delta, \rho}$
in terms of the Hecke-Mahler series $\Phi_\rho$. 

\begin{lemma}
Let $0< \lambda <1$, $\mu >0$ and $0< \rho <r_{\lambda,\mu}$, 
then, for any real number $x > -1$, we have the equalities
\begin{eqnarray}
 \sum_{k\ge 0} \lambda^k\mu^{\lc k \rho + x\rc}  &=&  { 1 \over 1 - \lambda} - (1-\mu)\Phi_\rho(\lambda,\mu , x),
 \label{eq1}
\\
 \sum_{k \ge 0 } ( \lc (k+1)\rho + x \rc - \lc k\rho + x \rc ) \lambda^k \mu^{\lc k\rho + x \rc}  &=&
-{ 1 - \mu^{ \lc x \rc  } \over (1-\mu)\lambda} + {1-\lambda \over \lambda} \Phi_\rho(\lambda, \mu , x), 
\label{eq2}
\end{eqnarray}
where  the indeterminate ratio  $( 1 - 1^{ \lc x \rc}  ) / (1 -1)$  equals $\lc x \rc$ when $\mu=1$.
Moreover, the formula
$$
\phi_{\lambda,\mu,\delta, \rho}(y) = \lf y \rf + {\delta \over 1 - \lambda} - { \delta -\mu ( \lambda + \delta -1)\over \lambda} \Phi_\rho(\lambda, \mu , -\{ y\})
$$
holds for any real number $y$.
\end{lemma}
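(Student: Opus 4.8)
The plan is to prove the three displayed identities in order, since each one feeds into the next. For \eqref{eq1}, I would revert the order of summation in $\Phi_\rho(\lambda,\mu,x)=\sum_{k\ge 0}\sum_{0\le l<k\rho+x}\lambda^k\mu^l$, exactly as in the proof of Lemma~2.1: for fixed $k\ge 0$ the inner sum is a finite geometric sum $\sum_{l=0}^{\lc k\rho+x\rc-1}\mu^l$, so that $(1-\mu)\Phi_\rho(\lambda,\mu,x)=\sum_{k\ge 0}\lambda^k\bigl(1-\mu^{\lc k\rho+x\rc}\bigr)=\frac1{1-\lambda}-\sum_{k\ge 0}\lambda^k\mu^{\lc k\rho+x\rc}$, which rearranges to \eqref{eq1}. (When $\mu=1$ the inner sum is literally $\lc k\rho+x\rc$, and one checks the stated convention makes the formula degenerate correctly; the hypothesis $x>-1$ guarantees $\lc k\rho+x\rc\ge 0$ for all $k\ge 0$, and $\lambda\mu^\rho<1$ — which follows from $\rho<r_{\lambda,\mu}$ — gives convergence.)

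For \eqref{eq2}, the key observation is a telescoping/Abel-summation trick tying the two series together. Write $a_k=\lc k\rho+x\rc$ and note $\sum_{k\ge 0}(a_{k+1}-a_k)\lambda^k\mu^{a_k}$. I would split $\mu^{a_{k+1}}-\mu^{a_k}=(a_{k+1}-a_k)\mu^{a_k}(\mu-1)/(\mu-1)$ is not quite right since $a_{k+1}-a_k$ need not be $0$ or $1$; instead the cleanest route is to use \eqref{eq1} twice. Consider $\sum_{k\ge 0}\lambda^k\mu^{\lc(k+1)\rho+x\rc}=\sum_{k\ge 1}\lambda^{k-1}\mu^{\lc k\rho+x\rc}=\lambda^{-1}\bigl(\sum_{k\ge 0}\lambda^k\mu^{\lc k\rho+x\rc}-\mu^{\lc x\rc}\bigr)$, which by \eqref{eq1} equals $\lambda^{-1}\bigl(\frac1{1-\lambda}-(1-\mu)\Phi_\rho(\lambda,\mu,x)-\mu^{\lc x\rc}\bigr)$. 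Subtracting $\sum_{k\ge 0}\lambda^k\mu^{\lc k\rho+x\rc}=\frac1{1-\lambda}-(1-\mu)\Phi_\rho(\lambda,\mu,x)$ from this and simplifying should not immediately give \eqref{eq2}, because the left side of \eqref{eq2} has a factor $(a_{k+1}-a_k)$ multiplying $\mu^{a_k}$, not a difference $\mu^{a_{k+1}}-\mu^{a_k}$. The correct identity to exploit is that $\lc(k+1)\rho+x\rc-\lc k\rho+x\rc$ equals the number of integers in the interval $(k\rho+x,(k+1)\rho+x]$, so $\sum_{k\ge 0}(a_{k+1}-a_k)\lambda^k\mu^{a_k}$ can be regrouped by reindexing over those integers $l$ — each $l$ contributing $\lambda^k\mu^{l}$ where $k$ ranges over those indices with $l-1< k\rho+x\le\cdots$, i.e. a double sum in $(k,l)$ that telescopes against $\Phi_\rho$. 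I would set this up carefully and then collapse the geometric series in $\lambda$; the term $-\bigl(1-\mu^{\lc x\rc}\bigr)/((1-\mu)\lambda)$ arises as the boundary contribution at $k=0$.

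For the final formula for $\phi$, I would simply take the Definition~3 expression $\phi_{\lambda,\mu,\delta,\rho}(y)=\lf y\rf+\frac{1-\delta}{\lambda}+\sum_{k\ge 0}\lambda^k\mu^{\lf y\rf-\lf y-k\rho\rf}\bigl(\frac{\lambda+\delta-1}{\lambda}+\lf y-(k+1)\rho\rf-\lf y-k\rho\rf\bigr)$, factor out $\mu^{\lf y\rf}$, and rewrite $-\lf y-k\rho\rf=\lc k\rho-\{y\}\rc$ using $-\lf -t\rf=\lc t\rc$, so the exponent becomes $\lc k\rho-\{y\}\rc$ — i.e. set $x=-\{y\}$, which satisfies $x>-1$. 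Then the sum is a linear combination of exactly the two series in \eqref{eq1} and \eqref{eq2} with $x=-\{y\}$: the constant-coefficient part $\frac{\lambda+\delta-1}{\lambda}\sum_k\lambda^k\mu^{\lc k\rho+x\rc}$ uses \eqref{eq1}, and the $(\lc(k+1)\rho+x\rc-\lc k\rho+x\rc)$ part uses \eqref{eq2} after noting $\lf y-(k+1)\rho\rf-\lf y-k\rho\rf=\lc(k+1)\rho-\{y\}\rc-\lc k\rho-\{y\}\rc$; here $\lc x\rc=\lc-\{y\}\rc=0$ so the boundary term $1-\mu^{\lc x\rc}$ vanishes. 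Multiplying the $\mu^{\lf y\rf}$ back in, collecting the $\Phi_\rho$ coefficients, and simplifying the constant $\frac{1-\delta}{\lambda}+\mu^{\lf y\rf}\cdot(\text{stuff})$ against $\frac{\delta}{1-\lambda}$ should produce the claimed closed form, with the coefficient of $\Phi_\rho$ combining to $-\frac{\delta-\mu(\lambda+\delta-1)}{\lambda}$.

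The main obstacle I anticipate is the double-sum reindexing in \eqref{eq2}: getting the summation ranges for $l$ and $k$ exactly right (including the edge behaviour at $k=0$ and the dependence on $\lc x\rc$), and checking the $\mu=1$ degeneration, is the delicate bookkeeping. Once \eqref{eq1} and \eqref{eq2} are in hand, the $\phi$-formula is a mechanical substitution $x=-\{y\}$ together with careful tracking of the $\mu^{\lf y\rf}$ prefactor and the identity $-\lf -t\rf=\lc t\rc$; the only subtlety there is verifying that $\lc-\{y\}\rc=0$ when $y\notin\Z$ (so the exponent of $\mu$ is a genuine non-negative integer) and handling $y\in\Z$ separately by right-continuity.
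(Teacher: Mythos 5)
Your treatment of \eqref{eq1} is exactly the paper's argument (geometric inner sum), and your plan for the final $\phi$-formula follows the paper's route (periodicity in $y$, the identity $\lf t\rf=-\lc -t\rc$, then \eqref{eq1}--\eqref{eq2} at $x=-\{y\}$ with $\lc-\{y\}\rc=0$). The genuine gap is \eqref{eq2}, which is the heart of the lemma and which you never actually prove. You reject the identity $\mu^{\lc(k+1)\rho+x\rc}-\mu^{\lc k\rho+x\rc}=(\mu-1)\bigl(\lc(k+1)\rho+x\rc-\lc k\rho+x\rc\bigr)\mu^{\lc k\rho+x\rc}$ on the ground that the increment ``need not be $0$ or $1$'' --- but under the lemma's hypotheses it always is: $0<\rho<r_{\lambda,\mu}\le 1$, and the ceiling of a point can increase by at most $1$ when the point moves by $\rho<1$. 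This observation is precisely what makes the short argument work, and it is the paper's proof: multiply \eqref{eq1} by $1-\lambda$, shift the index in the second copy of the series (your ``use \eqref{eq1} twice'' idea) to get
$$(1-\lambda)\sum_{k\ge 0}\lambda^k\mu^{\lc k\rho+x\rc}=\mu^{\lc x\rc}+\lambda\sum_{k\ge 0}\lambda^k\bigl(\mu^{\lc(k+1)\rho+x\rc}-\mu^{\lc k\rho+x\rc}\bigr),$$
then convert the differences of powers into the $0$--$1$ increments by the displayed identity and solve for the sum in \eqref{eq2}; the $\mu^{\lc x\rc}$ boundary term produces $-(1-\mu^{\lc x\rc})/((1-\mu)\lambda)$. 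Your substitute plan --- reindexing a double sum over the integers $l$ captured between consecutive multiples of $\rho$ --- is left entirely at the level of ``I would set this up carefully,'' with the summation ranges, the $k=0$ boundary contribution, and the $\mu=1$ degeneration all unverified (and your description of the increment as the count of integers in $(k\rho+x,(k+1)\rho+x]$ is the floor-difference, not the ceiling-difference, interval). As written, \eqref{eq2} is asserted, not established.

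Two smaller points in the $\phi$ part, which is otherwise sound in outline: the conversion of the floor differences carries a sign, namely $\lf y-(k+1)\rho\rf-\lf y-k\rho\rf=-\bigl(\lc(k+1)\rho-\{y\}\rc-\lc k\rho-\{y\}\rc\bigr)$, whereas you wrote it without the minus; and there is no leftover $\mu^{\lf y\rf}$ prefactor to ``multiply back in,'' since $\mu^{\lf y\rf-\lf y-k\rho\rf}=\mu^{\lc k\rho-\{y\}\rc}$ directly (the map $y\mapsto\lf y\rf-\lf y-k\rho\rf$ has period $1$). Also no separate case $y\in\Z$ is needed: $-\{y\}\in(-1,0]$ always, so $\lc-\{y\}\rc=0$ including at integers. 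These slips would surface as sign/bookkeeping errors if you carried the computation through, but they are repairable; the missing proof of \eqref{eq2} is the substantive defect.
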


\begin{proof}
From Definition 4, we can write
$$
\begin{aligned}
\Phi_\rho(\lambda,\mu , x ) =  &\sum_{ k\ge 0} \lambda^k \sum_{ 0 \le l \le \lc k \rho + x \rc -1} \mu^l
=
 \sum_{k\ge 0} \lambda^k { 1 - \mu ^{ \lc k \rho + x\rc}\over 1 - \mu}
 \\
= &{ 1 \over (1-\lambda)(1-\mu)} - {1\over 1-\mu} \sum_{k\ge 0} \lambda^k \mu^{\lc k\rho + x\rc}
\end{aligned}
$$
which implies \eqref{eq1}. For equation \eqref{eq2}, multiplying \eqref{eq1}  by $ 1 -\lambda$, we find
$$
\begin{aligned}
(1-\lambda) \sum_{k\ge 0} \lambda^k \mu^{\lc k\rho + x\rc}
=& \sum_{k\ge 0} \lambda^k \mu^{\lc k\rho + x\rc}-\sum_{k\ge 1} \lambda^k \mu^{\lc (k-1)\rho + x\rc}
\\
 = &\mu^{\lc x \rc} + \lambda \sum_{k\ge 0} \lambda^k \left( \mu^{\lc (k+1)\rho +x \rc} - \mu^{\lc k \rho + x \rc } \right).
\end{aligned}
$$
Observe that, for any integer $k\ge 0$, $\lc (k+1)\rho +x \rc - \lc k \rho + x \rc $ takes only the value 0 or 1. Therefore
$$
\mu^{\lc (k+1)\rho +x \rc} - \mu^{\lc k \rho + x \rc } 
=(\mu-1)( \lc (k+1)\rho + x \rc - \lc k\rho + x \rc ) \mu^{\lc k \rho + x \rc }.
$$
We obtain the equality
$$
(1-\lambda)\left({ 1 \over 1 - \lambda} - (1-\mu)\Phi_\rho(\lambda,\mu , x)\right)
=\mu^{\lc x \rc} + \lambda(\mu-1)\sum_{k \ge 0 } ( \lc (k+1)\rho + x \rc - \lc k\rho + x \rc ) \lambda^k \mu^{\lc k\rho + x \rc}
$$
from which formula \eqref{eq2} follows. 

The map $y \mapsto \lf y \rf - \lf y - k \rho\rf $ has period 1 for any integer $k$. We can thus replace  $y$ by its fractional part $\{ y \}$ in the sum over $k$  occurring in the definition 3 giving $\phi_{\lambda,\mu,\delta, \rho}$. Observe also that
$\lf x \rf = -  \lc -x \rc$ for any real number $x$. We can therefore rewrite $\phi_{\lambda,\mu,\delta,\rho}(y)$ in the form
$$
\phi_{\lambda,\mu, \delta,\rho}(y) =  \lfloor y\rfloor   + {1-\delta\over \lambda} + \sum_{k= 0}^{+\infty}\lambda^k\mu^{\lc k\rho - \{ y\}\rc }\left( {\lambda +\delta -1 \over \lambda} - \lc (k+1)\rho -\{y\} \rc   +  \lc k\rho- \{ y\}\rc  \right).
$$
Using \eqref{eq1} and \eqref{eq2}  for $ x = -\{ y\}$ and noting that $\lc -\{ y\} \rc  = 0$, we obtain
$$
\begin{aligned}
\phi_{\lambda,\mu,\delta,\rho}(y) = &\lf y \rf + {1-\delta \over \lambda} + { \lambda + \delta -1 \over \lambda} \left( { 1 \over 1 - \lambda}
  - (1-\mu)\Phi_\rho(\lambda,\mu , -\{ y \})\right)
  \\
  & \hskip 6cm - {1-\lambda\over \lambda} 
\Phi_\rho(\lambda,\mu , -\{ y \})
\\
= & 
\lf y \rf + {\delta \over 1 - \lambda} - { \delta -\mu ( \lambda + \delta -1)\over \lambda} \Phi_\rho(\lambda, \mu , -\{ y\}).
\end{aligned}
$$
\end{proof}

\begin{corollary}
Let $0< \lambda <1$, $\mu >0$, $0< \rho <r_{\lambda,\mu}$ and $1-\lambda < \delta <d_{\lambda,\mu}$,
then the function $\phi_{\lambda,\mu,\delta, \rho}$  is right continuous and non-decreasing on the interval $I= [ 0, 1)$. Moreover, \\
(i) $\phi_{\lambda,\mu,\delta, \rho}$ is strictly increasing on $I$, if $\rho$ is irrational.\\
(ii) If $\rho = {p\over q}$ is rational, the function $y\mapsto \phi_{\lambda,\mu, \delta,p/q}(y)$ is constant on each interval
$[ {n\over q},{n+1\over q}), \, n\in \Z$.  \\
(iii) In any case, the relation $\phi_{\lambda,\mu,\delta,\rho}(y+1) = \phi_{\lambda,\mu,\delta, \rho}(y)+1$ holds for any real number $y$.
\end{corollary}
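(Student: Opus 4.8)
The plan is to deduce everything from the closed formula for $\phi=\phi_{\lambda,\mu,\delta,\rho}$ provided by Lemma 2.2,
$$
\phi(y) = \lf y\rf + \frac{\delta}{1-\lambda} - c\,\Phi_\rho(\lambda,\mu,-\{y\}),\qquad c := \frac{\delta-\mu(\lambda+\delta-1)}{\lambda},
$$
so that the statement becomes a matter of elementary properties of the Hecke--Mahler series $\Phi_\rho(\lambda,\mu,\cdot)$ together with the sign of the constant $c$. First I would observe that $c>0$: indeed $\delta>\mu(\lambda+\delta-1)$ holds throughout the range $1-\lambda<\delta<d_{\lambda,\mu}$, as recalled in the introduction when discussing the injectivity of $f$ (this is automatic when $\lambda\mu<1$ and equivalent to $\delta<d_{\lambda,\mu}$ when $\lambda\mu\ge1$). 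Then I would record, for $\Phi_\rho(\lambda,\mu,x)=\sum_{k\ge0}\sum_{0\le l<k\rho+x}\lambda^k\mu^l$ viewed as a function of $x$, that it is non-decreasing (as $x$ grows the summation index set only enlarges and every monomial $\lambda^k\mu^l$ is positive) and left-continuous (the remark following Definition 4), together with the exact increment, valid for $x<x'$:
$$
\Phi_\rho(\lambda,\mu,x')-\Phi_\rho(\lambda,\mu,x) = \sum_{k\ge0}\lambda^k\!\!\sum_{\substack{l\ge0\\ x\le l-k\rho<x'}}\!\!\mu^l .
$$
With these in hand, part (iii) is immediate, since $\lf y+1\rf=\lf y\rf+1$ and $\{y+1\}=\{y\}$.

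For the non-decrease and right-continuity of $\phi$ on $I$, I would restrict to $y\in[0,1)$, where $\lf y\rf=0$ and $\{y\}=y$, so that $\phi(y)=\frac{\delta}{1-\lambda}-c\,\Phi_\rho(\lambda,\mu,-y)$. As $y$ increases, $-y$ decreases, hence $\Phi_\rho(\lambda,\mu,-y)$ decreases and, since $c>0$, $\phi$ is non-decreasing; and right-continuity of $\phi$ at any $y_0\in I$ follows from the left-continuity of $\Phi_\rho$ in its last argument, because $y\searrow y_0$ forces $-y\nearrow -y_0$.

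For (i), assume $\rho$ irrational and $0\le y_1<y_2<1$. Applying the increment formula with $x=-y_2$, $x'=-y_1$ gives
$$
\phi(y_2)-\phi(y_1)=c\bigl(\Phi_\rho(\lambda,\mu,-y_1)-\Phi_\rho(\lambda,\mu,-y_2)\bigr)=c\sum_{k\ge0}\lambda^k\!\!\sum_{\substack{l\ge0\\ -y_2\le l-k\rho<-y_1}}\!\!\mu^l\ \ge\ 0,
$$
and the term with $k\ge1$ and $l=\lf k\rho\rf$ appears precisely when $\{k\rho\}\in(y_1,y_2]$. Since $(\{k\rho\})_{k\ge1}$ is dense in $[0,1)$, such a $k$ exists, contributing a strictly positive term $c\,\lambda^k\mu^{\lf k\rho\rf}$, whence $\phi(y_2)>\phi(y_1)$. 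For (ii), with $\rho=p/q$, I would check that $\Phi_{p/q}(\lambda,\mu,\cdot)$ is constant on every interval $\bigl(-\tfrac{n+1}{q},-\tfrac{n}{q}\bigr]$: by the increment formula its possible jumps occur only at arguments $l-k\tfrac{p}{q}\in\frac1q\Z$, and the half-open range $[x,x')$ attached to two points $x<x'$ of that interval contains no element of $\frac1q\Z$ (the single multiple of $1/q$ in the interval is its right endpoint $-n/q$, which is excluded from $[x,x')$). Hence $\phi$ is constant on $[n/q,(n+1)/q)$ for $0\le n\le q-1$, and the case of an arbitrary $n\in\Z$ follows from (iii).

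The only real work is the combinatorial bookkeeping in (i)--(ii): identifying exactly which monomials of $\Phi_\rho$ switch on as its argument crosses a value of the form $l-k\rho$, rephrasing this as the condition $\{k\rho\}\in(y_1,y_2]$ in the irrational case and as membership in $\frac1q\Z$ when $\rho=p/q$, and keeping track of the half-open endpoints. Everything else reduces directly to Lemma 2.2, the inequality $c>0$, and the monotone and left-continuous nature of the Hecke--Mahler series noted after Definition 4.
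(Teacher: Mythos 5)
Your proof is correct and follows essentially the same route as the paper: both rest on the closed formula of Lemma 2.2, the positivity of the coefficient $\frac{\delta-\mu(\lambda+\delta-1)}{\lambda}$, and the monotonicity and left-continuity of $x\mapsto\Phi_\rho(\lambda,\mu,x)$. The only difference is that you carefully spell out (via the increment formula, density of $(\{k\rho\})_k$, and the $\tfrac1q\Z$ jump set) the assertions the paper dismisses as ``straightforward,'' which is a faithful elaboration rather than a different approach.
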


\begin{proof}
The function $x \mapsto \Phi_\rho(\lambda,\mu , x)$ is clearly non-decreasing and strictly increasing when $\rho$ is irrational. By Lemma 2.2, we have
$$
\phi_{\lambda,\mu,\delta, \rho}(y) =  {\delta \over 1 - \lambda} - { \delta -\mu ( \lambda + \delta -1)\over \lambda} \Phi_\rho(\lambda, \mu , -y)
$$
when $0 \le y <1$. Notice   that,  by the assumption, the coefficient $-{ \delta -\mu ( \lambda + \delta -1)\over \lambda} $ is negative which yields that $\phi_{\lambda,\mu,\delta,\rho}$  is non-decreasing. The other assertions are straightforward.

\end{proof}

\subsection{Proof of Theorem 2}
 Let us begin with the following result on the transcendency of values of the Hecke-Mahler function, due to Loxton and Van der Poorten  \cite{LoVdPA}. See also Sections 2.9 and 2.10 of the monograph \cite{Ni} and the survey article \cite{LoVdPB}.

\begin{theorem} 
Let $\lambda$ and $\mu$ be non-zero algebraic numbers and let $\rho$ be an irrational real number. Assume that  $0< | \lambda | <1$ and    $| \lambda | | \mu |^\rho <1$.
Then $\Psi_\rho(\lambda,\mu) $ is a transcendental  number. 
\end{theorem}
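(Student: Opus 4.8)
\emph{The plan} is to prove Theorem 4 by Mahler's method, in the non-autonomous form due to Loxton and Van der Poorten. First I would reduce to the two-variable series $f_\rho(z,w):=\sum_{n\ge 0}z^{n}w^{\lfloor n\rho\rfloor}$: summing the inner geometric progression in the definition of $\Psi_\rho$ shows that, for $\mu\neq 1$, $\Psi_\rho(\lambda,\mu)=\tfrac{\mu}{\mu-1}\bigl(f_\rho(\lambda,\mu)-1-\tfrac{\lambda}{1-\lambda}\bigr)$, an invertible affine relation with algebraic coefficients, so $\Psi_\rho(\lambda,\mu)$ is transcendental exactly when $f_\rho(\lambda,\mu)$ is; the case $\mu=1$ is the classical one-variable Hecke series and is treated in the same way (or quoted from the classical literature). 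The hypotheses $0<|\lambda|<1$ and $|\lambda|\,|\mu|^{\rho}<1$ are precisely what guarantees that the power series $f_\rho$ converges at $(\lambda,\mu)$.

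The heart of the argument is a functional equation for $f_\rho$ attached to the continued fraction $\rho=[0;a_1,a_2,\dots]$. With $\rho^{(0)}=\rho$ and $\rho^{(j)}=[0;a_{j+1},a_{j+2},\dots]$ the Gauss iterates (so that $1/\rho^{(j-1)}=a_j+\rho^{(j)}$), one splits the sum defining $f_{\rho^{(j-1)}}$ according to the value $m=\lfloor n\rho^{(j-1)}\rfloor$, sums the inner geometric progression in $n$, and uses $\lceil m/\rho^{(j-1)}\rceil=a_j m+\lceil m\rho^{(j)}\rceil$ together with $\lceil m\rho^{(j)}\rceil=\lfloor m\rho^{(j)}\rfloor+1$ for $m\ge 1$ to obtain
\[ f_{\rho^{(j-1)}}(z,w)=\frac{1}{1-z}\Bigl(1+(1-w^{-1})\,z\,\bigl(f_{\rho^{(j)}}(wz^{a_j},z)-1\bigr)\Bigr). \]
Hence the family $\{f_{\rho^{(j)}}\}_{j\ge0}$ obeys an inhomogeneous linear system under the monomial substitutions $\tau_j\colon (z,w)\mapsto (wz^{a_j},z)$, whose exponent matrices $\bigl(\begin{smallmatrix}a_j&1\\1&0\end{smallmatrix}\bigr)$ are the continued-fraction matrices of $\rho$.

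Then I would run the Mahler machine along the orbit $(z_0,w_0)=(\lambda,\mu)$, $(z_j,w_j)=\tau_j(z_{j-1},w_{j-1})$. These points are monomials $\lambda^{p_j}\mu^{q_j}$ whose integer exponents satisfy the continuant recursion $e_j=a_je_{j-1}+e_{j-2}$, hence grow at least geometrically; and the quantity $u_j:=\log|z_j|+\rho^{(j)}\log|w_j|$, which obeys $u_j=u_{j-1}/\rho^{(j-1)}$ with $u_0=\log(|\lambda|\,|\mu|^{\rho})<0$, shows that the orbit stays in the region of convergence and away from the pole $z=1$, with $(z_j,w_j)\to 0$ geometrically. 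Assume, for contradiction, that $f_\rho(\lambda,\mu)$ is algebraic; by the functional equation, so is every value $f_{\rho^{(j)}}(z_j,w_j)$. By Siegel's lemma one builds a nonzero polynomial $P$ with algebraic-integer coefficients, of controlled degree and height, such that an auxiliary analytic function $E$ formed from $P$ and the $f_{\rho^{(j)}}$ vanishes to high order $N$ at the origin; iterating the functional equations expresses $E$ at $(z_j,w_j)$ through the algebraic values at the origin, so $E(z_j,w_j)$ is either $0$ or, by Liouville's inequality, not too small, whereas the analytic estimate coming from $N$ and from $(z_j,w_j)\to 0$ makes it extremely small. Balancing the parameters forces $E(z_j,w_j)=0$ for a range of $j$, and a zero estimate then forces $P\equiv 0$, the desired contradiction. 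Irrationality of $\rho$ enters exactly here: no $\rho^{(j)}$ is rational, so the $f_{\rho^{(j)}}$ are ``independent enough'' for the zero estimate to apply.

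\emph{The main obstacle} is the non-autonomous character of the system. Since $\rho$ need not be a quadratic irrational, the partial quotients $a_j$ are not eventually periodic, so one does not have a single Mahler equation $f(z)=R(z,f(Tz))$ with a fixed monomial map $T$, but an infinite family of equations linked by varying substitutions; controlling the growth of degrees, heights and radii of convergence uniformly along this non-periodic orbit, and establishing the zero estimate in that generality, is precisely the refinement of Mahler's method carried out by Loxton and Van der Poorten. (When $\rho$ is a quadratic irrational the sequence $(a_j)$ is eventually periodic, the system closes up to a classical autonomous Mahler equation, and one recovers Mahler's original theorem.)
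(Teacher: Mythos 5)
The paper does not prove this statement at all: Theorem~4 is quoted as a known result of Loxton and van der Poorten \cite{LoVdPA} (see also \cite{Ni}, \cite{LoVdPB}), so there is no in-paper argument to compare against. Your proposal is essentially an outline of that cited proof, and the parts that can be checked directly are correct --- the affine reduction $\Psi_\rho(\lambda,\mu)=\frac{\mu}{\mu-1}\bigl(f_\rho(\lambda,\mu)-1-\frac{\lambda}{1-\lambda}\bigr)$ for $\mu\neq 1$, and the functional equation $f_{\rho^{(j-1)}}(z,w)=\frac{1}{1-z}\bigl(1+(1-w^{-1})z(f_{\rho^{(j)}}(wz^{a_j},z)-1)\bigr)$ both verify --- but the transcendence core (the Siegel-lemma construction, the Liouville lower bound versus the analytic upper bound along the orbit $(z_j,w_j)$, and above all the zero estimate for the non-autonomous system) is only summarized and deferred to Loxton--van der Poorten, which is the same thing the paper does by citation; as a self-contained proof it would still require importing or reproving that machinery, including the technical admissibility conditions on the orbit (e.g.\ $z_j\neq 1$ and the precise sense in which the evaluation points tend to the origin when $|\mu|>1$).
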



Using the homographic relations \eqref{eqdelta}, both  numbers $\delta(\lambda,\mu, \rho)$ and $\Psi_\rho(\lambda,\mu)$ are simultaneously either algebraic or transcendental. Then, it follows from  Theorem 4 that
 $\delta(\lambda,\mu, \rho)$ is a transcendental number for any irrational real number $0 < \rho <r_{\lambda,\mu}$.
As a consequence of the assertion (ii) of Theorem 1, the rotation number $\rho_{\lambda,\mu, \delta}$ cannot be an irrational number $\rho$ when $\lambda, \mu, \delta$ are algebraic numbers. It is therefore a rational number. 
Theorem 2 is established.

\section{Properties of the function $\phi$}

Let $\lambda, \mu, \delta, \rho$ be four real numbers satisfying the inequalities
$$
0< \lambda <1, \, \mu >0, \, 1-\lambda <\delta <d_{\lambda,\mu},\,  0<\rho<r_{\lambda,\mu}. 
$$
We estimate in this technical section the value of the function $\phi_{\lambda,\mu,\delta, \rho}$ at the points $0$ and $1-\rho$ according to the values of $\delta$.
We stress that $\rho$ is not assumed here to be the rotation number of the map $f_{\lambda,\mu, \delta}$. On the opposite, we shall make use of our results to identify this rotation number $\rho_{\lambda,\mu, \delta}$ in the subsequent Section 5,  and thus proving Theorem 1. 
Our estimates are based on numerical relations betweeen some special values of the Hecke-Mahler series $\Phi_\rho$ and the function $\sigma$, as for instance the formulae \eqref{eq3} to \eqref{eq6} below. 
 
\begin{lemma}
Assume that $0<\rho <r_{\lambda,\mu}$ is irrational. Let $\delta = \delta(\lambda,\mu,\rho)$ and $\phi = \phi_{\lambda, \mu,\delta,\rho}$. Then the following equalities hold
$$
\phi(0)=0 \quad \hbox{\rm and} \quad \phi(1-\rho) = { 1 - \delta\over \lambda} =\eta.
$$
\end{lemma}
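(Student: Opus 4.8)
The plan is to exploit the closed formula for $\phi$ in terms of the Hecke–Mahler series $\Phi_\rho$ obtained in Lemma 2.2, namely
$$
\phi_{\lambda,\mu,\delta,\rho}(y) = \lfloor y\rfloor + \frac{\delta}{1-\lambda} - \frac{\delta-\mu(\lambda+\delta-1)}{\lambda}\,\Phi_\rho(\lambda,\mu,-\{y\}),
$$
together with the expression \eqref{eqdelta} relating $\delta(\lambda,\mu,\rho)$ to $\Psi_\rho(\lambda,\mu)$. First I would evaluate $\phi(0)$: since $\lfloor 0\rfloor = 0$ and $\{0\}=0$, one needs $\Phi_\rho(\lambda,\mu,0)=\sum_{k\ge 0}\sum_{0\le l<k\rho}\lambda^k\mu^l$. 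The inner sum is empty for $k=0$, and for $k\ge 1$ it runs over $0\le l\le \lceil k\rho\rceil-1$ (using that $\rho$ is irrational so $k\rho\notin\Z$), which by the same index-reversal trick used in the proof of Lemma 2.1 should collapse to an expression in $\Psi_\rho(\lambda,\mu)$. Concretely, $\Phi_\rho(\lambda,\mu,0) = \sum_{k\ge 1}\lambda^k\frac{1-\mu^{\lceil k\rho\rceil}}{1-\mu}$; comparing with \eqref{eq1} applied at $x=0$ (where $\lceil 0\rceil=0$), one gets $\Phi_\rho(\lambda,\mu,0)$ directly in terms of $\sum_{k\ge 0}\lambda^k\mu^{\lceil k\rho\rceil}$, and the latter is handled as in Lemma 2.1. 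Then I would substitute into the displayed formula for $\phi(0)$ and into \eqref{eqdelta}, and check algebraically that the homographic expression for $\delta(\lambda,\mu,\rho)$ is exactly the value of $\delta$ that forces the right-hand side to vanish. This is a finite homographic identity, so it should reduce to clearing denominators and matching coefficients of $\Psi_\rho(\lambda,\mu)$.

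For $\phi(1-\rho)$, since $0<\rho<1$ we have $\lfloor 1-\rho\rfloor = 0$ and $\{1-\rho\}=1-\rho$, so $-\{1-\rho\}=\rho-1$, and I need $\Phi_\rho(\lambda,\mu,\rho-1)=\sum_{k\ge 0}\sum_{0\le l<(k+1)\rho-1}\lambda^k\mu^l$. Reindexing $j=k+1$ this is $\sum_{j\ge 1}\sum_{0\le l<j\rho-1}\lambda^{j-1}\mu^l = \lambda^{-1}\sum_{j\ge 1}\sum_{1\le l+1<j\rho}\lambda^{j}\mu^{l}$; shifting $h=l+1$ gives $\lambda^{-1}\mu^{-1}\sum_{j\ge 1}\sum_{1\le h<j\rho}\lambda^j\mu^h$, which is almost $\Psi_\rho(\lambda,\mu)$ up to the endpoint (again irrationality of $\rho$ rules out $j\rho\in\Z$, so $h<j\rho$ and $h\le j\rho$ agree). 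Thus $\Phi_\rho(\lambda,\mu,\rho-1) = (\lambda\mu)^{-1}\Psi_\rho(\lambda,\mu)$, and by Lemma 2.1 this equals $\sigma(\lambda,\mu,\rho)/(1-\lambda)$. Plugging this and the defining formula $\delta = \delta(\lambda,\mu,\rho) = \frac{(1-\lambda)(1+\mu\sigma)}{1+(\mu-1)\sigma}$ into the Lemma 2.2 formula for $\phi$, I expect the claim $\phi(1-\rho) = (1-\delta)/\lambda = \eta$ to drop out after simplification; in particular the coefficient $\frac{\delta-\mu(\lambda+\delta-1)}{\lambda}$ and the constant $\frac{\delta}{1-\lambda}$ combine with $\sigma$ so that everything telescopes into $\eta$.

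The main obstacle — really the only non-routine point — is bookkeeping of the summation endpoints in converting $\Phi_\rho$ at the special arguments $0$ and $\rho-1$ into $\Psi_\rho$ (equivalently $\sigma$): one must be careful that the reindexing shifts ($k\mapsto k+1$, $l\mapsto l+1$) and the strict-versus-nonstrict inequalities line up, and here the hypothesis that $\rho$ is irrational is exactly what guarantees $k\rho\notin\Z$ so that no boundary term is lost or double-counted. Once the two identities $\Phi_\rho(\lambda,\mu,0) = \frac{1}{1-\lambda}\bigl(\frac{1}{1-\lambda}-\sum_{k\ge 0}\lambda^k\mu^{\lceil k\rho\rceil}\bigr)\cdot$(appropriate rearrangement) and $\Phi_\rho(\lambda,\mu,\rho-1)=\frac{\sigma}{1-\lambda}$ are in hand, the rest is a mechanical homographic computation using \eqref{eqdelta}, with no genuine difficulty. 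I would present it by first recording these two $\Phi_\rho$-evaluations as a short sub-claim, then doing the two substitutions.
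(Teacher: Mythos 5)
Your proposal is correct and follows essentially the same route as the paper: evaluate $\Phi_\rho(\lambda,\mu,0)$ and $\Phi_\rho(\lambda,\mu,\rho-1)$ in terms of $\sigma$ (the paper gets the first by splitting off the $l=0$ terms, $\Phi_\rho(\lambda,\mu,0)=\tfrac{\lambda}{1-\lambda}+\Psi_\rho(\lambda,\mu)=\tfrac{\lambda(1+\mu\sigma)}{1-\lambda}$, and the second via \eqref{eq2}, whereas you reindex directly to $\Psi_\rho$ — both give $\sigma/(1-\lambda)$), then substitute into the Lemma 2.2 formula for $\phi$ and invoke the defining relation $\delta\left(1+(\mu-1)\sigma\right)=(1-\lambda)(1+\mu\sigma)$. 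The ``mechanical'' homographic checks you defer do indeed reduce exactly to that relation, so there is no gap.
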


\begin{proof}
Recall the formula
$$
\delta= {(1-\lambda)(1+\mu \sigma)\over 1+ (\mu-1)\sigma},\quad{\rm where}\quad
\sigma= \sum_{k\ge1}\left(  \left\lfloor (k+1) \rho \right\rfloor   -  \left\lfloor k \rho \right\rfloor   \right)\lambda^{k} \mu^{\lfloor k \rho\rfloor  }.
$$
Notice first that we have the equalities
\begin{multline}
\Phi_\rho(\lambda,\mu, 0) = \sum_{k\ge 1} \sum_{ 0\le l < k \rho} \lambda^k\mu^l
=\sum_{k\ge 1}\lambda^k + \sum_{k\ge 1} \sum_{ 1\le l < k \rho} \lambda^k\mu^l
= { \lambda\over 1-\lambda} + \Psi_\rho(\lambda, \mu)
\\
= {\lambda\over 1-\lambda}(1+ \mu \sigma), \label{phi0}
\end{multline}
the last one coming from Lemma 2.1 and  noting that the strict inequality  $l< k\rho$ is equivalent to $l\le k\rho$,  when  $\rho$ is irrational.
 It follows from Lemma 2.2 and \eqref{phi0} that
$$
\begin{aligned}
\phi(0) = & {\delta \over 1 - \lambda} - { \delta -\mu ( \lambda + \delta -1)\over \lambda} \Phi_\rho(\lambda, \mu , 0)
=
{\delta \over 1 - \lambda} - { \delta -\mu ( \lambda + \delta -1)\over 1-\lambda} (1+\mu\sigma)
\\
= & {\mu\Big( \delta (1  +  (\mu-1)\sigma) - (1-\lambda)(1+\mu\sigma)\Big) \over 1-\lambda}= 0.
\end{aligned}
$$

For the value $\phi(1-\rho)$, we compute $\Phi_\rho(\lambda,\mu , \rho-1)$ using \eqref{eq2}. 
Noting that $\lc \rho -1 \rc =0$,  we find
$$
\begin{aligned}
{1-\lambda \over \lambda} \Phi_\rho(\lambda,\mu , \rho-1)
=&
\sum_{k \ge 0 } ( \lc (k+1)\rho + \rho-1 \rc - \lc k\rho + \rho-1 \rc ) \lambda^k \mu^{\lc k\rho + \rho-1 \rc}
\\
=&{1\over \lambda\mu}\sum_{k \ge 1 } ( \lc (k+1)\rho  \rc - \lc k\rho  \rc ) \lambda^k \mu^{\lc k\rho  \rc}
\\
= & {1\over \lambda}\sum_{k \ge 1 } ( \lf (k+1)\rho  \rf - \lf k\rho  \rf ) \lambda^k \mu^{\lf k\rho  \rf} = {\sigma\over \lambda},
\end{aligned}
$$
since $\lc k \rho \rc = \lf k \rho \rf +1$ for any integer $k\ge 1$. 
Therefore
$$
\begin{aligned}
& \phi(1-\rho)  = {\delta\over 1 -\lambda} - {\delta- \mu(\lambda + \delta -1)\over \lambda}
\Phi_\rho( \lambda,\mu, \rho -1) 
={\delta\over 1 -\lambda} - {\delta- \mu(\lambda + \delta -1)\over \lambda}
{\sigma\over 1 -\lambda}
\\
& = { \delta(\lambda +(\mu-1)\sigma) -(1-\lambda)\mu\sigma\over \lambda(1-\lambda)}
={(1-\lambda)(1-\delta) + \delta(1+(\mu-1)\sigma) -(1-\lambda)(1+\mu\sigma)\over \lambda(1-\lambda)} 
\\
&= {1-\delta\over \lambda},
\end{aligned}
$$
since $ \delta = (1-\lambda)(1+ \mu\sigma)/(1+ (\mu-1)\sigma)$.
 \end{proof}
 
When $\rho$ is a rational  number $ p/ q $, the function $\phi_{\lambda,\mu,\delta,p/q}$ is constant on any  interval of the form $[ {n\over q }, {n+1\over q}), n\in \Z$,  and has
 a positive jump at the endpoints $\Z/q$. In this case, we have the analogous
 
 \begin{lemma}
 Assume that $\rho =p/ q$ and 
 $
 \delta\left(\lambda, \mu , \left( p/ q\right)^- \right)\le \delta <   \delta(\lambda, \mu , p /q).
 $
 Put $\phi = \phi_{\lambda,\mu, \delta, p/q}$. Then
 $$
\phi\left(-{1\over q}\right) = \phi(0^-) <0  \le \phi(0)
$$
and
$$
\phi\left({q-p-1\over q}\right)= \phi\left(\left(1-{p\over q}\right)^-\right) < {1 -\delta \over \lambda} \le  \phi\left(1-{p\over q}\right)  .
 $$
 \end{lemma}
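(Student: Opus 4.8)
The plan is to reduce the four inequalities to four ``boundary identities'' at the two endpoints $\delta_-:=\delta(\lambda,\mu,(p/q)^-)$ and $\delta_+:=\delta(\lambda,\mu,p/q)$ of the allowed range, and then to propagate them by monotonicity in the parameter. Write $\phi=\phi_{\lambda,\mu,\delta,p/q}$; the Corollary following Lemma~2.1 gives $1-\lambda<\delta_-\le\delta<\delta_+<d_{\lambda,\mu}$. Since $\gcd(p,q)=1$ and, because $0<\rho<r_{\lambda,\mu}\le 1$, one has $1\le p\le q-1$, both $[-\tfrac1q,0)$ and $[\tfrac{q-p-1}{q},1-\tfrac pq)$ are intervals of the form $[\tfrac nq,\tfrac{n+1}{q})$, on which $\phi$ is constant by the Corollary following Lemma~2.2. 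This yields the two asserted equalities $\phi(-\tfrac1q)=\phi(0^-)$ and $\phi(\tfrac{q-p-1}{q})=\phi((1-\tfrac pq)^-)$, and reduces the lemma to proving $\phi(0^-)<0\le\phi(0)$ and $\phi((1-\tfrac pq)^-)<\tfrac{1-\delta}{\lambda}\le\phi(1-\tfrac pq)$.

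Next I would record a monotonicity fact. By Lemma~2.2, for every real $t$ and every real $y$,
$$
\phi_{\lambda,\mu,t,p/q}(y)=\lf y\rf+\frac{t}{1-\lambda}-\frac{t-\mu(\lambda+t-1)}{\lambda}\,\Phi_{p/q}(\lambda,\mu,-\{ y\}),
$$
so $t\mapsto\phi_{\lambda,\mu,t,p/q}(y)$ is affine with slope $\tfrac1{1-\lambda}-\tfrac{1-\mu}{\lambda}\Phi_{p/q}(\lambda,\mu,-\{ y\})$, and this slope is strictly positive: it is $\ge\tfrac1{1-\lambda}$ when $\mu\ge1$, while for $\mu<1$ one has $\Phi_{p/q}(\lambda,\mu,x)<\sum_{k\ge1}\lambda^k\cdot\tfrac1{1-\mu}=\tfrac{\lambda}{(1-\lambda)(1-\mu)}$ for every $x\le0$ (each inner geometric partial sum being $<\tfrac1{1-\mu}$, and the term $k=0$ vanishing). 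Consequently, at each of the four points $y\in\{0,\,-\tfrac1q,\,\tfrac{q-p-1}{q},\,1-\tfrac pq\}$, both $t\mapsto\phi_{\lambda,\mu,t,p/q}(y)$ and $t\mapsto\phi_{\lambda,\mu,t,p/q}(y)-\tfrac{1-t}{\lambda}$ are strictly increasing in $t$ on $(1-\lambda,d_{\lambda,\mu})$.

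The heart of the matter is the four boundary identities
$$
\begin{gathered}
\phi_{\lambda,\mu,\delta_-,p/q}(0)=0,\qquad
\phi_{\lambda,\mu,\delta_-,p/q}\!\left(1-\tfrac pq\right)=\frac{1-\delta_-}{\lambda},\\
\phi_{\lambda,\mu,\delta_+,p/q}(0^-)=0,\qquad
\phi_{\lambda,\mu,\delta_+,p/q}\!\left(\left(1-\tfrac pq\right)^-\right)=\frac{1-\delta_+}{\lambda},
\end{gathered}
$$
which are the exact rational counterparts of Lemma~4.1. I would prove them by the same computation: evaluate $\Phi_{p/q}(\lambda,\mu,\cdot)$ at the arguments $0$, $-\tfrac{q-1}{q}$, $-\tfrac{q-p-1}{q}$ and $\tfrac pq-1$ by means of \eqref{eq1} and \eqref{eq2}, substitute into Lemma~2.2, and use $\delta_\pm=\tfrac{(1-\lambda)(1+\mu\sigma_\pm)}{1+(\mu-1)\sigma_\pm}$, where $\sigma_+=\sigma(\lambda,\mu,p/q)$ and $\sigma_-$ denotes its left limit at $p/q$. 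The one new feature with respect to Lemma~4.1 is that, for $\rho=p/q$, the number $k\rho$ is an integer exactly when $q\mid k$, so in $\Phi_{p/q}(\lambda,\mu,0)$ the strict inequality $l<k\rho$ discards precisely the boundary terms at those indices; this turns $\sigma(\lambda,\mu,p/q)$ into $\sigma_-$ — one obtains $\Phi_{p/q}(\lambda,\mu,0)=\tfrac{\lambda}{1-\lambda}(1+\mu\sigma_-)$, whence $\phi_{\lambda,\mu,\delta_-,p/q}(0)=0$ by the algebra already performed in the proof of Lemma~4.1 — whereas in $\Phi_{p/q}(\lambda,\mu,-\tfrac{q-1}{q})$ and $\Phi_{p/q}(\lambda,\mu,\tfrac pq-1)$ those boundary terms survive and rebuild the full series $\sigma_+$, yielding the two identities at $\delta_+$. (Alternatively, the four identities can be deduced from Lemma~4.1 by letting an irrational $\rho'$ tend to $p/q$ from below, respectively from above, using the one-sided continuity of $\rho\mapsto\Phi_\rho(\lambda,\mu,x)$ and of $\rho\mapsto\delta(\lambda,\mu,\rho)$ together with its one-sided limits at $p/q$; there one must account for the upward jump of $\Phi_\rho(\lambda,\mu,0)$ by $\tfrac{\lambda^q\mu^p}{1-\lambda^q\mu^p}$ as $\rho$ crosses $p/q$.)

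Granting these identities, the lemma follows from the monotonicity step together with the equalities $\phi(0^-)=\phi(-\tfrac1q)$ and $\phi((1-\tfrac pq)^-)=\phi(\tfrac{q-p-1}{q})$ (which hold for every parameter in $(1-\lambda,d_{\lambda,\mu})$): as $\delta\ge\delta_-$, strict increase in the parameter gives $\phi(0)\ge\phi_{\lambda,\mu,\delta_-,p/q}(0)=0$ and $\phi(1-\tfrac pq)-\tfrac{1-\delta}{\lambda}\ge\phi_{\lambda,\mu,\delta_-,p/q}(1-\tfrac pq)-\tfrac{1-\delta_-}{\lambda}=0$; and as $\delta<\delta_+$, it gives $\phi(0^-)<\phi_{\lambda,\mu,\delta_+,p/q}(0^-)=0$ and $\phi((1-\tfrac pq)^-)-\tfrac{1-\delta}{\lambda}<\phi_{\lambda,\mu,\delta_+,p/q}((1-\tfrac pq)^-)-\tfrac{1-\delta_+}{\lambda}=0$, i.e. $\phi((1-\tfrac pq)^-)<\tfrac{1-\delta}{\lambda}$. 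Combined with the equalities of the first step this is the assertion. I expect the only delicate point to be the evaluation of the Hecke--Mahler series $\Phi_{p/q}$ at these special rational arguments, namely the careful accounting of the contributions from the indices $k\in q\Z$: it is exactly this accounting that encodes the gap between $\delta(\lambda,\mu,p/q)$ and its left limit $\delta(\lambda,\mu,(p/q)^-)$, and that, carried through, yields the closed-form expressions for these two numbers appearing in Theorem~1.
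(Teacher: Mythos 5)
Your proposal is correct, and at its computational core it coincides with the paper's proof: everything reduces to evaluating $\Phi_{p/q}$ at the four arguments $0$, $-\tfrac{q-1}{q}$, $\tfrac pq-1$, $-\tfrac{q-p-1}{q}$ by splitting the series according to the classes of $k$ modulo $q$ (the paper's formulae \eqref{eq3}--\eqref{eq6}, together with the closed forms \eqref{sigma} for $\sigma$ and $\sigma^-$). The difference is only in the packaging of the sign argument. You isolate the four boundary identities at $\delta_\pm=\delta(\lambda,\mu,p/q)$, $\delta(\lambda,\mu,(p/q)^-)$ and transport them to an arbitrary $\delta$ in the admissible interval by strict monotonicity of $t\mapsto\phi_{\lambda,\mu,t,p/q}(y)$; your positivity bound $\Phi_{p/q}(\lambda,\mu,x)<\lambda/\big((1-\lambda)(1-\mu)\big)$ for $x\le 0$, $\mu<1$, is valid. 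The paper instead proves, for the given $\delta$, exact factorizations such as \eqref{eq7}, $(1-\lambda)\phi(0)=\mu\big(1+(\mu-1)\sigma^-\big)\big(\delta-\delta(\lambda,\mu,(p/q)^-)\big)$, and \eqref{eq8}, $(1-\lambda)\phi(0^-)=\big(1+(\mu-1)\sigma\big)\big(\delta-\delta(\lambda,\mu,p/q)\big)$ (and the analogous ones for $\phi(1-\tfrac pq)-\tfrac{1-\delta}{\lambda}$ and its left limit), which contain your monotonicity-in-$\delta$ and the vanishing at the endpoints in one stroke; the paper's verification that $1+(\mu-1)\sigma^\pm>0$ plays exactly the role of your bound on $\Phi$. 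So the two arguments are equivalent reorganizations of the same computation, and yours is sound.

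Two small corrections to your sketch. First, the attribution of $\sigma_\pm$ to the four special values is partly switched: by \eqref{eq5} one has $\Phi_{p/q}\big(\lambda,\mu,\tfrac pq-1\big)=\dfrac{\sigma^-}{1-\lambda}$, so this value serves the identity $\phi_{\lambda,\mu,\delta_-,p/q}\big(1-\tfrac pq\big)=\dfrac{1-\delta_-}{\lambda}$ at $\delta_-$; it is the value $\Phi_{p/q}\big(\lambda,\mu,-\tfrac{q-p-1}{q}\big)=\Phi_{p/q}\big(\lambda,\mu,(\tfrac pq-1)^+\big)=\dfrac{\sigma}{1-\lambda}$ of \eqref{eq6} that ``rebuilds'' the full $\sigma_+$ and yields the identity for $\phi\big((1-\tfrac pq)^-\big)$ at $\delta_+$. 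As written, your parenthetical claim contradicts the (correct) boundary identities you state, so the bookkeeping of which boundary terms survive must be fixed when the computation is carried out. Second, the irrational counterpart you invoke for these boundary values is Lemma 3.1, not Lemma 4.1.
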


\begin{proof}
Set $ \sigma = \sigma(\lambda, \mu , {p\over q})$ and $ \sigma^{-}= \sigma(\lambda,\mu,({p\over q})^{-})$. We first show that
\begin{equation}
\sigma = { S + \lambda^{q-1}\mu^{p-1}\over 1 -\lambda^q\mu^p}\quad \hbox{\rm and}\quad
\sigma^- = { S + \lambda^{q}\mu^{p-1}\over 1 -\lambda^q\mu^p},
\label{sigma}
\end{equation}
where we recall the notation 
$$
S  = \sum_{k=1}^{q-2}  \left( \left\lfloor (k+1) \frac{p}{q} \right\rfloor   -  \left\lfloor k \frac{p}{q} \right\rfloor   \right)\lambda^{k}\mu^{\lfloor k{p\over q}\rfloor  }
$$
from Theorem 1. By Definition 2, we have 
$$
 \sigma =  \sum_{k\ge 1} \left( \lf (k+1){p\over q}\rf -\lf k {p\over q}\rf \right)\lambda^k \mu^{\lf k {p\over q}\rf}.
 $$
 Observe that  $\lf (k+q){p\over q}\rf = \lf k{p\over q}\rf +p$. Splitting the above sum over $k$ according to the various classes of $k$ modulo $q$, we obtain the first formula
 $$
 \sigma = {1\over 1 -\lambda^q \mu^p}\sum_{k= 1}^q \left( \lf (k+1){p\over q}\rf -\lf k {p\over q}\rf \right)\lambda^k \mu^{\lf k {p\over q}\rf} = { S + \lambda^{q-1}\mu^{p-1}\over 1-\lambda^q\mu^p} .
 $$
 Similarly, we have
$$
 \begin{aligned}
 \sigma^-  & =  \sum_{k\ge 1} \left(\left \lf ((k+1)p/ q)^-\right\rf - \left \lf (k p/ q)^- \right\rf \right)\lambda^k \mu^{\lf (k p/ q)^-\rf}
 \\
  & = {1\over 1 -\lambda^q \mu^p}\sum_{k= 1}^q   \left(\left \lf ((k+1)p/ q)^-\right\rf - \left \lf (k p/ q)^- \right\rf \right)\lambda^k \mu^{\lf (k p/ q)^-\rf}
= { S + \lambda^{q}\mu^{p-1}\over 1-\lambda^q\mu^p} .
  \end{aligned}
 $$
 
 Now, we establish the formulae
\begin{eqnarray}
 \Phi_{p/q}(\lambda, \mu , 0) &=& {\lambda(1+ \mu \sigma^-)\over 1-\lambda}, 
 \label{eq3}
 \\
\Phi_{p/q}\left(\lambda, \mu , (-1)^+\right)  = 
   \Phi_{p/q}\left(\lambda, \mu ,- 1+ {1\over q} \right)
& = &  {\lambda \sigma \over 1-\lambda},
\label{eq4}
  \\
  \Phi_{p/q}\left(\lambda, \mu , {p\over q} -1\right) &=&  {\sigma^-\over 1-\lambda}, 
  \label{eq5}
 \\
  \Phi_{p/q}\left(\lambda, \mu , \left({p\over q}-1\right)^+ \right) = 
   \Phi_{p/q}\left(\lambda, \mu , {p\over q}-1+{1\over q} \right)
 & =& {\sigma\over 1 -\lambda}.
  \label{eq6}
  \end{eqnarray}
 To that purpose, we observe that the function $x \mapsto \Phi_{p/q}(\lambda, \mu,x)$ is constant on each interval $({n\over q},{n+1\over q}], n\in \Z$,  and we use formula \eqref{eq2}. Gathering  as above the various classes of  $k$ modulo $q$, we obtain the sums
 $$
 \begin{aligned}
{1-\lambda \over \lambda}\Phi_{p/q}(\lambda, \mu , 0)= &
\sum_{k \ge 0 } \left( \left\lc (k+1){p\over q} \right\rc - \left\lc k {p\over q}  \right\rc \right) \lambda^k \mu^{\lc k{p\over q} \rc}
\\
=&
{\sum_{k = 0 }^{q-1} \left( \left\lc (k+1){p\over q} \right\rc - \left\lc k {p\over q}  \right\rc \right) \lambda^k \mu^{\lc k{p\over q} \rc}
\over 1 -\lambda^q\mu^p}
\\
=& 
{1 + \mu\sum_{k = 1 }^{q-2} \left( \left\lf (k+1){p\over q} \right\rf - \left\lf k {p\over q}  \right\rf \right) \lambda^k \mu^{\lf k{p\over q} \rf}
\over 1 -\lambda^q\mu^p}
={1 + \mu S \over 1 -\lambda^q\mu^p} = 1 + \mu \sigma^-,
\end{aligned}
$$
since $ \left\lc k {p\over q}  \right\rc =  \left\lf k {p\over q}  \right\rf  +1$ for $1 \le k \le q-1$
and $\left\lc 0 {p\over q}  \right\rc =0$. Similarly, we have the equalities
$$ 
\begin{aligned}
{1-\lambda \over \lambda}\Phi_{p/q}(\lambda, \mu , {1\over q}-1)= &
\sum_{k \ge 0 } \left( \left\lc {(k+1)p+1\over q} \right\rc - \left\lc {k p+1\over q}  \right\rc \right) \lambda^k \mu^{\lc {kp+1\over q} \rc-1}
\\
=&
{\sum_{k = 0 }^{q-1} 
 \left( \left\lc {(k+1)p+1\over q} \right\rc - \left\lc {k p+1\over q}  \right\rc \right) \lambda^k \mu^{\lc {kp+1\over q} \rc -1}
\over 1 -\lambda^q\mu^p}
\\
=& 
{\lambda^{q-1}\mu^{p-1} + \sum_{k = 1 }^{q-2} \left( \left\lf (k+1){p\over q} \right\rf - \left\lf k {p\over q}  \right\rf \right) \lambda^k \mu^{\lf k{p\over q} \rf}
\over 1 -\lambda^q\mu^p}
\\
= & {\lambda^{q-1}\mu^{p-1} + S \over 1 -\lambda^q\mu^p} = \sigma,
\end{aligned}
$$
since $ \left\lc {k p+1\over q}  \right\rc =  \left\lc { k p\over q}  \right\rc   =  \left\lf {k p\over q}  \right\rf +1$ for $1 \le k \le q-1$. For the value $x={p\over q}-1$, we find
 $$
 \begin{aligned}
{1-\lambda \over \lambda}\Phi_{p/q}(\lambda, \mu , {p\over q}-1)= &
\sum_{k \ge 0 } \left( \left\lc (k+1){p\over q} +{p\over q}-1\right\rc - \left\lc k {p\over q} + {p\over q}-1 \right\rc \right) \lambda^k \mu^{\lc k{p\over q} +{p\over q}-1\rc}
\\
=&
{1\over \lambda \mu}\sum_{k \ge 1 } \left( \left\lc (k+1){p\over q} \right\rc - \left\lc k {p\over q}  \right\rc \right) \lambda^k \mu^{\lc k{p\over q} \rc}
\\
= &
{ (1+ \mu \sigma^-)-1 \over \lambda \mu} = {\sigma^-\over \lambda},
\end{aligned}
$$
taking  again the  computations used for $\Phi_{p/q}(\lambda, \mu , 0)$. Finally, we get
 $$
 \begin{aligned}
{1-\lambda \over \lambda}\Phi_{p/q}\left(\lambda, \mu , {p+1\over q}-1\right)= &
\sum_{k \ge 0 } \left( \left\lc {(k+2)p+1\over q} \right\rc - \left\lc {(k +1)p +1\over q}  \right\rc \right) \lambda^k \mu^{\lc {(k+1)p+1\over q} \rc-1}
\\
=&
{1\over \lambda }\sum_{k \ge 1 } \left( \left\lc{ (k+1)p+1\over q} \right\rc - \left\lc {k p+1\over q} \right\rc \right) \lambda^k \mu^{\lc {kp+1\over q} \rc-1}
 = {\sigma\over \lambda},
\end{aligned}
$$
by the above computation of $\Phi_{p/q}(\lambda, \mu , {1\over q}-1)$. The formulae \eqref{eq3} to \eqref{eq6} are established.

 We now use Lemma 2.2 in order to estimate values of $\phi$. We have
   $$
   \phi(0)= {\delta \over 1 - \lambda} - { \delta -\mu ( \lambda + \delta -1)\over \lambda} \Phi_{p/q}(\lambda, \mu , 0).
 $$
Then \eqref{eq3} yields
\begin{multline}
\qquad (1-\lambda)\phi(0) = 
 \delta - (\delta -\mu ( \lambda + \delta -1))(1+\mu \sigma^-) 
 \\
=
 \mu\left( \delta ( 1 + (\mu-1)\sigma^-)- (1-\lambda)(1+ \mu \sigma^-)\right)
= \mu(1 + (\mu-1)\sigma^-)\left( \delta - \delta\left(\lambda,\mu , ({p/ q})^-\right) \right),
 \label{eq7}
 \end{multline} 
 since $\delta(\lambda,\mu, (p/q)^-) = (1-\lambda) (1+ \mu \sigma^-)/(1 + (\mu-1)\sigma^-)$. Observe now that the factor $1 + (\mu-1)\sigma^-$ is always positive. Indeed, we deduce from Lemma 2.1 and its corollary that
 $$
 0 \le \sigma^- < {1-\lambda\over \lambda \mu} \lim_{\rho\nearrow r_{\lambda,\mu}}\Psi_\rho(\lambda,\mu) = \begin{cases}   {\lambda\over 1 -\lambda\mu}  \qquad  if \quad  \lambda \mu <1, 
\\
 +\infty   \qquad   if  \quad \lambda \mu \ge 1.
\end{cases} 
 $$
 Therefore $1 + (\mu-1)\sigma^-$ is bounded from below by $1 $ when $\mu \ge 1$ and by  $(1-\lambda)/( 1-\lambda\mu)$ when $\mu <1$.
 It follows that $\phi(0)$ is $\ge 0$  if and only if $ \delta \ge \delta(\lambda,\mu , (p/q)^-)$.

 For the value $\phi(0^-)$,  observe that $-\{ y\}=-1-y$ tends to $-1$ from above when 
 $y<0$ tends to 0. Lemma 2.2 provides now the formula
 $$
   \phi(0^-)= -1+ {\delta \over 1 - \lambda} - { \delta -\mu ( \lambda + \delta -1)\over \lambda} \Phi_{p/q}(\lambda, \mu , (-1)^+).
 $$
Then \eqref{eq4} yields
\begin{multline}
\qquad (1-\lambda)\phi(0^-) = 
 ( \lambda + \delta -1)(1+\mu \sigma) -\delta\sigma
 \\
= 
  \delta ( 1 + (\mu-1)\sigma)- (1-\lambda)(1+ \mu \sigma)
 =  (1 + (\mu-1)\sigma)\left( \delta - \delta\left(\lambda,\mu , p/ q\right) \right).
 \label{eq8}
 \end{multline}
 It follows that $\phi(0^-)$ is negative if and only if $\delta  < \delta\left(\lambda,\mu , p/ q\right)$.
 
 We now  deal with the lower bound at the point $1-p/q$. Using Lemma 2.2 and \eqref{eq5}, we find
 $$
\begin{aligned}
& \phi(1-{p\over q})  = {\delta\over 1 -\lambda} - {\delta- \mu(\lambda + \delta -1)\over \lambda}
\Phi_\rho( \lambda,\mu, {p\over q} -1) 
={\delta\over 1 -\lambda} - {\delta- \mu(\lambda + \delta -1)\over \lambda}
{\sigma^-\over 1 -\lambda}
\\
& = { \delta(\lambda +(\mu-1)\sigma^-) -(1-\lambda)\mu\sigma^-\over \lambda(1-\lambda)}
={(1-\lambda)(1-\delta) + \delta(1+(\mu-1)\sigma^-) -(1-\lambda)(1+\mu\sigma^-)\over \lambda(1-\lambda)} 
\\
& \ge {1-\delta\over \lambda},
\end{aligned}
$$
since the expression 
$$
 \delta(1+(\mu-1)\sigma^-) -(1-\lambda)(1+\mu\sigma^-)= (1+(\mu-1)\sigma^-)(\delta - \delta(\lambda,\mu,(p/q)^-)) 
 $$
 appearing above in the numerator is $\ge 0$. 
 
 The computations are similar for the left limit at the point $1-p/q$. Using \eqref{eq6},  we find
 $$
\begin{aligned}
& \phi((1-{p\over q})^-)  = {\delta\over 1 -\lambda} - {\delta- \mu(\lambda + \delta -1)\over \lambda}
\Phi_\rho( \lambda,\mu, ({p\over q} -1)^+) 
={\delta\over 1 -\lambda} - {\delta- \mu(\lambda + \delta -1)\over \lambda}
{\sigma\over 1 -\lambda}
\\
& = { \delta(\lambda +(\mu-1)\sigma) -(1-\lambda)\mu\sigma\over \lambda(1-\lambda)}
={(1-\lambda)(1-\delta) + \delta(1+(\mu-1)\sigma) -(1-\lambda)(1+\mu\sigma)\over \lambda(1-\lambda)} 
\\
& < {1-\delta\over \lambda},
\end{aligned}
$$
since
$$
 \delta(1+(\mu-1)\sigma) -(1-\lambda)(1+\mu\sigma)= (1+(\mu-1)\sigma)(\delta - \delta(\lambda,\mu,p/q)) < 0.
 $$
 
 \end{proof}

\section{The lift $F$}

 Let $F : \R \mapsto \R$ be the real function defined by
$$
F(x) = F_{\lambda,\mu,\delta}(x)= 
 \begin{cases} \lambda x + \delta  + (1-\lambda)\lfloor x\rfloor     \quad  if \quad 0\le \{x \}< \eta , 
\\
\mu(\lambda x + \delta-1)  +1  +(1-\lambda \mu) \lfloor x\rfloor       \quad   if  \quad \eta \le \{x\} <1.
\end{cases}
$$
Then $F $ is a \hbox{\it lift} of $f$, meaning  that $F$ satisfies the following properties:
\\
(i) For every $x\in \R$, we have
$$
\{F(x)\}=f(\{x\}).
$$
(ii) $F(x+1)=F(x)+1$, for every $x\in \R$.\\
(iii) $F$ is an increasing  function on $\R$ which is continuous on each interval of $\R \setminus \Z$ and right continuous everywhere.

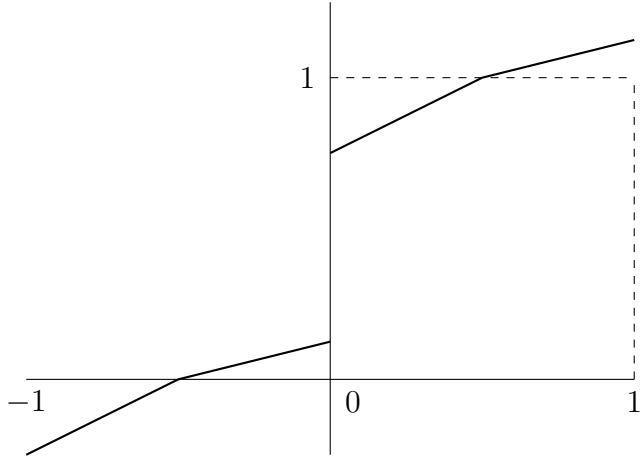
\begin{figure}[ht] 
\begin{tikzpicture}
\draw (-4,0) -- (4,0);
\draw (0,-1) -- (0,5);
\draw[dashed]  (4,0) -- (4,4);
\draw[dashed]  (0,4) -- (4,4);
\draw[thick]   (0,3) -- (2,4);
\draw[thick]   (2,4) -- (4, 4.5);
\draw[thick]   (-4,-1) -- (-2,0);
\draw[thick]  (-2,0) -- (0,0.5);
\node   at (0.3 ,-0.3) {$0$};
\node   at (-0.3,4) {$1$};
\node   at (-4,-0.3){$-1$};
\node   at (4,-0.3) {$1$};
\end{tikzpicture}
\caption{ Plot of $F_{1/2, 1/2, 3/4}(x) $ in the interval $-1\le x < 1$ }
\end{figure}

Let $x \in \R$ and let $(x_k)_{k\ge 0}$ be the {\it forward orbit}  of $x$ by $F$, 
where $x_k = F^k(x)$ stands for the $k$-th iterate of the function $F$.
When $\{x\}$ belongs to $C=\cap_{k\ge 1}f^k(I)$, we denote moreover  by $(x_{-k})_{k\ge 0}$ the  {\it backward orbit} of $x$ by $F$, where $x_{-k}= F^{-k}(x)$ is the $k$-th preimage of $x$ by $F$. This makes sense since $\{x_{-k}\} = f^{-k}(\{ x\})$ is the $k$-th preimage of $\{ x\}$ by $f$ and $x_{-k-1}= F^{-1}(x_{-k})$ is the inverse image of $x_{-k}$ by the injective map $F$, noting that $x_{-k}\in F(\R) = f(I)+\Z$ for all $k\ge 0$.

A fundamental property is that any forward orbit $(x_k)_{k\ge 0}$  can be computed  explicitely in terms of  its initial point $x$ and of the associated  {\it symbolic   sequence }  
$$
\lf x_{k+1}\rf - \lf x_k \rf \in \{ 0, 1\}. 
$$
  It turns out that, for any orbit, this symbolic sequence  is either periodic when the rotation number $\rho$ is rational, or a sturmian sequence of slope $\rho$  in the irrational case. We have the following explicit recursion formulae which motivate our definition of the conjugation $\phi$:

\begin{lemma}
 Let $x\in \R$ and let $(x_k)_{k\ge 0}$ be the forward orbit of $x$ by $F$.  For any non-negative integers $l$ and $n$, we have the relation
$$
\begin{aligned}
x_{l+n} = \lfloor x_{l+n}\rfloor   + \lambda^n\mu^{\lfloor x_{l+n}\rfloor  - \lfloor x_l\rfloor  } &\left( \{ x_l\}  - {1-\delta\over \lambda}\right)  + {1-\delta\over \lambda} +
\\
& \sum_{k=0}^{n-1} \lambda^k\mu^{\lfloor x_{l+n}\rfloor   -\lfloor  x_{l+n-k}\rfloor  }\left( {\lambda+ \delta -1\over \lambda} + \lfloor x_{l+n-(k+1)}\rfloor  -\lfloor x_{l+n-k}\rfloor  \right).
\end{aligned}
$$
\\
Moreover, assume that $\{x\} \in C$.  Let $(x_{-k})_{k\ge 0}$ be the backward orbit of $x$ by $F$ and  assume that there exist two real numbers $y$ and $\rho$ with $0< \rho <1$ such that $\lfloor x_k\rfloor   = \lfloor y+ k\rho\rfloor  $ for  all  integer $k\le 0$. Then, we have the series expansion
$$
x=\lfloor x\rfloor  + {1-\lambda\over \delta} +
\sum_{k\ge 0} \lambda^k\mu^{\lfloor x\rfloor   -\lfloor y-k\rho\rfloor  }\left( {\lambda+ \delta -1\over \lambda} + \lfloor y-(k+1)\rho\rfloor  -\lfloor y-k\rho\rfloor  \right)=\phi_{\lambda,\mu,\delta,\rho}(y).
$$
\end{lemma}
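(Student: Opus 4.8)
The whole lemma reduces to a single one-step identity for the lift $F$, which I would establish first. Writing $x=\lfloor x\rfloor+\{x\}$ and using $\eta=(1-\delta)/\lambda$, a direct computation gives $F(x)-\lfloor x\rfloor=\lambda\{x\}+\delta\in[\delta,1)\subset(0,1)$ on the first branch, and $F(x)-\lfloor x\rfloor-1=\mu(\lambda\{x\}+\delta-1)\in[0,\mu(\lambda+\delta-1))\subset[0,1)$ on the second branch, the last inclusion using $\delta<d_{\lambda,\mu}$. Hence the digit $\varepsilon:=\lfloor F(x)\rfloor-\lfloor x\rfloor$ is $0$ precisely when $\{x\}\in[0,\eta)$ and $1$ when $\{x\}\in[\eta,1)$, and in both cases
$$F(x)=\lfloor x\rfloor+1+\lambda\mu^{\varepsilon}\bigl(\{x\}-\eta\bigr),\qquad \varepsilon=\lfloor F(x)\rfloor-\lfloor x\rfloor .$$
This holds for every $x\in\R$, hence also along a backward orbit wherever it is defined.

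For the first (forward) formula I would induct on $n\ge0$; the case $n=0$ is trivial. For the step from $n$ to $n+1$, I apply the case-$n$ formula to the forward orbit $(x_{l+1+j})_{j\ge0}$ of the point $x_{l+1}=F(x_l)$, and then substitute $\{x_{l+1}\}-\eta=\bigl(\tfrac{\lambda+\delta-1}{\lambda}+\lfloor x_l\rfloor-\lfloor x_{l+1}\rfloor\bigr)+\lambda\mu^{\varepsilon_l}(\{x_l\}-\eta)$ coming from the one-step identity. Since $\varepsilon_l=\lfloor x_{l+1}\rfloor-\lfloor x_l\rfloor$, the homogeneous term absorbs the extra factor $\lambda\mu^{\varepsilon_l}$ and becomes the homogeneous term for $n+1$, while the constant part of the substitution supplies exactly the missing $k=n$ summand; the other summands are already the $k=0,\dots,n-1$ terms for $n+1$. (Alternatively, set $g_k=\{x_k\}-\eta$, deduce from the one-step identity the affine recursion $g_{k+1}=(1-\eta-\varepsilon_k)+\lambda\mu^{\varepsilon_k}g_k$, solve it by iteration over $l\le k\le l+n-1$, and reindex; the $\mu$-exponents come out right because $\sum_j\varepsilon_j$ telescopes to a difference of floors.)

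For the second (backward) formula, recall that when $\{x\}\in C$ the backward orbit $(x_{-k})_{k\ge0}$ exists and each point satisfies $x_{-k}=F(x_{-k-1})$, so $(x_{-n},x_{-n+1},\dots)$ is a genuine forward $F$-orbit. Applying the first formula to it (with $l=0$ and $n$ steps) yields
$$x=\lfloor x\rfloor+\eta+\lambda^n\mu^{\lfloor x\rfloor-\lfloor x_{-n}\rfloor}\bigl(\{x_{-n}\}-\eta\bigr)+\sum_{k=0}^{n-1}\lambda^k\mu^{\lfloor x\rfloor-\lfloor x_{-k}\rfloor}\Bigl(\tfrac{\lambda+\delta-1}{\lambda}+\lfloor x_{-(k+1)}\rfloor-\lfloor x_{-k}\rfloor\Bigr).$$
Now I would substitute the hypothesis $\lfloor x_{-k}\rfloor=\lfloor y-k\rho\rfloor$ for all $k\ge0$ (in particular $\lfloor x\rfloor=\lfloor y\rfloor$) and let $n\to\infty$. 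The finite sum converges to the series defining $\phi_{\lambda,\mu,\delta,\rho}(y)$, and the remainder term tends to $0$ because $\{x_{-n}\}-\eta$ stays bounded while $\lambda^n\mu^{\lfloor y\rfloor-\lfloor y-n\rho\rfloor}=O\bigl((\lambda\mu^\rho)^n\bigr)\to0$, the latter being exactly the condition $\lambda\mu^\rho<1$ under which $\phi_{\lambda,\mu,\delta,\rho}$ is defined. Comparing with Definition 3 concludes.

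I expect the main obstacle to be the index bookkeeping in the inductive step: checking that the shift $l\mapsto l+1$ turns the case-$n$ formula into the case-$(n+1)$ formula with the correct $\mu$-exponents $\lfloor x_{l+n}\rfloor-\lfloor x_{l+n-k}\rfloor$, which hinges on the telescoping identity $\varepsilon_{l}+\cdots+\varepsilon_{l+m-1}=\lfloor x_{l+m}\rfloor-\lfloor x_l\rfloor$. The convergence estimate for the tail is routine once one separates the cases $\mu\ge1$ and $\mu<1$ (for $\mu<1$ one has $\lambda\mu^\rho<\lambda<1$ automatically).
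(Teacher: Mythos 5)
Your proposal is correct and follows essentially the same route as the paper: both rest on the one-step affine identity $x_{k+1}=\lambda\mu^{\varepsilon_k}x_k+b_k$ for the lift (the paper composes these maps and uses an Abel summation, while you normalize to $g_k=\{x_k\}-\eta$ and solve the resulting recursion by induction, which is only a bookkeeping difference), and both obtain the series expansion by viewing the backward orbit as a forward orbit from $x_{-n}$ and letting $n\to\infty$ under $\lambda\mu^{\rho}<1$. Your explicit tail estimate $\lambda^{n}\mu^{\lfloor y\rfloor-\lfloor y-n\rho\rfloor}=O((\lambda\mu^{\rho})^{n})$ is a welcome detail the paper leaves implicit.
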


\begin{proof}
Notice that $\lfloor x_{k+1}\rfloor  -\lfloor x_k\rfloor  \in \{ 0,1\}$ for any integer $k$. 
Put
$$
\begin{aligned}
a_k := &\lambda \mu^{ \lfloor x_{k+1}\rfloor  -\lfloor x_k\rfloor  } = \begin{cases} \lambda  \quad  \hbox{\rm if} \quad   \lfloor x_k\rfloor  = \lfloor x_{k+1}\rfloor  ,
\\
 \lambda \mu  \quad   \hbox{\rm if } \quad \lfloor x_{k+1}\rfloor   =\lfloor x_k\rfloor  +1
\end{cases}
\\
b_k  :=&  1 + \mu^{\lfloor x_{k+1}\rfloor  -\lfloor x_k\rfloor  } (\delta -1) +(1 -\lambda\mu^{\lfloor x_{k+1}\rfloor  - \lfloor x_k\rfloor  })\lfloor x_k\rfloor   
\\
=  & \begin{cases} \delta +(1-\lambda)\lfloor x_k\rfloor   \quad  \hbox{\rm if} \quad   \lfloor x_k\rfloor  = \lfloor x_{k+1}\rfloor  ,
\\
\mu (\delta - 1) +1+(1-\lambda \mu) \lfloor x_k\rfloor   \quad   \hbox{\rm if } \quad \lfloor x_{k+1}\rfloor   =\lfloor x_k\rfloor  +1,
\end{cases}
\end{aligned}
$$
so that  $F(x_k)= a_k x_k + b_k$. Thus $x_{k+1}= a_kx_k+ b_k$. Let $n$ be a positive integer. Composing these  affine relations for $k= l, \dots , l+n-1$, we  obtain by induction on $n$  the formula
$$
x_{l+n} = a_{l+n-1} \cdots a_l x_l + \sum_{k=0}^{n-1}  a_{l+n-1} \cdots a_{l+k+1}  b_{l+k} 
=v_0 x_l + \sum_{k=0}^{n-1} v_{k+1} b_{l+k}, 
$$
where we have set 
$$
v_k : = a_{l+n-1} \cdots a_{l+k} = \lambda^{n-k} \mu^{ \lfloor x_{l+n}\rfloor   -\lfloor x_{l+k}\rfloor  } \quad \hbox{\rm for} \quad 
0 \le k \le n.
$$
We display the terms
$$
v_{k+1} b_{l+k}= v_{k+1} +{\delta -1\over \lambda} v_k + (v_{k+1}-v_k)\lfloor x_{l+k}\rfloor   
$$
appearing in the above sum. Note that   $v_n=1$. Thus, by Abel's summation, we find
$$
\begin{aligned}
x_{l+n} & = v_0 x_l + \sum_{k=0}^{n-1} v_{k+1} + {\delta -1\over \lambda} v_k + (v_{k+1}-v_k) \lf x_{l+k} \rf
\\
& = v_0\left( x_l-\lfloor x_l\rfloor  +{\delta -1\over \lambda}\right) + \lfloor x_{l+n-1}\rfloor   +1 + {\lambda + \delta -1\over \lambda} \sum_{k=1}^{n-1} v_k + \sum_{k=1}^{n-1} v_k(\lfloor x_{l+k-1}\rfloor   -\lfloor x_{l+k}\rfloor  )
\\
&= v_0\left( \{ x_l\}+{\delta -1\over \lambda}\right)+ \lfloor x_{l+n}\rfloor   + {1-\delta\over \lambda} + \sum_{k=1}^nv_k\left( {\lambda +\delta-1\over \lambda} + \lfloor x_{l+k-1}\rfloor  - \lfloor x_{l+k}\rfloor  \right).
\end{aligned}
$$
Replacing the index of summation $k$  by $n-k$ in the last sum, we find
$$
\begin{aligned}
\sum_{k=1}^nv_k & \left( {\lambda +\delta-1\over \lambda} + \lfloor x_{l+k-1}\rfloor  - \lfloor x_{l+k}\rfloor  \right)
= \sum_{k=0}^{n-1} v_{n-k}\left( {\lambda +\delta-1\over \lambda} + \lfloor x_{l+n-(k+1)}\rfloor  - \lfloor x_{l+n-k}\rfloor  \right)
\\
& =
\sum_{k=0}^{n-1} \lambda^k\mu^{\lfloor x_{l+n}\rfloor  -\lfloor x_{l+n-k}\rfloor  }\left( {\lambda +\delta-1\over \lambda} + \lfloor x_{l+n-(k+1)}\rfloor  - \lfloor x_{l+n-k}\rfloor  \right).
\end{aligned}
$$
The first assertion  is established.

For the second one, observe that the first assertion remains valid for $l$ negative when $\{x\}$ belongs to $C$, since then $x$ has a  preimage $F^{l}(x)$ and we apply the formula at this point. Choosing $l=-n$ and letting $n$ tend to infinity, we get the stated series expansion.

\end{proof}

The next result is crucial in our approach. It shows that $\phi$ satisfies a functional equation as in Theorem 3.
\begin{lemma}
Let $\lambda$, $\mu$, $\delta$ and $\rho$ be real numbers such that $0< \lambda <1, \mu >0$, $0< \rho< r_{\lambda,\mu}$.  We assume that $\delta = \delta(\lambda,\mu,\rho)$ when $\rho$ is irrational, or that $\delta$ belongs to the interval \eqref{eq12} 
 when  $\rho=p/ q$ is rational. Put $\phi= \phi_{\lambda,\mu,\delta,\rho}$ and $F=F_{\lambda,\mu,\delta}$. 
 Then,  the relations
$$
\lf \phi(y) \rf = \lf y\rf \qquad \hbox{\rm and } \qquad F( \phi(y)) = \phi(y+\rho)
$$
hold for any real number $y$. Thus, the $F$-orbit $(x_k)_{k\in \Z}$ of $x =\phi(y)$  is given by  the sequence
$$
x_k=\phi(y + k\rho), \, \forall k \in\Z.
$$
\end{lemma}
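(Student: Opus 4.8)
The plan is to reduce both assertions to one purely formal identity satisfied by the series defining $\phi$, and then to read off which affine branch of $F$ is active at the point $\phi(y)$ with the help of Lemmas 3.1 and 3.2. As a first step I would prove $\lfloor\phi(y)\rfloor=\lfloor y\rfloor$: since $\phi(y+1)=\phi(y)+1$ by the corollary to Lemma 2.2, the quantity $\lfloor\phi(y)\rfloor-\lfloor y\rfloor$ is $1$-periodic, so it suffices to show $0\le\phi(y)<1$ for $y\in[0,1)$. By that same corollary $\phi$ is non-decreasing on $[0,1)$ and strictly increasing there when $\rho$ is irrational. If $\rho$ is irrational, Lemma 3.1 gives $\phi(0)=0$, hence $\phi(1)=\phi(0)+1=1$, and strict monotonicity gives $0\le\phi(y)<1$ on $[0,1)$; if $\rho=p/q$, Lemma 3.2 gives $0\le\phi(0)$ and $\phi(1^-)=\phi(0^-)+1<1$, which again traps $\phi$ strictly between $0$ and $1$ on $[0,1)$.

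Next I would establish, for \emph{every} real $y$ and with no hypothesis on $\delta$ beyond the convergence condition $\lambda\mu^\rho<1$ (which holds because $\rho<r_{\lambda,\mu}$), the recursion
\[
\phi(y+\rho)=\lfloor y\rfloor+1+\lambda\,\mu^{\,\lfloor y+\rho\rfloor-\lfloor y\rfloor}\Bigl(\phi(y)-\lfloor y\rfloor-\tfrac{1-\delta}{\lambda}\Bigr).
\]
This comes from Definition 3 by writing $(y+\rho)-k\rho=y-(k-1)\rho$ and substituting $k\mapsto k+1$ in the series for $\phi(y+\rho)$: the term newly indexed by $k=-1$ equals $\tfrac{\lambda+\delta-1}{\lambda}+\lfloor y\rfloor-\lfloor y+\rho\rfloor$; one then factors $\lambda\mu^{\lfloor y+\rho\rfloor-\lfloor y\rfloor}$ out of the remaining ($k\ge0$) sum, which becomes $\phi(y)-\lfloor y\rfloor-\tfrac{1-\delta}{\lambda}$; and one simplifies using $\tfrac{1-\delta}{\lambda}+\tfrac{\lambda+\delta-1}{\lambda}=1$. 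Absolute convergence of the series (the general term is $O((\lambda\mu^\rho)^k)$) justifies the rearrangement.

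It then remains to match the right-hand side with $F(\phi(y))$. Writing $\phi(y)=\lfloor y\rfloor+\{\phi(y)\}$ (legitimate by the first step), the branch of $F$ at $\phi(y)$ is selected by comparing $\{\phi(y)\}$ with $\eta=\tfrac{1-\delta}{\lambda}$, while $\lfloor y+\rho\rfloor-\lfloor y\rfloor$ is $0$ or $1$ according as $\{y\}<1-\rho$ or $\{y\}\ge1-\rho$ (recall $0<\rho<1$). The crux is the equivalence $\{\phi(y)\}<\eta\iff\{y\}<1-\rho$. Since $\{\phi(y)\}=\{\phi(\{y\})\}$, this reduces to $y\in[0,1)$: for $y\in[0,1-\rho)$, monotonicity together with Lemma 3.1 ($\phi(1-\rho)=\eta$, plus strict increase when $\rho$ is irrational) or Lemma 3.2 ($\phi((1-p/q)^-)<\eta$) gives $0\le\phi(y)<\eta$; for $y\in[1-\rho,1)$ it gives $\eta\le\phi(1-\rho)\le\phi(y)<1$. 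Substituting into the recursion, when $\{\phi(y)\}<\eta$ the exponent is $0$ and the recursion reads $\phi(y+\rho)=\lambda\phi(y)+\delta+(1-\lambda)\lfloor y\rfloor$, i.e. the first line of the definition of $F$ at $\phi(y)$; when $\{\phi(y)\}\ge\eta$ the exponent is $1$ and it reads $\phi(y+\rho)=\mu(\lambda\phi(y)+\delta-1)+1+(1-\lambda\mu)\lfloor y\rfloor$, the second line. Hence $F(\phi(y))=\phi(y+\rho)$ in all cases. Applying this to every argument gives $F(x_k)=x_{k+1}$ for $x_k:=\phi(y+k\rho)$, $k\in\Z$, and injectivity of the lift $F$ makes this bi-infinite orbit unique, which is the last assertion.

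I expect the series reindexing to be routine; the genuine content --- and the only place where the hypothesis on $\delta$ is used --- is the branch-selection equivalence $\{\phi(y)\}<\eta\iff\{y\}<1-\rho$, which is exactly what Lemmas 3.1 and 3.2 are designed to deliver, as they locate (resp. sandwich) $\phi(1-\rho)$ at the threshold $\eta$. The only point requiring care is the rational case at $y=1-\rho=(q-p)/q$, a jump point of $\phi$, where one must use the left-hand value $\phi((1-p/q)^-)$ strictly below the threshold and $\phi(1-p/q)$ at or above it, precisely as recorded in Lemma 3.2.
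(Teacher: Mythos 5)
Your proposal is correct and follows essentially the same route as the paper: the same first step ($0\le\phi<1$ on $[0,1)$ via the corollary of Lemma 2.2 together with Lemma 3.1 in the irrational case and Lemma 3.2 in the rational case), the same branch-selection argument comparing $\phi(\{y\})$ with $\eta=\phi(1-\rho)$ (resp.\ the sandwich of Lemma 3.2 at the jump $1-p/q$), and the same series reindexing. The only difference is organizational --- you package the reindexing once as a universal recursion $\phi(y+\rho)=\lfloor y\rfloor+1+\lambda\mu^{\lfloor y+\rho\rfloor-\lfloor y\rfloor}\bigl(\phi(y)-\lfloor y\rfloor-\tfrac{1-\delta}{\lambda}\bigr)$ and then match it to the two branches of $F$, whereas the paper carries out the equivalent computation separately inside each of the two cases.
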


\eject

\begin{proof}
We first show that 
$0 \le \phi(y) <1$ when $0\le y <1$. 
In the case $\rho$ irrational, Lemma 3.1 gives
$\phi(0)= 0$. Thus $\phi(1) = \phi(0)+1 =1$ and the corollary of Lemma 2.2 asserts that the function $\phi$ is strictly increasing. Therefore 
$$
0 =\phi(0) \le \phi(y) <\phi(1) =1.
$$ 
In the rational case, the function $\phi$ is non-increasing and constant on each interval $[{n\over q},{n+1\over q}), n\in \Z$. 
Now,  we know that $\phi(0) \ge 0 $ and $\phi(0^-) <0$ by Lemma 3.2. Therefore 
$$
0 \le\phi(0) \le \phi(y) \le \phi(1^-) =\phi(0^-)+1 <1.
$$
For any $y\in \R$, we can write
$$
\phi(y) = \phi( \lf y \rf + \{ y\}) = \lf y \rf + \phi( \{y\}).
$$
We have thus proved that $\lf \phi(y) \rf  =\lf y \rf $ and $\{\phi(y)\} = \phi(\{ y\})$ for all real number $y$.

We now prove the relation $F( \phi(y)) = \phi(y+\rho)$.  By definition of $F$, we have to deal with two expressions for the value of $F( \phi(y)) $ depending whether the fractional part $\{ \phi(y)\}$ is smaller than $\eta = (1 -\delta )/ \lambda $ or not. But $\{ \phi(y)\} = \phi(\{ y \})$ and  Lemmae 3.1 and 3.2 yield that   $\phi(\{ y\})$ belongs the  interval $[0,\eta)$ when $\{ y\} < 1-\rho$,  and  to the other  interval $[\eta,1)$ when $\{ y\} \ge 1-\rho$, since $\phi$ is non-decreasing. The computation splits into two cases. 

Suppose first that $\{ y \} < 1 -  \rho$. Then $ \lf y+ \rho \rf = \lf y \rf$. Moreover,  $\{ \phi(y)\} = \phi(\{ y \}) < \eta $ by Lemmae 3.1 and 3.2 and the increasing monotonicity of the function $\phi$. Using the expression of $F$ in the intervals $[n, n+\eta), n\in\Z$, we obtain the equalities:
$$
\begin{aligned}
 F(\phi(y) ) & = \lambda \phi(y) + \delta + (1-\lambda) \lf \phi(y) \rf  
 = \delta + (1-\lambda) \lf y\rf 
 \\
 &  + \lambda \left(  \lfloor y\rfloor   + {1-\delta\over \lambda} + \sum_{k= 0}^{+\infty}\lambda^k\mu^{\lfloor  y\rfloor   -\lfloor y-k\rho\rfloor  }\left( {\lambda +\delta -1 \over \lambda} + \lfloor y-(k+1)\rho \rfloor   - \lfloor y-k\rho\rfloor  \right) \right)
\\
& = 
\lf y \rf + 1 + \sum_{k= 0}^{+\infty}\lambda^{k+1}\mu^{\lfloor  y\rfloor   -\lfloor y-k\rho\rfloor  }\left( {\lambda +\delta -1 \over \lambda} + \lfloor y-(k+1)\rho \rfloor   - \lfloor y-k\rho\rfloor  \right) 
\\
& = 
\lf y \rf + 1 + \sum_{k= 1}^{+\infty}\lambda^{k}\mu^{\lfloor  y\rfloor   -\lfloor y-(k-1)\rho\rfloor  }\left( {\lambda +\delta -1 \over \lambda} + \lfloor y-k\rho \rfloor   - \lfloor y-(k-1)\rho\rfloor  \right) 
\\
& = 
\lf y +\rho\rf + 1  
\\
&\qquad \qquad + \sum_{k= 1}^{+\infty}\lambda^{k}\mu^{\lfloor  y+ \rho \rfloor   -\lfloor y+\rho-k\rho\rfloor  }\left( {\lambda +\delta -1 \over \lambda} + \lfloor y+ \rho-(k+1)\rho \rfloor   - \lfloor y+\rho -k\rho\rfloor  \right) 
\\
& = 
\lf y +\rho\rf + 1 - {\lambda + \delta -1\over \lambda} 
\\
& \qquad \qquad 
+ \sum_{k= 0}^{+\infty}\lambda^{k}\mu^{\lfloor  y+ \rho \rfloor   -\lfloor y+\rho-k\rho\rfloor  }\left( {\lambda +\delta -1 \over \lambda} + \lfloor y+ \rho-(k+1)\rho \rfloor   - \lfloor y+\rho -k\rho\rfloor  \right) 
\\
& = \phi(y+ \rho).
\end{aligned}
$$

The case $\{ y\} \ge 1 -\rho$ is similar. Then $\lf y + \rho \rf = \lf y \rf + 1$ and $\{\phi(y)\} \ge \eta $ by Lemmae 3.1 and 3.2. We now use the expression of $F$ in the intervals $[n+\eta, n+1), n\in\Z$. We then obtain the equalities:

$$
\begin{aligned}
 F(\phi(y) ) & = \lambda \mu \phi(y) + \mu \delta +1 -\mu + (1-\lambda\mu ) \lf \phi(y) \rf  
 = \mu \delta +1 -\mu +  (1-\lambda\mu ) \lf y\rf 
 \\
 &  + \lambda \mu \left(  \lfloor y\rfloor   + {1-\delta\over \lambda} + \sum_{k= 0}^{+\infty}\lambda^k\mu^{\lfloor  y\rfloor   -\lfloor y-k\rho\rfloor  }\left( {\lambda +\delta -1 \over \lambda} + \lfloor y-(k+1)\rho \rfloor   - \lfloor y-k\rho\rfloor  \right) \right)
\\
& = 
\lf y \rf + 1 + \sum_{k= 0}^{+\infty}\lambda^{k+1}\mu^{\lfloor  y\rfloor  +1  -\lfloor y-k\rho\rfloor  }\left( {\lambda +\delta -1 \over \lambda} + \lfloor y-(k+1)\rho \rfloor   - \lfloor y-k\rho\rfloor  \right) 
\\
& = 
\lf y \rf + 1 + \sum_{k= 1}^{+\infty}\lambda^{k}\mu^{\lfloor  y\rfloor  +1  -\lfloor y-(k-1)\rho\rfloor  }\left( {\lambda +\delta -1 \over \lambda} + \lfloor y-k\rho \rfloor   - \lfloor y-(k-1)\rho\rfloor  \right) 
\\
& = 
\lf y +\rho\rf   +
 \sum_{k= 1}^{+\infty}\lambda^{k}\mu^{\lfloor  y+ \rho \rfloor   -\lfloor y+\rho-k\rho\rfloor  }\left( {\lambda +\delta -1 \over \lambda} + \lfloor y+ \rho-(k+1)\rho \rfloor   - \lfloor y+\rho -k\rho\rfloor  \right) 
\\
& = 
\lf y +\rho\rf - \left({\lambda + \delta -1\over \lambda} -1\right)
\\
& \qquad \qquad 
+ \sum_{k= 0}^{+\infty}\lambda^{k}\mu^{\lfloor  y+ \rho \rfloor   -\lfloor y+\rho-k\rho\rfloor  }\left( {\lambda +\delta -1 \over \lambda} + \lfloor y+ \rho-(k+1)\rho \rfloor   - \lfloor y+\rho -k\rho\rfloor  \right) 
\\
& = \phi(y+ \rho).
\end{aligned}
$$
\end{proof}

\section{Proof of Theorem 1}
Let $x \in \R$ and let $(x_k)_{k\ge 0}$ be the forward orbit of $x$ by $F$.  It is known (see \cite{RT}) that the limit 
$$
\rho_{\lambda,\mu , \delta} = \lim_{k \to \infty} \frac{x_k}{k} 
$$ 
exists and does not depend on the initial point $x$.  The number $\rho_{\lambda,\mu, \delta}$ is called   the {\it rotation number} of the map $f=f_{\lambda,\mu , \delta}$. 

Fix $\lambda$ and $\mu$ with $0< \lambda <1$, $\mu >0$. 
Let $\delta$ be a real number in the interval  $1-\lambda < \delta <d_{\lambda,\mu}$.
By the corollary of Lemma 2.1, the following alternative holds. Either $\delta$ belongs to the image of the interval $0< \rho<r_{\lambda,\mu}$ by the function $\rho \mapsto \delta(\lambda,\mu,\rho)$, or 
$$
 \delta\left(\lambda,\mu,(p/ q)^-\right) \le  \delta < \delta\left(\lambda,\mu , p/ q\right)
$$
for some rational number $p/q$ with $0< p/q <1$ (these intervals are the jumps of the increasing function $\rho \mapsto \delta(\lambda,\mu,\rho)$). In the latter case, Lemma 4.2 yields that $\rho_{\lambda,\mu,\delta}= p/q$. Indeed, we select an initial point  $x$ of the form  $x = \phi_{\lambda,\mu,\delta,p/q}(y) $ for an arbitrary  $y\in \R$, so that $x_k =  \phi_{\lambda,\mu,\delta,p/q}(y + k p/q) $ for every integer $k\ge 0$. Then,
$$
\rho_{\lambda,\mu , \delta} = \lim_{k \to \infty} \frac{x_k}{k} =  \lim_{k \to \infty} \frac{\lf x_k\rf}{k} =  \lim_{k \to \infty} \frac{\lf y + k{p\over q}\rf }{k}= {p\over q}.
$$

It remains to deal with parameters $\delta$ in the image, in other words $\delta = \delta(\lambda,\mu,\rho)$ for some $0< \rho <r_{\lambda,\mu}$. When $\rho$ is irrational, Lemma 4.2 yields as well  that  $\rho_{\lambda,\mu,\delta}= \rho$. When $ \rho = p/q$ is rational, we may use a general  argument of continuity. Proposition 5.7  in \cite{RT2} tells us that
the rotation number $\rho_{\lambda,\mu,\delta}$ is a continuous function of the parameter $\delta$. Thus
$$
\rho_{\lambda,\mu,\delta} = \lim_{\delta'\nearrow \delta}\rho_{\lambda,\mu,\delta'} = {p\over q},
$$
since we have already proved that $\rho_{\lambda,\mu,\delta'}$ is constant and equal to $ p/q$ when $\delta'$ is located in the right open  interval
$$
 \delta\left(\lambda,\mu,(p/ q)^-\right) \le  \delta' < \delta\left(\lambda,\mu , p/ q\right).
 $$
 
 We express now  $\delta(\lambda,\mu,p/q)$ and $ \delta\left(\lambda,\mu,(p/ q)^-\right) $ in term of the finite sum $S$. Recalling  formula \eqref{sigma},  we obtain
 $$
 \delta(\lambda,\mu,p/q)= {(1-\lambda) (1+ \mu \sigma)\over 1 + (\mu -1)\sigma}= 
{ (1-\lambda)(1 +\mu S + \lambda^{q-1}\mu^p(1-\lambda)) \over 1 +(\mu-1)S + \lambda^{q-1}\mu^{p-1}(\mu-\lambda\mu-1) }
$$
and 
$$
\delta\left(\lambda,\mu,(p/ q)^-\right)  = {(1-\lambda) (1+ \mu \sigma^-)\over 1 + (\mu -1)\sigma^-}= 
{(1-\lambda) (1+ \mu S)\over 1 +(\mu -1)S - \lambda^q\mu^{p-1}}.
$$

\section{Irrational rotation number}

We prove  part (i) of Theorem 3  and we give furthermore a description of the iterated images $f^n(I)$ when the rotation number $\rho$ is irrational.

In this case, the function $\phi$ is strictly increasing  on $\R$ and jumps at the points $l \rho + \Z, l \ge 1$. Put 
$$
\xi_l =  \phi(\{l\rho\}),  \quad{\rm and} \quad  \xi_l^- = \phi(\{l \rho\}^-), \quad l\ge 1.
$$
 All the intervals $[\xi_l^- , \xi_l), l\ge 1$,  are pairwise disjoint and contained in $I=(0,1)$.
 
\begin{proposition}
For any integer $n\ge 1$, we have the decomposition into disjoint intervals
$$
f^n(I)=  I \setminus\bigcup_{l= 1}^n[\xi_l^-,\xi_l), 
$$
and the formulae
$$
\begin{aligned}
\xi_l =&  f^l(0)  ={1-\delta\over \lambda}+  \sum_{k=0}^l \lambda^{l-k}\mu^{\lf l \rho \rf  - \lf k\rho\rf}\left( {\lambda+\delta -1\over \lambda} + \lf (k-1)\rho \rf - \lf k\rho\rf \right),
\\
 \xi_l^- =&  f^l(1^-)  = f^l(0) - \lambda^{l-1}\mu^{\lf l\rho\rf} ( \delta - \mu(\lambda + \delta -1)).
\end{aligned}
$$
Moreover, the set equalities 
$$
C := \bigcap_{n\ge1}f^n(I)=  \phi(I) = I \setminus\bigcup_{[0,1]\ge  1}[\xi_l^-,\xi_l)
\quad\hbox{\rm and} \quad
\overline{C} =\overline{ \phi(I)} = [0,1] \setminus\bigcup_{l\ge  1}(\xi_l^-,\xi_l)
$$
 hold. The set $\overline{C}$ is topologically homeomorphic to a Cantor set. 
\end{proposition}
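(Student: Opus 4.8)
Throughout, the standing hypotheses are those of the irrational case: $\delta=\delta(\lambda,\mu,\rho)$, so that Lemma 3.1 gives $\phi(0)=0$ and $\phi(1-\rho)=\eta$, while the corollary of Lemma 2.2 gives that $\phi$ is strictly increasing and right continuous on $\R$ with $\phi(y+1)=\phi(y)+1$; since none of $0$, $1$, $1-\rho$ is of the form $l\rho+m$ with $l\ge 1$, $m\in\Z$ (irrationality of $\rho$), $\phi$ is continuous at these three points, so $\phi(1^-)=\phi(1)=1$ and hence $0\le\phi(t)<1$ on $I$, consistent with Lemma 4.2. I would first record the position of the intervals $[\xi_l^-,\xi_l)=[\phi(\{l\rho\}^-),\phi(\{l\rho\}))$. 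Strict monotonicity together with $\phi(0)=0$ and $\phi(1^-)=1$ gives $0<\xi_l^-<\xi_l<1$; distinctness of the $\{l\rho\}$ and strict monotonicity give that $\{[\xi_l^-,\xi_l)\}_{l\ge1}$ is pairwise disjoint. Moreover, since $\phi$ is continuous at $1-\rho$ with $\phi(1-\rho)=\eta$, the dichotomy $\{l\rho\}<1-\rho$ versus $\{l\rho\}>1-\rho$, equivalently $\lfloor(l+1)\rho\rfloor-\lfloor l\rho\rfloor\in\{0,1\}$, splits as follows: if $\lfloor(l+1)\rho\rfloor=\lfloor l\rho\rfloor$ then $\xi_l=\phi(\{l\rho\})<\eta$ and $[\xi_l^-,\xi_l)\subset[0,\eta)$; if $\lfloor(l+1)\rho\rfloor=\lfloor l\rho\rfloor+1$ then $\xi_l^-=\phi(\{l\rho\}^-)>\eta$ and $[\xi_l^-,\xi_l)\subset[\eta,1)$. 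In particular $\xi_l\ne\eta$ and $\xi_l^-\ne\eta$, so $f$ is continuous at both.

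Next I would identify $\xi_l$ and $\xi_l^-$ dynamically. Iterating $F(\phi(y))=\phi(y+\rho)$ from Lemma 4.2 and taking fractional parts, using $\lfloor\phi(y)\rfloor=\lfloor y\rfloor$, gives $f^l(0)=\{\phi(l\rho)\}=\phi(\{l\rho\})=\xi_l$; expanding $\phi(\{l\rho\})$ via its defining series (equivalently, applying the first assertion of Lemma 4.1 to the $F$-orbit $x_k=\phi(k\rho)$ of $x=0$ and re-indexing $k\mapsto n-k$) produces the closed form for $\xi_l$. Taking fractional parts in $F(\phi(t))=\phi(t+\rho)$ gives the conjugacy $f(\phi(t))=\phi(\{t+\rho\})$ on $I$; letting $t\nearrow\{l\rho\}$, and using that $f$ is continuous at $\xi_l^-$ (previous paragraph) and that $\{t+\rho\}\nearrow\{(l+1)\rho\}$ in either branch, yields $f(\xi_l^-)=\xi_{l+1}^-$, while the same functional equation, taken in the left limit at $\rho$, gives $\xi_1^-=\phi(\rho^-)=f(1^-)=\mu(\lambda+\delta-1)$; hence $\xi_l^-=f^l(1^-)$ follows by induction. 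Since $f$ is affine on $[\xi_l^-,\xi_l)$ with slope $\lambda\mu^{\lfloor(l+1)\rho\rfloor-\lfloor l\rho\rfloor}$, its image is $[f(\xi_l^-),f(\xi_l))=[\xi_{l+1}^-,\xi_{l+1})$, and $\xi_{l+1}-\xi_{l+1}^-=\lambda\mu^{\lfloor(l+1)\rho\rfloor-\lfloor l\rho\rfloor}(\xi_l-\xi_l^-)$; telescoping from $\xi_1-\xi_1^-=\delta-\mu(\lambda+\delta-1)$ gives $\xi_l^-=f^l(0)-\lambda^{l-1}\mu^{\lfloor l\rho\rfloor}(\delta-\mu(\lambda+\delta-1))$.

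The decomposition is then an induction on $n$. For $n=1$, a direct computation gives $f(I)=[0,\mu(\lambda+\delta-1))\cup[\delta,1)=I\setminus[\xi_1^-,\xi_1)$. If $f^n(I)=I\setminus\bigcup_{l=1}^n[\xi_l^-,\xi_l)$, then, $f$ being injective, $f(I\setminus A)=f(I)\setminus f(A)$, so the recursion $f([\xi_l^-,\xi_l))=[\xi_{l+1}^-,\xi_{l+1})$ gives $f^{n+1}(I)=f(I)\setminus\bigcup_{l=2}^{n+1}[\xi_l^-,\xi_l)=I\setminus\bigcup_{l=1}^{n+1}[\xi_l^-,\xi_l)$. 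Since $f^{n+1}(I)\subset f^n(I)$, intersecting over $n$ gives $C=I\setminus\bigcup_{l\ge1}[\xi_l^-,\xi_l)$. Independently, a strictly increasing right continuous function on $[0,1)$ with $\phi(0)=0$ and $\sup\phi=\phi(1^-)=1$ has image equal to $[0,1)$ with its half-open jump intervals removed, and these are exactly the $[\xi_l^-,\xi_l)$; hence $\phi(I)=I\setminus\bigcup_{l\ge1}[\xi_l^-,\xi_l)=C$. Passing to closures, using that $0\in C$, $\sup C=1$, and that each $\xi_l^-$ and each $\xi_l$ is a limit of points of $C$ (so the only points of $[0,1]$ outside $\overline{C}$ are the interiors of the gaps), gives $\overline{C}=\overline{\phi(I)}=[0,1]\setminus\bigcup_{l\ge1}(\xi_l^-,\xi_l)$.

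Finally, $\overline{C}$ is nonempty ($0\in\overline{C}$) and compact (closed and bounded). It has no isolated point: an isolated point would force two gaps to abut, $\xi_l=\xi_{l'}^-$, which is impossible since strict monotonicity gives $\xi_l=\phi(\{l\rho\})<\phi(y)\le\phi(\{l'\rho\}^-)=\xi_{l'}^-$ for $y$ strictly between $\{l\rho\}$ and $\{l'\rho\}$, or it would force a gap with $\xi_l^-\le0$ or $\xi_l\ge1$, also excluded. And $\overline{C}$ has empty interior: an open interval contained in $\overline{C}$ would be disjoint from all gaps, hence contained in $\phi(I)$, so $\phi^{-1}$ would map it onto a nondegenerate interval on which $\phi$ is continuous, contradicting the density in $[0,1]$ of the jump set $\{\{l\rho\}:l\ge1\}$ (again by irrationality of $\rho$). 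A nonempty compact perfect subset of $\R$ with empty interior is homeomorphic to the Cantor set, which completes the proof. The crux is the content of the first two paragraphs — pinning down $\xi_l^-$ and $\xi_l$ relative to $\eta$, and controlling the one-sided limit of the conjugacy at the jump points $\{l\rho\}$ — because $F$ is only right continuous, with jumps at the integers, while $f$ is discontinuous at $\eta$; once these are settled, the remainder is bookkeeping on Lemmas 3.1, 4.1 and 4.2.
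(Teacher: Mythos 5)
Your proposal is correct and follows the paper's overall route: Lemma 4.2 identifies $\xi_l=f^l(0)$ and propagates the gaps, Lemma 4.1 gives the closed formula, induction on the holes $[\xi_l^-,\xi_l)$ gives the decomposition of $f^n(I)$, and the monotone, right-continuous structure of $\phi$ identifies $\phi(I)$ with $I$ minus the jump gaps, exactly as in Section 6. Two local steps are genuinely different. For $\xi_l^-$ the paper evaluates $\phi(l\rho^-)$ directly from the series of Lemma 4.1 (using $\{0^-\}=1$, $\lf 0^-\rf=-1$), whereas you obtain the same formula by telescoping the slopes $\lambda\mu^{\lf (l+1)\rho\rf-\lf l\rho\rf}$ from $\xi_1-\xi_1^-=\delta-\mu(\lambda+\delta-1)$; this is slicker, and your preliminary dichotomy placing each gap inside a single branch of $f$ carries the same content as the paper's observation that $\eta=\phi(1-\rho)\in\phi(I)$ lies in no gap. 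For the Cantor property, the paper shows that the gap lengths sum to $1$, so that $\overline C$ is a Lebesgue null set and hence has empty interior, while you deduce empty interior from the density of the jump points $\{l\rho\}$; both are valid, but the paper's computation yields the additional information that $\overline C$ is a null set. Finally, like the paper you lean on the assertion made at the start of Section 6 that $\phi$ jumps exactly at the points $l\rho+\Z$, $l\ge 1$: this is what gives $\phi(1^-)=1$ (equivalently $\phi(0^-)=0$), a fact that genuinely uses $\delta=\delta(\lambda,\mu,\rho)$ and not merely the irrationality of $\rho$, and which the paper itself uses implicitly in writing $\xi_1^-=\phi(\rho^-)=F(0^-)$; so this is not a gap relative to the paper, but it deserves to be flagged as an input rather than a formal consequence of monotonicity.
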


\begin{proof}
Lemma 4.2 shows that for any $ k\ge 0$ and any $l\ge 1$, we have the equalities
$$
\begin{aligned}
F^k(\xi_l) = \phi(\{ l \rho\}+ k \rho) = &\phi( (k+l)\rho -\lf l\rho\rf) = \phi( \{ (k+l)\rho\} +\lf (k+l)\rho\rf - \lf l\rho\rf) 
\\
=&  \xi_{k+l} + \lf (k+l)\rho\rf -\lf l\rho\rf =  \xi_{k+l} + \lf \{ l\rho\} + k\rho\rf
\end{aligned}
$$
and
$$
\begin{aligned}
F^k(\xi_l^-) = \phi(\{ l \rho\}^-+ k \rho) = &\phi( ((k+l)\rho)^-  - \lf l\rho\rf) = \phi( \{ (k+l)\rho\}^- +\lf (k+l)\rho\rf - \lf l\rho\rf) 
\\
=&  \xi_{k+l}^- + \lf (k+l)\rho\rf -\lf l\rho\rf = \xi_{k+l}^- + \lf \{ l\rho\} + k\rho\rf,
\end{aligned}
$$
where $F^k$ stands for the $k$-th iterate of $F$.
Since $F$ is increasing and continuous on $\R \setminus\Z$, it follows  that
\begin{equation}
F^k([\xi_l^-,\xi_l)) = [\xi_{k+l}^-,\xi_{k+l})+ \lf \{ l\rho\} + k\rho\rf , \label{eq9}
\end{equation}
so that any number  $z\in F^k([\xi_l^-,\xi_l)) $ has  integer part $\lf z\rf =  \lf \{ l\rho\} + k\rho\rf$.

We first show that
$$
\phi(I)= I \setminus\bigcup_{l\ge  1}[\xi_l^-,\xi_l).
$$
Since $\phi$ is right continuous and increasing, no point of $\phi(I)$  is located in an interval of the form $[\xi_l^-,\xi_l), l \ge 1$.  Thus, we have the inclusion
$
\phi(I) \subseteq I \setminus\bigcup_{l\ge  1}[\xi_l^-,\xi_l).
$
The reversed inclusion $
\phi(I) \supseteq I \setminus\bigcup_{l\ge  1}[\xi_l^-,\xi_l)
$
follows straightforwardly from the right continuity of $\phi$. Indeed,
let $x\in I$ which is located outside the intervals $[\xi_l^-,\xi_l), l \ge 1$. For every $n\ge 1$, define an  index $l_n$ among the integers $1\le l \le n$ for which  $x \le\xi_l$ and $\xi_{l_n}$ is the closest to $x$. It is readily seen that the decreasing  sequence $\{l_n\rho\}$ converges to a number $y$ and that $x= \phi(y)$ by right continuity of $\phi$.

We know by Lemma 3.1  that the critical point $\eta= \phi(1-\rho)$ is located in the image $\phi(I)$. 
In particular, this critical point $\eta$ does not belong to any interval $[\xi_l^-,\xi_l), l \ge 1$. The function $f$ is thus continuous on each interval $ [ \xi_l^-,\xi_l)$, so  that we deduce from \eqref{eq9} that 
\begin{equation}
 f([\xi_l^-,\xi_l)) = [\xi_{l+1}^-,\xi_{l+1})
 \label{eq10}
 \end{equation}
 by reducing modulo 1.
Now, Lemmae 3.1 and 4.2  yield the equalities
$$
\xi_1 = \phi(\rho) = F(\phi(0)) = F(0) = \delta \quad\hbox{\rm and} \quad \xi_1^- =\phi(\rho^-) = F(0^-) =  \mu(\lambda+ \delta-1).
$$
Looking at Figure 1, we immediately observe that  $ f(I) = I \setminus [\xi_1^-,\xi_1)$, as announced.  Taking now the image by $f$ and using \eqref{eq10}, we find  
$$
f^2(I) = f(I) \setminus f([\xi_1^-,\xi_1))=  I \setminus ( [\xi_1^-,\xi_1)\cup [\xi_2^-,\xi_2)).
$$
Arguing  by induction on $n\ge 1$, we thus deduce from \eqref{eq10} the required equality  
$$
f^n(I)=  I \setminus\bigcup_{l= 1}^n[\xi_l^-,\xi_l).
$$
Letting $n$ tend to infinity, we finally obtain that 
$$
 \bigcap_{n\ge1}f^n(I)= I \setminus\bigcup_{l\ge  1}[\xi_l^-,\xi_l) = \phi(I).
$$

It remains to establish  the explicit formulae giving $\xi_l$ and $\xi_l^-$. Lemma 4.1 delivers the expression
$$
\begin{aligned}
\phi(l\rho) =& \lf l\rho\rf + \lambda^l\mu^{\lf l \rho\rf}\left(\{0 \}- { 1-\delta \over \lambda} \right)
+  { 1-\delta \over \lambda} 
\\
& \qquad \qquad + \sum_{k=0}^{l-1} \lambda^k \mu^{ \lf l\rho\rf -\lf (l-k)\rho\rf} \left( {\lambda +\delta -1\over \lambda} +\lf (l-k-1)\rho\rf -\lf (l-k)\rho\rf\right)
\\
= &  \lf l\rho\rf +
{ 1-\delta \over \lambda} (1- \lambda^l\mu^{\lf l \rho\rf}) + \sum_{k=1}^{l} \lambda^{l-k }\mu^{ \lf l\rho\rf -\lf k\rho\rf} \left( {\lambda +\delta -1\over \lambda} +\lf (k-1)\rho\rf -\lf k\rho\rf\right)
\\
= &  \lf l\rho\rf +
{ 1-\delta \over \lambda} + \sum_{k=0}^{l} \lambda^{l-k }\mu^{ \lf l\rho\rf -\lf k\rho\rf} \left( {\lambda +\delta -1\over \lambda} +\lf (k-1)\rho\rf -\lf k\rho\rf\right),
\end{aligned}
$$
since $\lf \phi(k \rho)\rf= \lf k \rho\rf$ for every integer $k$. Taking the fractional part, we obtain the formula
$$
\xi_l = \phi(\{l \rho\}) = \{ \phi(l\rho)\}= {1-\delta\over \lambda}+  \sum_{k=0}^l \lambda^{l-k}\mu^{\lf l \rho \rf  - \lf k\rho\rf}\left( {\lambda+\delta -1\over \lambda} + \lf (k-1)\rho \rf - \lf k\rho\rf \right).
$$
The equality $\xi_l = f^l(0)$ immediately follows from Lemmae 3.1 and 4.2.

Similarly $f^l(1^-) = \xi_l^- = \{ \phi(l \rho^-)\}$. We have $\{ 0^-\} = 1, \lf 0^-\rf = -1$ and $\lf k \rho^-\rf
= \lf k \rho\rf$ for any integer $k\ge 1$. Then, Lemma 4.1 gives
$$
\begin{aligned}
\phi(l\rho^-) =& \lf l\rho\rf +  \lambda^l\mu^{\lf l \rho\rf + 1}\left( 1 - {1-\delta\over \lambda}\right) + {1-\delta\over \lambda} + \lambda^{l-1}\mu^{ \lf l\rho\rf} \left( {\lambda+ \delta-1\over \lambda} -1\right)
\\
&\qquad\qquad + \sum_{k=0}^{l-2} \lambda^k \mu^{ \lf l\rho\rf -\lf (l-k)\rho\rf} \left( {\lambda +\delta -1\over \lambda} +\lf (l-k-1)\rho\rf -\lf (l-k)\rho\rf\right)
\\
= &  \phi(l\rho) - \lambda^{l-1}\mu^{\lf l\rho\rf} ( \delta - \mu(\lambda + \delta -1)).
\end{aligned}
$$

As a corollary of the above formulae, let us briefly prove that $\overline{C}$ is a Cantor set.  The Lebesgue measure of 
$\overline{C} = I \setminus\bigcup_{l\ge  1}(\xi_l^-,\xi_l)$ is equal to
$$
1 - \sum_{l\ge 1} \xi_l - \xi_l^- = 1 -\big( \delta - \mu(\lambda + \delta -1)\big)\sum_{l\ge 1} \lambda^{l-1}\mu^{\lf l\rho\rf}.
$$
Using \eqref{eq1} and \eqref{phi0}, we  easily compute the sum
$$
 \sum_{l\ge 1} \lambda^{l-1}\mu^{\lf l\rho\rf} = { 1+ (\mu-1) \sigma\over 1 -\lambda},
 $$
 where $ \sigma = \sigma(\lambda,\mu,\rho)$. On the other hand, we have
 $$
\delta = {(1-\mu)(1+ \mu\sigma )\over 1 + (\mu-1)\sigma} \quad{\rm so \,\, that}\quad  \delta - \mu(\lambda + \delta -1)= {1-\lambda \over 1 + (\mu-1)\sigma}.
$$
Therefore $\overline{C}$ is a null set. Consequently, it has no inner point. Moreover, $\overline{C} = \overline{\phi(I)}$ has no isolated point, since the function $\phi$ is strictly increasing and right continuous. It follows that the compact set $\overline{C}$ is homeomorphic to a Cantor set.

\end{proof}

In order to complete the proof of  the assertion (i)  of   Theorem 3, we now show that for any point $x \in I$, the $\omega$-limit set $\omega(x)$ coincides with $\overline{\phi(I)}$. To that purpose, we consider  the $F$ orbit $x_k:= F^k(x) , k\ge 0$,  of $x$. 
When $x = \phi(y)$ belongs to
$\phi(I)$, Lemma 4.2 shows that $x_k=\phi(y+ k \rho)$ so that $f^k(x) = \{ x_k\} = \phi(\{y+ k \rho\})$. The sequence of fractional parts $(\{ y+ k \rho\})_{k\ge 0}$ is dense in $I$, since $\rho$ is an irrational number. Thus the set $\omega(x)$ of accumulation points of the orbit $(f^k(x))_{k\ge 0}$ is equal to $\overline{\phi(I)}$. 
It remains to deal with  points $x\in I$ not belonging to $\phi(I)$, it means $x \in [\xi_l^-,\xi_l)$ for some $l\ge 1$. Observe first that
$$
\lf x_k \rf = \lf \{ l \rho\}  + k \rho \rf, \, \forall k \ge 0,
$$
since $x_k \in F^k([\xi_l^-, \xi_l))$.
In particular the symbolic sequence $(\lf x_{k+1}\rf - \lf x_k \rf)_{k\ge 0}$ coincides with the sturmian sequence $(\lf y+ (k+1)\rho\rf - \lf y +k \rho\rf)_{k\ge 0}$, where we have set $y = \{l \rho\}$. Then, Lemma 4.1, with $l = 0$ and $n=k$,  provides us the  formula
$$
\begin{aligned}
x_{k} = \lfloor y+ k\rho \rfloor  & + \lambda^k\mu^{\lfloor y + k\rho \rfloor   } \left( x  - {1-\delta\over \lambda}\right)  +  {1-\delta\over \lambda} +
\\
& \sum_{j=0}^{k-1} \lambda^j\mu^{\lfloor y+k\rho\rfloor   -\lfloor y+k\rho -j \rho\rfloor  }\left( {\lambda+ \delta -1\over \lambda} + \lfloor y+k\rho -(j+1)\rho \rfloor  -\lfloor y+k\rho -j \rho\rfloor  \right)
\\
= \phi(y+ k &\rho) +  \lambda^k\mu^{\lf y+k\rho\rf} (x-\xi_l).
\end{aligned}
$$
Thus, the $F$-orbit $(x_k)_k$ converges exponentially fast to the $F$-orbit $(\phi(y+ k\rho))_{k\ge0}$
as $k$ tends to infinity. Reducing modulo 1, one obtains as well that $\omega(x)= \overline{\phi(I)}$.

 \section{Rational rotation number}

We prove the statement (ii) of Theorem 3 and add a dynamical description  of the iterated images of $f= f_{\lambda,\mu,\delta}$. We assume throughout this section that
the inequalities \eqref{eq12} are fulfilled for some rational number $p/q$.
Then, Theorem 1 asserts that the rotation number of $f$ equals $p/q$. Put $\phi= \phi_{\lambda,\mu,\delta,p/q}$ and set 
$$
\zeta_{m}=\phi  \left( { m \over q } \right) , \quad \mbox{for} \quad 0\le m\le q-1.
 $$
 It is convenient to extend the sequence $(\zeta_m)_{0\le m \le q-1}$ by $q$-periodicity setting  $\zeta_m=\zeta_r$ for any integer $m\in \Z$, where 
$r$ is the remainder in the euclidean division of $m$ by $q$. Then, Lemma 4.2 yields the formula
$$
f(\zeta_m) = \zeta_{m+p}, \quad \forall m\in \Z.
$$
As $\phi$ is nondecreasing $\zeta_0\le  \cdots \le\zeta_{q-1}$, we claim that these numbers are  distinct. If not, there exists $m$ such that $\zeta_m=\zeta_{m+1}$. Iterating $f$ thus $\zeta_{m+kp}=\zeta_{m+kp+1}$, $\forall k\in \Z$, obtaining 
that the function $\phi$ is constant, in contradiction for instance with Lemma 3.2. Moreover, $\zeta_{q-1} = \phi(1^-) = \phi(0^-)+1 <1$,
 again   by Lemma 3.2. So, we have  an  increasing  sequence
 $$
  0 \le \zeta_0< \ldots < \zeta_{q-1}<1
 $$
in $I$. It follows that the set $\phi(I) = \{ \zeta_0, \dots , \zeta_{q-1}\}$ is an $f$-cycle of order $q$, on which $f$ acts by the substitution  $m\mapsto m+p$  modulo $q$.

  Recall that  $f(\eta)=0$ and $f(\eta^-)=1$, where $ \eta= (1-\delta)/ \lambda$ is the critical point of the map $f$.  Moreover, Lemma 3.2 shows that $\zeta_{q-p-1}<\eta \le  \zeta_{q-p}$.  
 If $\eta$ does not belong to $\phi(I)$, we have strict inequalities $\zeta_{q-p-1}<\eta < \zeta_{q-p}$ and $f(\zeta_{q-p})= \zeta_0>0$.  Otherwise $\eta = \zeta_{q-p}$ and $\zeta_0=f(\eta)=0$. The latter case occurs only 
 when $ \delta$ coincides with the left end point of the interval \eqref{eq12}. Indeed  \eqref{eq7} shows that $\phi(0) $ vanishes if and only if 
 $\delta = \delta\left(\lambda,\mu,(p/ q)^-\right)$.
 
 \marginsize{2.5cm}{2.5cm}{1cm}{2cm}
 \begin{figure}[ht] 
 \begin{tikzpicture}[scale = 1.1]
\draw[thick]  (0,0) -- (6,0);
\draw[thick]  (8,0) -- (14,0);

\draw  (0,0) node[above ] {$0$} node{$\bullet$};
\draw  (1,0) node[above] {$\zeta_{0}$} node{$\bullet$};
\draw  (2,0) node[above ] {$\zeta_{1}$} node{$\bullet$};
\draw  (3,0) node[above ] {$\zeta_{q-p-1}$} node{$\bullet$};
\draw  (3.5,0) node[below ] {$\eta$} node{$\bullet$};
\draw  (4,0) node[above ] {$\zeta_{q-p}$} node{$\bullet$};
\draw  (5,0) node[above ] {$\zeta_{q-1}$} node{$\bullet$};
\draw  (6,0) node[above] {1} (6,0) circle (0.08);

\draw  (8,0) node[above  ] {$\zeta_0=0$} node{$\bullet$};
\draw  (9.5,0) node[above] {$\zeta_{1}$} node{$\bullet$};
\draw  (10.5,0) node[above ] {$\zeta_{2}$} node{$\bullet$};
\draw  (11.5,0) node[above ] {$\cdots$};
\draw  (13,0) node[above ] {$\zeta_{q-1}$} node{$\bullet$};
\draw  (14,0) node[above] {1} (14,0) circle (0.08);

\end{tikzpicture}
\caption{Case $\zeta_0>0$ and Case $\zeta_0=0$.}
\end{figure}
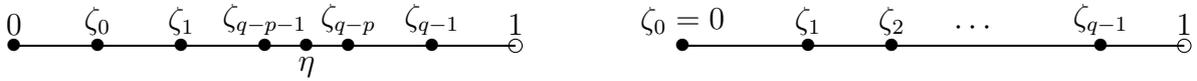

 The next proposition provides a partition of the image $f^n(I)$, $n\ge 1$,  into disjoint intervals. It is convenient to consider circular intervals (or circle arcs identifying $I$ with $\R/\Z$). For any $a , b $ both belonging  to $I$, we set
 $$
 [a,b) = 
 \begin{cases}
{ \rm the  \, usual \,  interval } \,\,     [a, b) & {\rm if} \,  a < b
 \\
  [0,b) \cup [a, 1) & {\rm  if} \,  a  > b.
 \end{cases}
 $$
 We  write for instance $[\zeta_{q-1},\zeta_0) = [0,\zeta_0)\cup [\zeta_{q-1},1)$.
 
 \begin{proposition}
 For any integer $l\ge 1$, the circular interval $\big[f^l(1^-), f^l(0)\big)$ is contained in $\big[\zeta_{l p-1}, \zeta_{l p}\big)= \big[f^l(\zeta_{q-1}),f^l(\zeta_0)\big)$.  
 We have $f^l(1^-) <  f^l(0)$ when $l$ is not divisible by $q$ and $f^l(1^-) > f^l(0)$ when $l$ is a multiple of $q$. 
 The decomposition into disjoint intervals
\begin{equation}
f^n(I)=  I \setminus\bigcup_{l= 1}^n\big[f^l(1^-), f^l(0)\big), \quad{\rm when } \quad 0 \le n \le q-1,
\label{eq13}
\end{equation}
and
\begin{equation}
f^n(I)=  I \setminus\bigcup_{l= n-q+1}^n\big[f^l(1^-), f^l(0)\big), \quad{\rm when } \quad  n \ge q,
\label{eq14}
\end{equation}
holds true. Moreover $f^n(I)$ has Lebesgue measure $\le (\lambda^q\mu^p)^{\lf n/q\rf}$.

\end{proposition}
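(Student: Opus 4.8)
The plan is to track the single gap $H_1:=[f(1^-),f(0))=[\mu(\lambda+\delta-1),\delta)$ together with all of its forward images, the combinatorics being organised by the fact that $f$ cyclically permutes the $q$ circular cycle-gaps $J_m:=[\zeta_m,\zeta_{m+1})$ according to $m\mapsto m+p\pmod q$. I would begin with the base case $n=1$, read off from Figure~1: since $\delta>1-\lambda$ one has $0<\mu(\lambda+\delta-1)<\delta<1$ by injectivity, so $f(I)=[0,\mu(\lambda+\delta-1))\cup[\delta,1)=I\setminus H_1$, and $H_1$ is a genuine interval ($q\ge2$ does not divide $1$). Moreover $H_1\subseteq[\zeta_{p-1},\zeta_p)=[f(\zeta_{q-1}),f(\zeta_0))$: the cycle relation of Section~7 gives $f(\zeta_{q-1})=\zeta_{p-1}$ and $f(\zeta_0)=\zeta_p$, while Lemma~3.2 together with $\zeta_{q-p-1}<\eta\le\zeta_{q-p}$ gives $0\le\zeta_0<\eta\le\zeta_{q-1}<1$, so $f$ is affine and increasing on $[0,\zeta_0]$ and on $[\zeta_{q-1},1)$, whence $\delta=f(0)\le f(\zeta_0)=\zeta_p$ and $f(\zeta_{q-1})\le f(1^-)=\mu(\lambda+\delta-1)$.

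Since $f$ is injective, $x\notin f^n(I)$ precisely when the backward $f$-orbit of $x$ --- well defined as long as it stays in $f(I)=I\setminus H_1$ --- meets $H_1$ within the first $n$ steps, i.e.\ $x\in f^j(H_1)$ for some $0\le j\le n-1$; hence $I\setminus f^n(I)=\bigcup_{l=1}^{n}A_l$ with $A_l:=f^{l-1}(H_1)$. Everything now reduces to describing the sets $A_l$, for which I would use the permutation picture: $f$ maps $J_m$ bijectively onto $J_{m+p}$ for $m\not\equiv q-1\pmod q$, and maps $J_{q-1}$ --- the gap containing the discontinuity point $0$ --- bijectively onto $J_{p-1}\setminus H_1$ (this is the step that carves out $H_1$). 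Iterating, $f^q$ maps each $J_m$ into itself; being injective with a.e.\ derivative $\lambda^q\mu^p=(\lambda\mu^{p/q})^q<1$ on $J_m$ --- the rotation number $p/q$ forcing every $f$-orbit of length $q$ to visit $[\eta,1)$ exactly $p$ times --- it is, off a single discontinuity point $c_m\in J_m$, affine on two increasing branches of slope $\lambda^q\mu^p$, with a single jump across the hole it removes from $J_m$.

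I would then prove by induction on $l$ that $A_l\subseteq J_{lp-1}=f^l(J_{q-1})$, that the ``outer'' endpoints of $A_l$ are $f^l(1^-)$ and $f^l(0)$, and that $[f^l(1^-),f^l(0))$ is a genuine interval when $q\nmid l$ and wraps around $0$ when $q\mid l$. The first wrap is created at $l=q$ by applying $f$ to $A_{q-1}=H_{q-1}$, which straddles $\eta$; showing that $\eta=f^{q-2}(c_{p-1})\in H_{q-1}$, by using $\lambda^q\mu^p<1$ to locate $c_{p-1}$ strictly between $\mu(\lambda+\delta-1)$ and $\delta$, is the point that demands the most care. For $l>q$ the set $A_l=f^q(A_{l-q})$ lies in the same cycle-gap as the earlier interval $H_{l-q}:=[f^{l-q}(1^-),f^{l-q}(0))$, and because the jump of $f^q$ there is strictly interior to $H_{l-q}$ (again $\lambda^q\mu^p<1$), one gets $H_{l-q}\subseteq[f^l(1^-),f^l(0))$ and $A_l\sqcup H_{l-q}=[f^l(1^-),f^l(0))$: the new interval engulfs the earlier hole. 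Hence $A_l=[f^l(1^-),f^l(0))$ for $1\le l\le q$, while in general $\bigcup_{l'\le l,\ l'\equiv l\,(q)}A_{l'}=[f^l(1^-),f^l(0))$; since the largest index $\le n$ in each residue class mod $q$ lies in $\{n-q+1,\dots,n\}$, this yields $I\setminus f^n(I)=\bigcup_{l=1}^n A_l=\bigcup_{l=1}^n[f^l(1^-),f^l(0))=\bigcup_{l=\max(1,\,n-q+1)}^{n}[f^l(1^-),f^l(0))$, the last $q$ intervals being pairwise disjoint as they lie in the $q$ distinct gaps $J_{lp-1}$; this is \eqref{eq13}--\eqref{eq14}. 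The degenerate value $\delta=\delta(\lambda,\mu,(p/q)^-)$, for which $\zeta_0=0$ and $\eta=\zeta_{q-p}$ so that $0$ and $\eta$ become cycle points, needs only a separate check of the endpoint conventions; alternatively, on each gap the itinerary $(\lfloor x_{k+1}\rfloor-\lfloor x_k\rfloor)_k$ is the periodic rotation sequence of slope $p/q$, so that Lemma~4.1 reduces the whole induction to explicit affine bookkeeping with the parameters.

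For the measure bound the permutation picture already suffices: $f$ sends $J_m$ onto a full-measure subset of $J_{m+p}$, so $f^q$ maps $J_m$ into $J_m$ and, being injective with a.e.\ derivative $\lambda^q\mu^p$ on $J_m$, satisfies $|f^q(B)|=\lambda^q\mu^p|B|$ for every $B\subseteq J_m$; summing over $m$ gives $|f^{q(k+1)}(I)|=\lambda^q\mu^p|f^{qk}(I)|$, hence $|f^{qk}(I)|=(\lambda^q\mu^p)^k$, and since $f^n(I)\subseteq f^{q\lfloor n/q\rfloor}(I)$ we conclude $|f^n(I)|\le(\lambda^q\mu^p)^{\lfloor n/q\rfloor}$. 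The main obstacle overall is the geometric bookkeeping in the induction --- above all, verifying that the jump of $f^q$ falls strictly inside the hole created $q$ steps earlier, so that the intervals $[f^l(1^-),f^l(0))$ telescope in the way the statement requires.
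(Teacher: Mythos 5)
Your proposal is correct and follows essentially the same route as the paper: partition $I$ into the circular cycle-gaps $[\zeta_m,\zeta_{m+1})$, obtain \eqref{eq13} by pushing the single gap $[f(1^-),f(0))$ forward through the permutation $m\mapsto m+p$, and derive \eqref{eq14} and the measure bound from the fact that $f^q$ acts on each gap as an injective piecewise affine contraction of a.e.\ slope $\lambda^q\mu^p$ whose jump lands inside the hole created $q$ steps earlier. The two points you rightly flag as delicate are left as sketches --- that the jump point $c$ of $f^q$ on a gap lies strictly inside the old hole (this does follow from $\lambda^q\mu^p<1$, since the two affine branches of $f^q$ fix the gap endpoints, giving $f^q(c^-)<c<f^q(c)$ while $[f^q(c^-),f^q(c))$ is exactly the old hole), and that every length-$q$ itinerary meets $[\eta,1)$ exactly $p$ times (a consequence of the gap combinatorics, with the visits to the two gaps split by $\eta$ and by $0$ contributing exactly one between them, rather than of the rotation number per se) --- but the paper's own proof treats these steps with comparable brevity.
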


  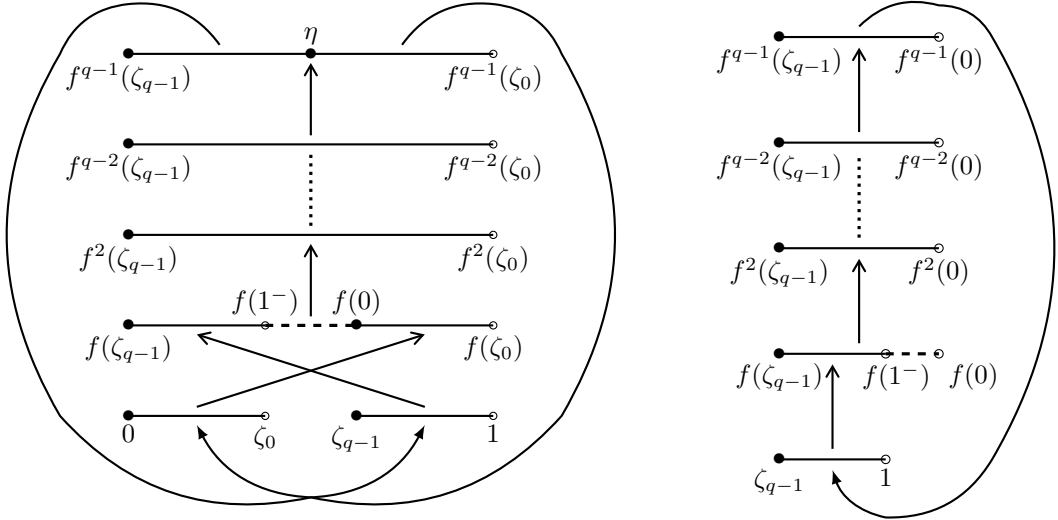
\begin{figure}[ht] 
 
\begin{tikzpicture}[scale = 0.6]
{\footnotesize
\draw[thick]  (0,0) -- (3,0);
\draw[thick]  (5,0) -- (8,0);
\draw[thick]   (0,2) -- (3,2);
\draw[thick]   (4.8,2) -- (8,2);
\draw[dashed][very thick]   (3,2) -- (5,2);
\draw[thick]   (0,4) -- (8,4);

\draw[thick]   (0,6) -- (8,6);
\draw[thick]   (0,8) -- (8,8);

\draw[dotted] [very thick]   (4,4.2) -- (4,5.8);
\draw[thick][->] (1.5,0.2) -- (6.5,1.8);
\draw[thick][->] (6.5,0.2) -- (1.5,1.8);
\draw[thick][->] (4,6.2) -- (4,7.8);
\draw[thick][->] (4,2.2) -- (4,3.8);
\draw  (0,0) node[below ] {$0$} node{$\bullet$};
\draw  (0,2) node[below ] {$f(\zeta_{q-1})$} node{$\bullet$};
\draw  (0,4) node[below ] {$f^2(\zeta_{q-1})$} node{$\bullet$};
\draw  (0,6) node[below ] {$f^{q-2}(\zeta_{q-1})$} node{$\bullet$};
\draw  (0,8) node[below ] {$f^{q-1}(\zeta_{q-1})$} node{$\bullet$};

\draw  (8,0) node[below ] {$1$}  (8,0) circle (0.08) ;
\draw  (8,2) node[below ] {$f(\zeta_{0})$} (8,2) circle (0.08);
\draw  (8,4) node[below ] {$f^2(\zeta_{0})$} (8,4) circle (0.08);
\draw  (8,6) node[below ] {$f^{q-2}(\zeta_{0})$} (8,6) circle (0.08) ;
\draw  (8,8) node[below ] {$f^{q-1}(\zeta_{0})$} (8,8) circle (0.08);
\draw  (4,8) node[above ] {$\eta$} node{$\bullet$};

\draw  (3,0) node[below ] {$\zeta_{0}$} (3,0) circle (0.08);
\draw  (5,0) node[below] {$\zeta_{q-1}$} node{$\bullet$};
\draw  (3,2) node[above  ] {$f(1^-)$} (3,2) circle (0.08) ;
\draw  (5,2) node[above ] {$f(0)$} node{$\bullet$};

\tikzstyle{quadri}=[rectangle,draw,fill=yellow!50,text=blue]
 \tikzstyle{estun}=[->,>=latex,very thick,dotted]

\tikzstyle{estum}=[->,draw, thick,>=latex];
\draw[estum] (2,8.2) to[bend right] (0,9.1) to[bend right] (-1.5,8) to[bend right] (-1.5,0)   to[bend right] (4,-1.8) to[bend right] (6.5,-0.2);

\draw[estum] (6,8.2) to[bend left]    (8,9.1)  to[bend left]    (9.5,8) to[bend left]    (9.5,0)  to[bend left]  (4,-1.8) to[bend left] (1.5,-0.2);
}

\end{tikzpicture}
\qquad
\begin{tikzpicture}[scale = 0.7]
{\footnotesize
\draw[thick]  (0,0) -- (2,0);
\draw[thick]   (0,2) -- (2,2);
\draw[thick]   (0,4) -- (3,4);
\draw[thick]   (0,6) -- (3,6);
\draw[thick]   (0,8) -- (3,8);

\draw[dashed][very thick]   (2,2) -- (3,2);

\draw[dotted] [very thick]   (1.5,4.2) -- (1.5,5.8);

\draw[thick][->] (1,0.2) -- (1,1.8);
\draw[thick][->] (1.5,6.2) -- (1.5,7.8);
\draw[thick][->] (1.5,2.2) -- (1.5,3.8);

\draw  (0,0) node[below ] {$\zeta_{q-1}$} node{$\bullet$};
\draw  (0,2) node[below ] {$f(\zeta_{q-1})$} node{$\bullet$};
\draw  (0,4) node[below ] {$f^2(\zeta_{q-1})$} node{$\bullet$};
\draw  (0,6) node[below ] {$f^{q-2}(\zeta_{q-1})$} node{$\bullet$};
\draw  (0,8) node[below ] {$f^{q-1}(\zeta_{q-1})$} node{$\bullet$};

\draw  (3,2) node[below right] {$f(0)$} (3,2) circle (0.08);
\draw  (3,4) node[below ] {$f^2(0)$} (3,4) circle (0.08);
\draw  (3,6) node[below ] {$f^{q-2}(0)$} (3,6) circle (0.08) ;
\draw  (3,8) node[below ] {$f^{q-1}(0)$} (3,8) circle (0.08);

\draw  (2,0) node[below ] {$1$} (2,0) circle (0.08);
\draw  (2.2,2) node[below ] {$f(1^-)$} (2,2) circle (0.08) ;

\tikzstyle{quadri}=[rectangle,draw,fill=yellow!50,text=blue]
 \tikzstyle{estun}=[->,>=latex,very thick,dotted]

\tikzstyle{estum}=[->,draw, thick,>=latex];

\draw[estum] (1.5,8.2) to[bend left] (3,8.6) to[bend left] (4,8) to[bend left] (4,0)  to[bend left] (2,-1.1) to[bend left] (1,-0.2);

}

\end{tikzpicture}

\caption{Dynamics of the map $f$ with $\zeta_0>0$ on the left and $\zeta_0= 0$ on the right. The arrows indicate the action of $f$ on the intervals.}
\end{figure}

\begin{proof}
Let us consider the partition of $I$
$$
I = [\zeta_{0},\zeta_{1})\cup [\zeta_{1},\zeta_{2})\ldots \cup [\zeta_{q-2},\zeta_{q-1})\cup [\zeta_{q-1}, \zeta_0)
$$
into disjoint circular intervals.  The action of $f$ on these intervals is drawn in Figure 6. 
The inclusions  
$$
f([\zeta_{lp-1}, \zeta_{lp}) \subseteq [\zeta_{(l+1)p-1},\zeta_{(l+1)p})
$$
hold for  $1 \le l \le q$ and we have equality 
$$
f([\zeta_{lp-1}, \zeta_{lp}) = [\zeta_{(l+1)p-1},\zeta_{(l+1)p})
$$
for $1\le l \le q-1$.  However,  for $l=q$, we have a strict inclusion:
$$
 f([\zeta_{q-1}, \zeta_0)) =[ \zeta_{p-1}, f(1^-)) \cup [ f(0), \zeta_p) = [\zeta_{p-1},\zeta_p) \setminus [f(1^-), f(0)).
$$
Thus \eqref{eq13} holds for $n=1$. Applying  $f$ to this decomposition,  we observe the appearance of a second ``hole'' $[f^2(1^-), f^2(0))$ contained in  the  interval $[\zeta_{2p-1},\zeta_{2p})$, just over the first hole $[f(1^-), f(0))$ dotted on Figure 6. Iterating $f$ again, we obtain  \eqref{eq13} for $1 \le n \le  q$ (note that the formulae \eqref{eq13} and \eqref{eq14} coincide for $n=q$). At the  $q$-th iteration, each interval 
$\big[\zeta_{l p-1}, \zeta_{l p}\big), 1 \le  l \le q$, has been holed and $\eta$ does not belong to $f^q(I)$.
Then, $f$ exchanges  the intervals and contracts them. We thus obtain formula \eqref{eq14} for $n \ge q$. 

We finally prove the bound 
$
| f^n(I)| \le (\lambda^q\mu^p)^{\lf n/q\rf}
$ 
for the Lebesgue measure of the iterated images $f^n(I)$. We claim that
\begin{equation}
| f^{n+q}(I) | = \lambda^q \mu^p | f^n(I) | 
\label{eq19}
\end{equation}
for every integer $n\ge 0$. Suppose for instance $n\ge q$ and rewrite \eqref{eq14} in the form
\begin{equation}
 f^{n}(I)= \bigcup_{l=n-q+1}^{n} \big [\zeta_{l p-1},f^{l}(1^-)\big) \cup \big[f^l(0), \zeta_{l p} \big)    .
 \label{eq15}
\end{equation}
Each interval $H=   [\zeta_{l p-1},f^{l}(1^-)$  or $H =[f^l(0), \zeta_{l p} ) $ involved in the disjoint union \eqref{eq15} is contained either in $[0,\eta)$ or in $[\eta,1)$. Keeping track of the iterated images 
$$
f^m(H)=   \big[\zeta_{(l+m) p-1},f^{l+m}(1^-) \big)
\quad{\rm or} \quad
f^m(H) = \big[f^{l+m}(0), \zeta_{(l+m) p}  \big), 
$$
for $1 \le m \le q $ in the decomposition \eqref{eq15}  at  level  $n+m$ (instead of $n$), we observe that  $f^m(H)$ is an interval contained in $[0, \eta)$ for $q-p$ values of $m$ and in $[\eta,1)$ for the $p$ remaining values of $m$ (see Figure 5). 
Notice that the image $f(J)$ of any interval $J$ contained in $[0,\eta)$ (resp. $[\eta,1)$) is an interval whose  lenght equals $\lambda | J|$ (resp. $\lambda\mu | J|$). The length of $f^{m+1}(H)$ equals the length of $f^m(H)$ multiplied either by $ \lambda$ or by $\lambda\mu$,
according whether $f^m(H)$ is contained in $[0,\eta)$ or in $[\eta,1)$. It follows that
$$
| f^{q}(H) |  = \lambda^{q-p} (\lambda\mu)^p | H | =  \lambda^{q} \mu^p | H | .
$$
Summing over the disjoint intervals $H$ occurring in \eqref{eq15}, we obtain \eqref{eq19} for $n\ge q$. The proof for
$n\le q-1$ is similar, now based on the decomposition \eqref{eq13}. Using euclidean division, write
$n= q\lf n/q \rf + r$, with $0 \le r <q$. Equation \eqref{eq19} yields the required bound
$$
 | f^n(I) | = (\lambda^q\mu^p)^{\lf n/q\rf} | f^r(I) | \le  (\lambda^q\mu^p)^{\lf n/q\rf}.
$$

\end{proof}

We deduce from Proposition 6 the following explicit decomposition of the images $f^n(I)$.
 
  \begin{corollary}
   Let $n\ge q$.
 Denote by $\bar{p}$ the multiplicative inverse of $p$ modulo $q$. For every integer $l$ with $ n-q+1\le l \le n$, let  $m $ be the unique integer in the interval $ n-q+1\le m \le n$ which is congruent to $l + \bar{p}$ modulo $q$. Then, the decomposition into disjoint intervals
 $$
 f^{n}(I)= \bigcup_{l=n-q+1}^{n}  [f^l(0),f^{m}(1^-)).
 $$
 holds true. For every integer $l$ with $n-q+1 \le l \le n$,  the interval $[f^l(0),f^{m}(1^-))$ contains the point $\zeta_{lp}$. 
 \end{corollary}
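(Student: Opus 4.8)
The plan is to obtain the decomposition directly from Proposition 6, by reorganising the complement of the ``holes'' $G_l:=[f^l(1^-),f^l(0))$ into circular intervals, each anchored at one point $\zeta_{lp}$. First I would record, from \eqref{eq14}, that $f^n(I)=I\setminus\bigcup_{l=n-q+1}^{n}G_l$ for $n\ge q$, and, from Proposition 6, that $G_l$ is contained in $J_l:=[\zeta_{lp-1},\zeta_{lp})$. Since the $q$ consecutive integers $l$ with $n-q+1\le l\le n$ form a complete residue system modulo $q$ and $\gcd(p,q)=1$, the integers $lp$ also describe a complete residue system; hence the $J_l$ are precisely the members of the partition $I=\bigsqcup_{j=0}^{q-1}[\zeta_j,\zeta_{j+1})$ into disjoint circular intervals (with $\zeta$-indices read modulo $q$), and the map $l\mapsto J_l$ is a bijection onto these $q$ members, so each hole $G_l$ lies inside exactly one of them.

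Next I would localise $f^n(I)$ on each piece $[\zeta_j,\zeta_{j+1})$. For $0\le j\le q-1$ let $l(j)$ be the unique index in $\{n-q+1,\dots,n\}$ with $l(j)p\equiv j+1\pmod q$, i.e.\ $l(j)\equiv(j+1)\bar{p}$; then $J_{l(j)}=[\zeta_j,\zeta_{j+1})$, and $G_{l(j)}$ is the only hole contained in this piece, whence
$$
f^n(I)\cap[\zeta_j,\zeta_{j+1})=\bigl[\zeta_j,\zeta_{j+1}\bigr)\setminus G_{l(j)}=\bigl[\zeta_j,f^{l(j)}(1^-)\bigr)\ \sqcup\ \bigl[f^{l(j)}(0),\zeta_{j+1}\bigr).
$$
The inclusion $G_{l(j)}\subseteq J_{l(j)}$ gives $\zeta_j\le f^{l(j)}(1^-)$ and $f^{l(j)}(0)\le\zeta_{j+1}$, and the left inequality is in fact \emph{strict}: $\zeta_j=\zeta_{l(j)p-1}$ belongs to the $f$-cycle $\phi(I)$ (established earlier in this section), hence $\phi(I)=f^n(\phi(I))\subseteq f^n(I)$ shows $\zeta_j\in f^n(I)$, so $\zeta_j$ cannot lie in the half-open hole $G_{l(j)}$ whose left endpoint is $f^{l(j)}(1^-)$.

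Finally I would recombine. Summing the previous identity over $j$ and pairing the right piece $[f^{l(j)}(0),\zeta_{j+1})$ at level $j$ with the left piece $[\zeta_{j+1},f^{l(j+1)}(1^-))$ at level $j+1$, and noting that $l(j+1)\equiv(j+2)\bar{p}\equiv l(j)+\bar{p}\pmod q$ is exactly the integer $m$ attached to $l=l(j)$ in the statement, the two pieces merge into the circular interval $[f^l(0),f^m(1^-))$. As $j$ runs over $\{0,\dots,q-1\}$ the index $l=l(j)$ runs bijectively over $\{n-q+1,\dots,n\}$, which yields $f^n(I)=\bigsqcup_{l=n-q+1}^{n}[f^l(0),f^m(1^-))$; and since $l(j)p\equiv j+1$, the endpoint inequalities above give $\zeta_{lp}=\zeta_{j+1}\in[f^l(0),f^m(1^-))$. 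I expect the only delicate point to be the bookkeeping of the circular intervals around the index $0$ — where the hole $G_l$ with $q\mid l$, the interval $[\zeta_{q-1},\zeta_0)$, and the corresponding piece of $f^n(I)$ all wrap across $1$ — together with the strict separation $\zeta_j<f^{l(j)}(1^-)$, for which the inclusion $\phi(I)\subseteq f^n(I)$ is precisely what is needed; everything else is a routine rewriting of Proposition 6.
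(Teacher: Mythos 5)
Your proposal is correct and follows essentially the same route as the paper: starting from the decomposition \eqref{eq15} coming from Proposition 6, using that $m\equiv l+\bar p \pmod q$ gives $\zeta_{mp-1}=\zeta_{lp}$, and merging the two adjacent pieces $[f^l(0),\zeta_{lp})$ and $[\zeta_{mp-1},f^m(1^-))$ for each pair $(l,m)$ under the bijection $l\mapsto m$. The only difference is that you make explicit the strict inequality $\zeta_{lp}<f^m(1^-)$ (via $\phi(I)=f^n(\phi(I))\subseteq f^n(I)$), a point the paper leaves implicit but which is indeed what guarantees that $\zeta_{lp}$ lies in the merged interval.
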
 
 
\begin{proof} Recall the decomposition \eqref{eq15} and
observe that $\zeta_{m p-1} = \zeta_{l p}$ and that 
 $ l   \mapsto   m $ is a bijection  of the set $ \{n-q+1, \ldots , n\} $. Collecting the intervals involved in  \eqref{eq15} by pairs $(l,m)$, we find 
 $$
  f^{n}(I)= \bigcup_{l=n-q+1}^{n} \big [\zeta_{l q},f^{m}(1^-)\big) \cup \big[f^l(0), \zeta_{l q} \big)   = \bigcup_{l=n-q+1}^{n}  [f^l(0),f^{m}(1^-)).
 $$
\end{proof}

 It follows from the corollary that
 \begin{equation}
 C := \bigcap_{n\ge0} f^{n}(I)= \{\zeta_0, \dots, \zeta_{q-1} \} = \phi(I).
 \label{eq16}
 \end{equation}
 Indeed, $ \{\zeta_0, \dots, \zeta_{q-1} \}$ is contained in $f^n(I)$, for every $n \ge 1$. 
 The image $f^n(I)$, for $n\ge q$, equals the union of $q$ intervals whose lengths  shrink to $0$, as $n \to \infty$, noting that Proposition 6 delivers the bound
 $$
 | f^n(I) | \le (\lambda \mu^{p/q})^{q\lf n/q \rf} \le \begin{cases}
\max(\lambda,\lambda\mu)^{q\lf n/q\rf} & {\rm if} \quad \lambda\mu <1
 \\
 (\mu^{p/q-r_{\lambda,\mu}})^{q\lf n/q \rf} & {\rm  if} \quad  \lambda\mu \ge 1,
 \end{cases}
$$
where in both upper bounds, the numbers $\max(\lambda,\lambda\mu)$ and $ \mu^{p/q-r_{\lambda,\mu}}$ are less than $1$. 
 This proves \eqref{eq16}. Since these $q$ disjoint  intervals rotates under the action of $f$, we obtain as well that the the $\omega$-limit  set $\omega(x)$  equals  $C$,  for any $x\in I$. The proof of the assertion (ii) of Theorem 3 is complete.

\section{The right end point}   
 
 We deal  here with the exceptional value $\delta = \delta(\lambda,\mu,p/q)$. Let us first explain the reasons why the arguments expanded in Section 7 do not apply to this value. Indeed, \eqref{eq8}
 shows that $\phi(0^-)= 0$ in this case. Then
 $$
 \zeta_{q-1} = \phi(1^-) = \phi(0^-) + 1 = 1,
 $$
and the set $\{\zeta_0, \dots, \zeta_{q-1}\} $ cannot be of course  an $f$-cycle, since it contains the point $1$ which is outside the set of definition $I$ of the map $f$. We slightly modify  the map $f$ in order that the obstruction no longer holds. 

Put  $J=(0,1]$. As usual, let $\lambda, \mu, \delta$ be three real numbers with
 $$
 0< \lambda  <1,  \,  \mu >0,  \, 1-\lambda < \delta <d_{\lambda,\mu}
 $$
and let $\rho = \rho_{\lambda,\mu, \delta}$ be the rotation number of $f=f_{\lambda,\mu,\delta}$. Recall the associated lift $F= F_{\lambda, \mu ,\delta}$ and the conjugation $\phi = \phi_{\lambda,\mu,\delta,\rho}$. We introduce  three functions
$f^-, F^-$ and $\phi^-$ which are  the left limit of $f, F$ and $\phi$ respectively.

\begin{definition} 
 (i)
  Let    $f^-: J \mapsto J$ be the map defined by 
  $$
f^-(x) = f(x^-) =  \begin{cases} \lambda x + \delta,    \quad  if \quad 0< x \le \eta , 
\\
\mu(\lambda x + \delta -1),     \quad   if  \quad \eta < x \le 1.
\end{cases}
$$
\\
(ii). Let    $F^-: \R \mapsto \R$ be the map defined by 
$$
F^-(x) = F(x^-) = \begin{cases} F(x)     \quad  if \quad x\in \R \setminus \Z , 
\\
F(x) -\delta + \mu(\lambda+ \delta -1)     \quad   if \quad  x \in \Z , 
\end{cases}
$$
  \\
(iii)  Let    $\phi^-: \R \mapsto \R$ be the map defined by 
$$
\begin{aligned}
\phi^-(y) = &  \, \phi(y^-) 
\\
= &
\lc y\rc   - {\lambda + \delta -1 \over \lambda} + \sum_{k\ge 0}\lambda^k\mu^{\lc  y\rc   -\lc y-k\rho\rc  }\left( {\lambda +\delta -1 \over \lambda} + \lc y-(k+1)\rho \rc   - \lc y-k\rho\rc  \right).
\end{aligned}
$$
\end{definition}

The maps  $f$ and $f^-$ share almost the same dynamical behaviour and we  present the analogies,  omitting the proofs which follow the lines of Sections 3 and 4.
The two maps $f$ and $f^-$ coincide on $(0,1)\setminus \{\eta\}$ and differ  at the critical point $\eta = (1-\delta)/\lambda$ where $f(\eta)= 0$ and $f^-(\eta)= 1$. 
Thus,  any $f$-orbit contained in $(0,1)$ does not contain the point $\eta$ and is also an $f^-$-orbit. 
The  function $F^-$ turns out to be a lift for the circle map $f^-$,  identifying now the circle $\R/\Z$ with the interval $J$. It follows that both maps $f$ and $f^-$ have the same rotation number $\rho= \rho_{\lambda,\mu,\delta}$. One can show that when $\rho$ is irrational, or when $\rho = p/q$ is rational and
\begin{equation}
\qquad  \delta\left(\lambda,\mu,(p/ q)^-\right) < \delta \le \delta\left(\lambda,\mu , p/q\right), 
 \label{eq18}
\end{equation}
the functional equation
$
F^-(\phi^-(y) ) = \phi^-(y+ \rho)
$
holds for any $y\in \R$. Moreover, when $\rho$ is irrational, the closure
$
 \overline{ \phi^-(J)} = \overline{\phi(I)} 
$
is the  Cantor set  considered in Theorem 3 (i) and any $f^-$-orbit approaches of this Cantor set. When \eqref{eq18} holds, every $f^-$-orbit approaches cyclically of  the periodic $f^-$-cycle
$$
\phi^-(J) = \{ \phi^-(1/q), \dots , \phi^-(1)\} = \{ \zeta_0, \dots, \zeta_{q-1}\} = \phi(I)
$$
of order $q$.

From now, let us fix  $\delta = \delta(\lambda,\mu,p/q)$ and put $f=f_{\lambda,\mu,\delta}$.  Since $\delta$ belongs to the interval  \eqref{eq18},  $\phi(I)$ is an $f^-$-cycle of order $q$ containing  the point $1$.
Let $\O$ be an $f$-orbit. If $0 \notin \O$, then $\O$ is also an $f^-$-orbit which 
 converges to $\phi(I)$.  It follows  in particular that there  exists no finite $f$-cycle. Indeed, arguing as in Section 7, this finite cycle should be an attractor for any $f$-orbit $\O$, and it would be equal to $\phi(I)$,  which is impossible since $1\in \phi(I)$. Suppose now that $0\in \O$. Then $0$ appears only once in $\O$. If not, $\O$ would contain a finite $f$-cycle. Hence, some tail of $\O$ does not contain $0$ and  $\O$ converges as well to $\phi(I)$. Part (iii) of Theorem 3 is established.

 \noindent
{\bf Acknowledgements.} 
We graciously acknowledge the support of R\'egion Provence-Alpes-C\^ote d'Azur through the project APEX {\it Syst\`emes dynamiques: Probabilit\'es et Approximation Diophantienne} PAD, CEFIPRA through the project No. 5801-B and the program  MATHAMSUD projet No. 38889TM DCS: Dynamics of Cantor systems.

\vskip 5mm

\vskip 5mm

\centerline{\scshape {\rm Michel} Laurent {\rm and } {\rm Arnaldo} Nogueira }

{\footnotesize
 \centerline{Aix Marseille Univ, CNRS, Centrale Marseille, I2M, Marseille, France}
 \centerline{michel-julien.laurent@univ-amu.fr and  arnaldo.nogueira@univ-amu.fr}}

\end{document}